\documentclass[11pt]{article}
\usepackage[english]{babel}
\usepackage[a4paper,scale=0.75,hcentering]{geometry}
\raggedbottom

\addtolength{\footskip}{\baselineskip}
\usepackage[hyperfootnotes=false]{hyperref} 
\usepackage{setspace}
\hypersetup{
	colorlinks = true,
	linkcolor = {blue},
	urlcolor = {red},
	citecolor = {blue}
}
\usepackage{amsmath, amsthm, amssymb, mathrsfs, mathtools, enumitem}
\usepackage[x11names]{xcolor}

\hfuzz=10pt

\allowdisplaybreaks

\newcounter{results}[section] 

\theoremstyle{plain}
\newtheorem{theorem}[results]{Theorem}
\newtheorem{lemma}[results]{Lemma}

\newtheorem{corollary}[results]{Corollary}

\newtheorem*{theorem*}{Theorem}
\newtheorem*{lemma*}{Lemma}
\newtheorem*{proposition*}{Proposition}
\newtheorem*{corollary*}{Corollary}
\newtheorem*{exercise*}{Exercise}
\newtheorem*{fact*}{Fact}
\newtheorem*{problem*}{Problem}
\newtheorem*{conjecture*}{Conjecture}

\theoremstyle{remark}
\newtheorem{remark}[results]{Remark}

\newtheorem*{remark*}{Remark}
\newtheorem*{question*}{Question}

\theoremstyle{definition}
\newtheorem{definition}[results]{Definition}

\newtheorem*{definition*}{Definition}
\newtheorem*{example*}{Example}

\numberwithin{equation}{section}


\newcommand{\N}{\ensuremath{\mathbb N}} 
\newcommand{\R}{\ensuremath{\mathbb R}} 
\providecommand{\C}{}
\renewcommand{\C}{\ensuremath{\mathcal C}} 
\renewcommand{\S}{\ensuremath{\mathbb S}} 

\newcommand{\LL}{\ensuremath{\mathcal L}}
\newcommand{\M}{\ensuremath{\mathcal M}}
%
\makeatletter 
\DeclarePairedDelimiter{\@tmpabs}{\lvert}{\rvert}
\newcommand{\@absstar}[1]{{\@tmpabs*{#1}}}
\newcommand{\@absnostar}[2][]{{\@tmpabs[#1]{#2}}}
\newcommand{\abs}{\@ifstar\@absstar\@absnostar}
\makeatother

\makeatletter 
\DeclarePairedDelimiter{\@tmpnorm}{\lVert}{\rVert}
\newcommand{\@normstar}[1]{{\@tmpnorm*{#1}}}
\newcommand{\@normnostar}[2][]{{\@tmpnorm[#1]{#2}}}
\newcommand{\norm}{\@ifstar\@normstar\@normnostar}
\makeatother








\let\div\undefined
\DeclareMathOperator{\div}{div}

\ifdefined\comma
	\renewcommand{\comma}{\ensuremath{\, \text{, }}}
\else
	\newcommand{\comma}{\ensuremath{\, \text{, }}}
\fi



\definecolor{removed}{RGB}{255, 0, 0}
\definecolor{added}{RGB}{0,192,0}


\usepackage[hyperfootnotes=false]{hyperref} 
\usepackage{setspace}
\hypersetup{
	colorlinks = true,
	linkcolor = {blue},
	urlcolor = {red},
	citecolor = {blue}
}

\usepackage[nameinlink,capitalise,sort]{cleveref} 
\crefname{equation}{}{} 
\crefname{enumi}{}{} 

\setstretch{1} 
\allowdisplaybreaks

\numberwithin{equation}{section}
\setlength{\headheight}{22.43335pt}


\begin{document}

\title{\bf The stability on the Caffarelli-Kohn-Nirenberg and Hardy-type inequalities and beyond \thanks{This work is supported by National Key R\&D Program of China (Grant 2023YFA1010001) and  NSFC(12171265). E-mail addresses: zhou-yx22@mails.tsinghua.edu.cn(Zhou),    zou-wm@mail.tsinghua.edu.cn (Zou)}}

\author{{\bf Yuxuan Zhou, Wenming Zou}\\ {\footnotesize \it  Department of Mathematical Sciences, Tsinghua University, Beijing 100084, China.} }

\date{}



\maketitle

\begin{abstract}
{\small In this paper, we establish several improved Caffarelli-Kohn-Nirenberg and Hardy-type inequalities. Our main results are divided into two parts.

\vskip0.1in
In the first part, we consider the following Caffarelli-Kohn-Nirenberg inequality:
\begin{equation*}
    \left(\int_{\R^n}|x|^{-pa}|\nabla u|^pdx\right)^{\frac{1}{p}}\geq S(p,a,b)\left(\int_{\R^n}|x|^{-qb}|u|^qdx\right)^{\frac{1}{q}},\quad\forall\; u\in D_a^p(\R^n),
\end{equation*}
where $S(p,a,b)$ is the sharp constant and $a,b,p,q$ satisfy the relations:
\begin{equation*}
    1<p<n,\quad 0\leq a<\frac{n-p}{p},\quad a\leq b<a+1,\quad q=\frac{np}{n-p(1+a-b)}.
\end{equation*}
We establish gradient stability of this inequality in both functional and critical settings, and we derive some functional properties of the stability constant. Building on the gradient stability, we also obtain several refined Sobolev-type embeddings involving weak Lebesgue norms for functions supported in general domains.
\vskip0.1in
In the second part, we focus on various classical Hardy-type inequalities, including the standard Hardy inequality, the $L^p$-logarithmic Sobolev inequality with weights, the logarithmic Hardy inequality, the Hardy-Morrey inequality, the Hardy-Sobolev interpolation inequality, and the interpolated Caffarelli-Kohn-Nirenberg inequality. We investigate their weighted versions and derive corresponding extremal functions, refinements, new remaining  terms and stability constants.
\vskip0.1in
An important tool in our arguments is a class of simple yet super useful transformations. These transformations enable us to reduce inequalities with complicated weights to simpler forms and to establish explicit relations between stability constants.
    \vskip0.1in
\noindent{\bf Key words:}  Caffarelli-Kohn-Nirenberg inequality, Hardy-type inequality, extremal function, gradient stability, stability constant, critical point.

\vskip0.1in
\noindent{\bf 2020 Mathematics Subject Classification:} Primary 35B35;  35B33; 35J62; 46E35

}
\end{abstract}
\newpage
\tableofcontents

\section{Introduction}\label{sec1}
\subsection{Overview of this paper}\label{sec1.1}
The main subject of this paper is to establish several improved Caffarelli-Kohn-Nirenberg and Hardy type inequalities. Our main results are organized into two parts. In the first part, we address the gradient stability of the Caffarelli-Kohn-Nirenberg inequality and derive several new inferences. In the second part, we examine various Hardy-type inequalities, establishing their weighted versions and deriving corresponding extremal functions, refinements, new remaining  terms and stability constants.

\vskip0.1in
In Subsection \ref{sec1.2}, we provide a detailed introduction to the history of the Caffarelli-Kohn-Nirenberg inequality. We survey previous works, state our main theorems, and offer several remarks. The proofs of these theorems are presented in Section \ref{sec2}.

\vskip0.1in
In Subsection \ref{sec1.3}, to enhance the readability of this paper and avoid making the introduction too lengthy, we provide only a brief overview of our results on various Hardy type inequalities, with a more detailed introduction available in Section \ref{sec3}. We divide Section \ref{sec3} into six subsections, each dedicated to a specific type of Hardy  type inequality, where we state our main theorems and present the proofs. Except for the proofs, Section \ref{sec3} is self-contained, allowing interested readers to engage with it directly.

\vskip0.1in
An important tool we utilize is a class of simple yet effective transformations. A prototype of these transformations is introduced at the beginning of Subsection \ref{sec2.1}, derived from Horiuchi's work \cite{Hor}. Some variants of this transformation are discussed in Section \ref{sec3}. We note that similar tools have been employed in several works (for example, \cite{Don,Lam0}) to establish new inequalities along with corresponding sharp constants and extremizers. In this work, we not only apply these transformations to investigate a broad class of Caffarelli-Kohn-Nirenberg and Hardy-type inequalities but also use them to derive stability results and explicit stability constants.

\subsection{About the Caffarelli-Kohn-Nirenberg inequality}\label{sec1.2}
Here we are concerned with the following Caffarelli-Kohn-Nirenberg inequality proved in \cite{Caf}:
\begin{equation}\label{ckn}
    \left(\int_{\R^n}|x|^{-pa}|\nabla u|^pdx\right)^{\frac{1}{p}}\geq S(p,a,b)\left(\int_{\R^n}|x|^{-qb}|u|^qdx\right)^{\frac{1}{q}},\quad\forall\; u\in D_a^p(\R^n),
\end{equation}
where $S(p,a,b)$ is the sharp constant and $a,b,p,q$ satisfy the relations:
\begin{equation}\label{ckn1}
    1<p<n,\quad 0\leq a<\frac{n-p}{p},\quad a\leq b<a+1,\quad q=\frac{np}{n-p(1+a-b)}.
\end{equation}
The space $D_a^p(\R^n)$ is the completion of $C_c^{\infty}(\R^n)$ with respect to the norm
\begin{equation*}
    \norm*{u}_{D_a^p}:=\left(\int_{\R^n}|x|^{-pa}|\nabla u|^pdx\right)^{\frac{1}{p}}.
\end{equation*}

When $a=b=0$, \eqref{ckn} reduces to the classical Sobolev inequality. It is well-known that Aubin \cite{Aub} and Talenti \cite{Tal} found the explicit sharp constant $S(p,0,0)$ and determined all extremal functions for this case. Later, relying on symmetrization techniques, Lieb \cite{Lie} extended their results to the case $p=2,a=0$. For the case $p=2,(a,b)\neq (0,0)$, the first complete classification result was given by Chou and Chu \cite{Cho}. As for the case $p\neq 2$ and $(a,b)\neq (0,0)$, based on a simple transformation, Horiuchi \cite{Hor} showed that
\begin{equation}\label{ckn2}
\begin{aligned}
     S(p,a,b)=&\;n^{\frac{1}{p}}(p-1)^{\frac{\gamma}{n}-1+\frac{1}{p}}(n-\gamma p)^{\frac{\gamma}{n}-\frac{1}{p}}(n-p-pa)^{1-\frac{\gamma}{n}}\left(\frac{2\pi^{\frac{n}{2}}}{p\gamma}\right)^{\frac{\gamma}{n}}\\
     &\times \left(\frac{\Gamma\left(\frac{n}{\gamma p}\right)\Gamma\left(\frac{n(p-1)}{\gamma p}\right)}{\Gamma\left(\frac{n}{2}\right)\Gamma\left(\frac{n}{\gamma}\right)}\right)^{\frac{\gamma}{n}}
\end{aligned}
\end{equation}
and the equality in \eqref{ckn} can be achieved by radial functions taking the form:
\begin{equation}\label{ckn3}
    \frac{A}{\Big(1+B|x|^{\frac{p\gamma(n-p-pa)}{(p-1)(n-p\gamma)}}\Big)^{\frac{n}{p\gamma}-1}},
\end{equation}
where $\gamma=1+a-b,A\in \R$ and $B>0$. Assuming $a=0$, Ghoussoub and Yuan \cite{Gho} proved that \eqref{ckn3} actually gave all extremal functions for \eqref{ckn}. Later, Caldiroli and Musina \cite{Cal} studied the symmetry-breaking phenomenon and showed all extremal functions are radially symmetric. Subsequently, Musina \cite{Mus} showed that every radially symmetric extermal function must have the form \eqref{ckn3}. Recently, Ciraolo and Corso \cite{Cir} found that, in some special cases, \eqref{ckn3} gives all positive critical points for the corresponding Euler-Lagrange equation. Their proof relies on some delicate analysis on a suitable Riemannian manifold.

Let us present a summarization of all the aforementioned conclusions in the following theorem and give the definition of the manifold $\M_{p,a,b}$ formed by extremal functions:
\begin{theorem}\label{thm1}
    Assume $a,b,p,q$ satisfy the relations in \eqref{ckn1}. Then the equality in \eqref{ckn} holds if and only if $u$ takes the form specified in \eqref{ckn3} (if $a=b=0$, then up to a translation). We denote the manifold formed by extremal functions as $\mathcal{M}_{p,a,b}$:
    \begin{equation*}
        \mathcal{M}_{p,a,b}:=\left\{A\Big(1+B|x|^{\frac{p\gamma(n-p-pa)}{(p-1)(n-p\gamma)}}\Big)^{1-\frac{n}{p\gamma}}\Big|A\in\R,B>0,\gamma=1+a-b\right\},\quad(a,b)\neq (0,0),
    \end{equation*}
    and for the case $(a,b)=(0,0)$:
    \begin{equation*}
        \mathcal{M}_{p,0,0}:=\left\{A\Big(1+B|x-x_0|^{\frac{p}{p-1}}\Big)^{1-\frac{n}{p}}\Big|A\in\R,B>0,x_0\in\R^n\right\}.
    \end{equation*}
\end{theorem}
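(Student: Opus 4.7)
The plan is to prove this theorem as an assembly of results already available in the literature, splitting into cases according to the parameters $(p,a,b)$, and invoking the reference I want to apply in each case. Since the statement is a pure classification of extremals, no new analysis is required; the work is bookkeeping to see that the existing results together cover all of \eqref{ckn1}.

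First I would treat the classical case $a=b=0$. Here the Caffarelli--Kohn--Nirenberg inequality reduces to the sharp $L^p$ Sobolev inequality on $\R^n$, and Aubin \cite{Aub} and Talenti \cite{Tal} prove by rearrangement that equality forces $u$ to be (a translate, dilate, and scalar multiple of) the standard Aubin--Talenti bubble, which is exactly the stated form with $\gamma=1$. This gives the description of $\mathcal M_{p,0,0}$, including the translation freedom that is absent when $(a,b)\neq(0,0)$ because the weights $|x|^{-pa}$ and $|x|^{-qb}$ then break translation invariance.

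Next I would handle the weighted cases with $(a,b)\neq(0,0)$. For $p=2$, the classification of Chou--Chu \cite{Cho} (based on the moving plane method, and extending Lieb's $a=0$ result \cite{Lie}) already classifies every positive critical point of the Euler--Lagrange equation, in particular every extremal; its explicit form matches \eqref{ckn3} with $p=2$. For general $1<p<n$ with $(a,b)\neq(0,0)$, I would proceed in two sub-steps: \emph{(i)} show that every extremal of \eqref{ckn} is radially symmetric (up to sign), and \emph{(ii)} classify radial extremals. Step \emph{(i)} is exactly the content of the symmetry result of Caldiroli--Musina \cite{Cal}, which rules out symmetry breaking throughout the parameter range \eqref{ckn1}. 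Step \emph{(ii)} is the content of Musina \cite{Mus}, which shows that every radial extremal has the explicit form \eqref{ckn3} (with the exponent dictated by the ODE reduction performed already in Horiuchi \cite{Hor}, who in turn produced the family and computed $S(p,a,b)$ as in \eqref{ckn2}). Combining \emph{(i)} and \emph{(ii)} gives the description of $\mathcal M_{p,a,b}$.

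The main obstacle, if one were to redo this rather than quote it, is step \emph{(i)}: proving absence of symmetry breaking for the quasilinear weighted problem with $p\neq 2$. The moving plane method that works for $p=2$ is not directly available because the $p$-Laplacian lacks a clean comparison principle near the singularity $x=0$; Caldiroli--Musina's approach proceeds instead through a careful variational comparison with the radial minimizer and a Schwarz-type rearrangement adapted to the weight. Modulo this ingredient, the rest of the proof is a direct citation chain, and I would finally observe that the translation freedom present only in the $(a,b)=(0,0)$ case is consistent with the definition of the two manifolds $\mathcal M_{p,a,b}$ and $\mathcal M_{p,0,0}$ stated in the theorem.
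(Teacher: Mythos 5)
Your proposal is correct as a citation chain and covers every case, but it takes a genuinely different route from the paper's own proof. You decompose the $(a,b)\neq(0,0)$ case into two external ingredients: radial symmetry of extremals (Caldiroli--Musina) plus classification among radial extremals (Musina, with the family produced by Horiuchi). The paper instead presents a short, essentially self-contained argument for $a>0$ via the Horiuchi change of variables $\bar u(r\theta):=k^{1/q}u(r^k\theta)$, $k=\tfrac{n-p}{n-p-ap}>1$. In polar coordinates the weighted gradient energy becomes $k^{1-p-p/q}\int_{\S^{n-1}}\int_0^\infty(|\nabla_r\bar u|^2+k^2 r^{-2}|\nabla_\theta\bar u|^2)^{p/2}r^{n-1}\,dr\,d\theta$, which dominates $k^{1-p-p/q}\norm{\nabla\bar u}_{L^p}^p$ because $k>1$, while the constraint norm $\int|x|^{-qb}|u|^q\,dx$ transforms exactly into $\int|x|^{-q(b-a)}|\bar u|^q\,dx$. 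Equality in the intermediate inequality forces $\nabla_\theta\bar u\equiv 0$, so radial symmetry of extremals comes for free (this substitutes for the Caldiroli--Musina symmetry theorem), and the problem reduces to the $a=0$ case, for which Ghoussoub--Yuan already supplies the full classification. Your route is a sound bookkeeping of deeper results; the paper's buys an elementary replacement for the symmetry step and makes explicit the reduction of the $a>0$ classification to knowing only the radial classification at $a=0$.
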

For the reader's convenience, in Subsection \ref{sec2.1}, we present a straightforward proof for the case $a>0$ to highlight a simple yet clever tranformation (refer to the beginning of Subsection \ref{sec2.1}). This transformation, along with some of its variants, proves to be useful for our main results in this paper. Additionally, we observe an interesting phenomenon indicated by this proof: if the classification of radial extremizers for the case $a=0$ is established, we can then characterize all extremizers in the case $a>0$. Similar phenomena also appear in Section \ref{sec3}.

\vskip0.1in
Our first main result concerns the gradient stability for \eqref{ckn}. The question on the stability of functional inequalities was initially raised by Br\'ezis and Lieb \cite{Bre}. They obtained a sharpened Sobolev embedding on bounded domains and posed the question of whether similar results hold for homogeneous Sobolev inequalities in $\R^n$. This question was answered affirmatively by Bianchi and Egnell \cite{Bia} in the case $p=2$. They showed that for any function $u\in D_0^2(\R^n)$, the gap in the Sobolev inequality can control the $D_0^2$-distance between $u$ and an extremizer. Subsequently, their results were extended to high-order and fractional-order Hardy-Sobolev inequalities. Related works can be found in \cite{Bar,Che,Lu,Rad}. Assuming $p=2$, Wang and Willem \cite{Wan} were the first to establish gradient stability in the full parameter domain \eqref{ckn1}. For the case $a<0,p=2$, we refer to \cite{Fra,Wei} for some related works.

 \vskip0.1in

 For the case $p\neq 2$, due to the absence of the Hilbertian structure, more delicate arguments are required to handle the stability. We refer to \cite{Cia,Fig,Neu} and references therein for various previous works and partial results. The first sharp stability result was given by Figalli and Zhang \cite{Fig1}. Relying on refined spectral estimates and compactness arguments, they proved that, for any $1<p<n$, there exists a positive constant $K$ such that
\begin{equation}\label{asdf}
    \frac{\norm*{\nabla u}_{L^p}}{\norm*{u}_{L^{\frac{np}{n-p}}}}-S(p,0,0)\geq K\inf_{v\in\mathcal{M}_{p,0,0}}\left(\frac{\norm*{\nabla u-\nabla v}_{L^p}}{\norm*{\nabla u}_{L^p}}\right)^{\max\{2,p\}},\quad \forall\; u\in D_0^p(\R^n)\backslash \{0\}.
\end{equation}
The exponent $\max\{2,p\}$ is optimal. Later, by adapting the methods developed in \cite{Fig1}, Deng and Tian \cite{Den,Den1} extended \eqref{asdf} to the weighted inequality \eqref{ckn}. In \cite{Den}, they established stability estimates across the full parameter range, under the assumption that $u$ is radially symmetric. In a subsequent work \cite{Den1}, they focused on the special case $a=0$ and proved the following estimate:
\begin{equation}\label{DT}
    \frac{\norm*{\nabla u}_{L^p}}{\norm*{|x|^{-b}u}_{L^q}}-S(p,0,b)\geq K\inf_{v\in\mathcal{M}_{p,0,b}}\left(\frac{\norm*{\nabla u-\nabla v}_{L^p}}{\norm*{\nabla u}_{L^p}}\right)^{\max\{2,p\}},\quad \forall\; u\in D_0^p(\R^n),
\end{equation}
where $K$ is a universal positive constant, and the exponent $\max\{2,p\}$ is optimal.

\vskip0.1in
In this paper, our first main result is to establish gradient stability for \eqref{ckn} in the full parameter region \eqref{ckn1}. Specifically, we prove:
\begin{theorem}\label{thm2}
    Assume $a,b,p,q$ satisfy the relations in \eqref{ckn1} and $0<a<b$. Then there exists a positive constant $K$ such that for any nontrivial function $u\in D_a^p(\R^n)$, we have
    \begin{equation}\label{sta1}
         \frac{\norm*{|x|^{-a}\nabla u}_{L^p}}{\norm*{|x|^{-b}u}_{L^q}}-S(p,a,b)\geq K\inf_{v\in\mathcal{M}_{p,a,b}}\left(\frac{\norm*{|x|^{-a}(\nabla u-\nabla v)}_{L^p}}{\norm*{|x|^{-a}\nabla u}_{L^p}}\right)^{\max\{2,p\}}.
    \end{equation}
    Recall that $\mathcal{M}_{p,a,b}$ denotes the extremal manifold. Moreover, the exponent $\max\{2,p\}$ is optimal.
\end{theorem}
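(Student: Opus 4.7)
The plan is to reduce the stability inequality \eqref{sta1} for the parameter range $0<a<b$ to the Deng--Tian stability estimate \eqref{DT}, which already covers the case $a=0$. The bridge will be a Horiuchi-type radial change of variables that, for functions on $\R^n$ in the CKN regime \eqref{ckn1} with $0<a<b$, maps the weighted gradient norm $\int|x|^{-pa}|\nabla u|^p\,dx$ to an \emph{unweighted} gradient-type norm in a (possibly fractional) effective dimension, while simultaneously converting the weight $|x|^{-qb}$ in the $L^q$-term into a weight with a new exponent $b'\in[0,1)$. Once this equivalence up to fixed constants is established, the CKN deficit of $u$ will control the CKN deficit of the transformed function $\tilde u$, so that applying \eqref{DT} to $\tilde u$ and pulling the resulting bound back through the change of variables will yield \eqref{sta1}.

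More concretely, I would introduce the map $\Phi\colon u\mapsto \tilde u$, given on radial functions by $\tilde u(s)=u(s^{1/\alpha})$, with $\alpha=\alpha(n,p,a,b)>0$ chosen so that after the substitution $r=s^{1/\alpha}$ the radial measure $r^{n-1-pa}\,dr$ becomes proportional to $s^{m-1}\,ds$ for a suitable effective dimension $m=m(n,p,a,b)$. A direct calculation in spherical coordinates shows that $\Phi$ identifies $\norm{|x|^{-a}\nabla u}_{L^p(\R^n)}^p$ with a constant multiple of $\norm{\nabla\tilde u}_{L^p(\R^m)}^p$, and $\norm{|x|^{-b}u}_{L^q(\R^n)}^q$ with a constant multiple of $\norm{|y|^{-b'}\tilde u}_{L^q(\R^m)}^q$ for a computable $b'\in[0,1)$; one also verifies that $\Phi$ carries the extremal manifold $\mathcal{M}_{p,a,b}$ bijectively onto $\mathcal{M}_{p,0,b'}$, so that the deficits on the two sides of the transformation correspond. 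For non-radial $u$, I would combine $\Phi$ with a spherical-harmonic-type decomposition $u=u_{\mathrm{rad}}+u^{\perp}$: invoking \eqref{DT} on $\Phi u_{\mathrm{rad}}$ controls the radial deviation from $\mathcal{M}_{p,a,b}$, while the non-radial component $u^{\perp}$ is handled via a spectral-gap estimate around the radial extremizer (in the spirit of Wang--Willem for $p=2$), together with the Figalli--Zhang local-to-global compactness machinery, which upgrades a local Taylor-expansion lower bound into a global bound with the optimal exponent $\max\{2,p\}$.

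The main obstacle I anticipate is coordinating the purely radial transformation $\Phi$ with the angular modes of $u$: the substitution $y=|x|^{\alpha-1}x$ rescales radial and angular derivatives by different factors, so $\Phi$ does not preserve the full gradient norm for non-radial $u$ and produces cross-terms that have to be absorbed. The cleanest remedy I see is a two-stage analysis: apply $\Phi$ to the radial component to inherit the Deng--Tian bound, and control the non-radial component by a direct second-variation analysis at a radial extremizer, which by Theorem \ref{thm1} is uniquely determined up to the two-parameter family indexed by $(A,B)$, so that the linearized operator has a readily computable kernel. Sharpness of the exponent $\max\{2,p\}$ is the easier direction: testing \eqref{sta1} on an explicit perturbation $U+\eps\varphi$ with $U\in\mathcal{M}_{p,a,b}$ and $\varphi$ transversal to the tangent space of $\mathcal{M}_{p,a,b}$ at $U$, a Taylor expansion of the deficit in $\eps$ yields exactly the order $\eps^{\max\{2,p\}}$, which matches the exponent in \eqref{sta1} and confirms that no larger exponent could hold.
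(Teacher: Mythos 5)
You have the right first move --- the Horiuchi-type radial change of variables to reduce $0<a<b$ to the Deng--Tian case $a=0$ is exactly the paper's strategy --- but you then lose the thread in two ways, and the second is a genuine gap.

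First, a small correction of picture: the paper's transformation $\bar u(r\theta):=k^{1/q}u(r^k\theta)$ with $k=\tfrac{n-p}{n-p-ap}>1$ keeps the dimension $n$ fixed. It does not push the problem into a "possibly fractional effective dimension $m$'' (where \eqref{DT} would not even literally apply). Instead, it produces the exact identity
\begin{equation*}
\int_{\R^n}|x|^{-pa}|\nabla u|^p\,dx = k^{1-p-\frac{p}{q}}\int_{\S^{n-1}}\!\!\int_0^{\infty}\bigl(|\nabla_r\bar u|^2+k^2r^{-2}|\nabla_\theta\bar u|^2\bigr)^{\frac{p}{2}}r^{n-1}\,dr\,d\theta,
\end{equation*}
together with $\int|x|^{-qb}|u|^q\,dx=\int|x|^{-q(b-a)}|\bar u|^q\,dx$, and carries $\mathcal M_{p,a,b}$ bijectively onto $\mathcal M_{p,0,b-a}$.

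The genuine gap is in your treatment of the angular modes. You correctly notice that the transformation does not carry the full gradient norm of a non-radial $u$ into the unweighted gradient norm of $\bar u$ --- the angular term picks up the factor $k^2$ --- and you conclude that a spherical-harmonic decomposition and a separate second-variation/spectral-gap argument for $u^{\perp}$ are needed. This is not so, and the fix is much cheaper than what you propose: since $k>1$, the anisotropic integrand in the identity above \emph{dominates} the isotropic one, so one immediately gets
\begin{equation*}
\int_{\R^n}|x|^{-pa}|\nabla u|^p\,dx \ \geq\ k^{1-p-\frac{p}{q}}\int_{\R^n}|\nabla\bar u|^p\,dx,
\end{equation*}
with no loss of generality on $u$. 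Combined with $S(p,a,b)=k^{\frac1p-1-\frac1q}S(p,0,b-a)$, the CKN deficit of $u$ controls the (scaled) Sobolev--CKN deficit of $\bar u$, to which \eqref{DT} applies \emph{for arbitrary $\bar u$}, not just radial ones. For the infimum on the right-hand side of \eqref{DT}, the same anisotropic integrand is also trapped between the isotropic one and $k^p$ times the isotropic one, so after the inverse change of variables one recovers $\inf_{v\in\mathcal M_{p,a,b}}\bigl\Vert|x|^{-a}(\nabla u-\nabla v)\bigr\Vert_{L^p}^p$ up to a fixed constant $k^{-p}$. No spectral decomposition, no second-variation analysis around a radial extremizer, and no ``local-to-global compactness machinery'' is needed; all of those are baked into the Deng--Tian result you are importing, and re-developing them here for the weighted $p$-Laplacian would be a major project you have not sketched credibly. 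Moreover, even if you carried out your two-stage plan, recombining the bounds for $u_{\mathrm{rad}}$ and $u^{\perp}$ into a single deficit inequality with the optimal exponent $\max\{2,p\}$ is not addressed. So: keep the Horiuchi reduction, drop the decomposition, and use the elementary two-sided comparison furnished by $1\leq k^2$.
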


\begin{theorem}\label{thm3}
     Assume $a,b,p,q$ satisfy the relations in \eqref{ckn1} and $0<a=b$. Then there exists a positive constant $K$ such that for any nontrivial function $u\in D_a^p(\R^n)$, we have
    \begin{equation}\label{sta2}
         \frac{\norm*{|x|^{-a}\nabla u}_{L^p}}{\norm*{|x|^{-a}u}_{L^q}}-S(p,a,a)\geq K\inf_{v\in\mathcal{M}_{p,a,a}}\left(\frac{\norm*{|x|^{-a}(\nabla u-\nabla v)}_{L^p}}{\norm*{|x|^{-a}\nabla u}_{L^p}}\right)^{\max\{4,2p\}}.
    \end{equation}
    If we assume that $u$ is centrally symmetric about the origin (i.e., $u(x)=u(-x)$ for any $x\in\R^n$), the exponent $\max\{4,2p\}$ can be replaced by $\max\{2,p\}$.
\end{theorem}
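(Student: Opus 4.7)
The plan is to reduce inequality \eqref{sta2}, via a Horiuchi-type radial change of variable, to an anisotropic Sobolev inequality on the image side, and then invoke the Figalli--Zhang sharp Sobolev stability \cite{Fig1}.

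Setting $\alpha\defeq(n-p-pa)/(n-p)\in(0,1)$ and $T(x)\defeq|x|^{\alpha-1}x$, the substitution $v(y)\defeq u(T^{-1}(y))$ satisfies, in polar coordinates $y=\rho\theta$,
\begin{equation*}
    \int_{\R^n}|x|^{-qa}|u|^q\de x=\alpha^{-1}\int_{\R^n}|v|^q\de y,\qquad
    \int_{\R^n}|x|^{-pa}|\nabla u|^p\de x=\alpha^{-1}I_\alpha(v),
\end{equation*}
where
\begin{equation*}
    I_\alpha(v)\defeq\int_0^\infty\int_{S^{n-1}}\rho^{n-1}\bigl[\alpha^2(\partial_\rho v)^2+\rho^{-2}|\nabla_\theta v|^2\bigr]^{p/2}\de\theta\,\de\rho,
\end{equation*}
and $T$ carries $\mathcal{M}_{p,a,a}$ bijectively onto the centered Aubin--Talenti family $\mathcal{M}^c\defeq\{A(1+B|y|^{p/(p-1)})^{1-n/p}\}\subset\mathcal{M}_{p,0,0}$. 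Consequently the CKN deficit of $u$ is comparable, up to a constant depending only on $\alpha$, to the anisotropic deficit $\tilde\delta(v)\defeq I_\alpha(v)^{1/p}/\|v\|_{L^q}-\alpha S(p,0,0)$. Since $\alpha<1$, the elementary bound $\alpha^2A+B\geq\alpha^2(A+B)$ yields $I_\alpha(v)^{1/p}\geq\alpha\|\nabla v\|_{L^p}$, so $\tilde\delta(v)\geq\alpha\,\delta_{\mathrm{Sob}}(v)$ with $\delta_{\mathrm{Sob}}$ the classical Sobolev deficit of $v$.

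In the $n$-symmetric case, $v$ is $n$-symmetric, so the closest Sobolev extremal provided by Figalli--Zhang is $n$-symmetric (by uniqueness near the extremal manifold and invariance under the symmetries of $v$), hence centered at the origin, hence lies in $\mathcal{M}^c$. This gives $\tilde\delta(v)\gtrsim\mathrm{dist}(v,\mathcal{M}^c)^{\max\{2,p\}}$; after transferring back through $T^{-1}$, one obtains \eqref{sta2} with exponent $\max\{2,p\}$.

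In the general case, the closest Sobolev extremal $w(y)=A(1+B|y-y_0|^{p/(p-1)})^{1-n/p}$ may have $y_0\neq 0$. A Taylor expansion around the centered profile produces $\tilde\delta(w)=c_\alpha|y_0|^2+O(|y_0|^3)$ with $c_\alpha>0$, because $\rho^{-2}|\nabla_\theta w|^2$ is of order $|y_0|^2$ pointwise and the factor $(1-\alpha^2)>0$ appearing in $[\alpha^2(\partial_\rho w)^2+\rho^{-2}|\nabla_\theta w|^2]^{p/2}-\alpha^p|\nabla w|^p$ integrates to a strictly positive constant. Combining this with the Lipschitz bound $|\tilde\delta(v)-\tilde\delta(w)|\lesssim\|\nabla(v-w)\|_{L^p}$ (via Minkowski applied to $I_\alpha$) and the Figalli--Zhang estimate $\|\nabla(v-w)\|_{L^p}\lesssim\tilde\delta(v)^{1/\max\{2,p\}}$, we obtain $|y_0|^2\lesssim\tilde\delta(v)^{1/\max\{2,p\}}$. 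The triangle inequality $\mathrm{dist}(v,\mathcal{M}^c)\lesssim\|\nabla(v-w)\|_{L^p}+|y_0|$ then furnishes \eqref{sta2} with exponent $2\max\{2,p\}=\max\{4,2p\}$.

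The main obstacle is the quantitative expansion $\tilde\delta(w)=c_\alpha|y_0|^2+O(|y_0|^3)$ with explicit positivity of $c_\alpha$. Dilation invariance reduces the computation to a single reference profile, and positivity of $c_\alpha$ then boils down to the nontriviality of the angular integral $\int_{S^{n-1}}(1-\theta_1^2)\,\de\theta$ weighted by the strictly positive factor $(1-\alpha^2)$; care is needed to track the $p/2$-homogeneity uniformly in the Aubin--Talenti parameters $A,B$ when establishing the Lipschitz bound.
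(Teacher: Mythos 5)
Your proposal follows essentially the same route as the paper: the Horiuchi-type change of variable reduces the CKN deficit to an anisotropic Sobolev deficit, the elementary bound $I_\alpha(v)^{1/p}\geq\alpha\|\nabla v\|_{L^p}$ lets you invoke the Figalli--Zhang stability, and the two cases ($n$-symmetric versus general) are handled by controlling the off-center parameter $y_0$. The paper's Lemma~\ref{lem1} and Lemma~\ref{lem2} play the same roles as your Lipschitz bound and your expansion $\tilde\delta(w)\approx c_\alpha|y_0|^2$, merely assembled a bit differently: the paper runs a dichotomy between $\inf_{\mathcal{M}_{p,0,0}}$ and $\big(\inf_{\mathcal{M}^R_{p,0,0}}\big)^2$ and compares the deficit of $\bar u$ to that of the translated profile via the ratio Lemma~\ref{lem1}, whereas you compare them directly through a Lipschitz estimate on $\tilde\delta$ and a triangle inequality. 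Both assemblies produce the exponent $\max\{4,2p\}$.

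The one genuine gap is in the asserted expansion $\tilde\delta(w)=c_\alpha|y_0|^2+O(|y_0|^3)$. This Taylor computation is honest only when $|y_0|$ is small in a scale-invariant normalization (e.g.\ after fixing the dilation parameter $B$), yet you invoke it to conclude $|y_0|^2\lesssim\tilde\delta(w)$. Without first proving that small $\tilde\delta(w)$ forces $|y_0|$ into the small-parameter regime, the deduction is circular: the Lipschitz bound and Figalli--Zhang give you $\tilde\delta(w)$ small, not $|y_0|$ small. The paper's Lemma~\ref{lem2} supplies exactly this missing piece: it proves the \emph{global}, scale-invariant inequality $\tilde\delta(U(\cdot+\rho))\geq c\big(\|\nabla U-\nabla U(\cdot+\rho)\|_{L^p}/\|U\|_{L^{np/(n-p)}}\big)^2$ by combining concentration-compactness (to dispose of the regime where $\rho$ is bounded away from $0$ after normalization) with the local mean-value/Taylor computation (for small $\rho$). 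You need an analogous two-regime argument before the expansion can be used as you intend. A secondary issue: in the $n$-symmetric case, the paper does not invoke uniqueness of the closest Sobolev extremal (which Figalli--Zhang do not assert). Instead it takes \emph{any} closest extremal $v_0(\cdot+x_0)$ and controls $\|\nabla\bar u-\nabla v_0\|_{L^p}$ via the reflection $x_0\mapsto -x_0$ and the triangle inequality, which is more robust than appealing to ``uniqueness near the extremal manifold.''
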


\begin{remark}
    The optimality of the exponent $\max\{2,p\}$ in \eqref{sta1} can be derived by selecting appropriate test functions. We omit the details here, as the arguments are essentially the same as those presented in \cite[Remark 1.5]{Den} and \cite[Remark 1.2]{Fig1}.
\end{remark}
\begin{remark}
    The main idea behind the derivation of the two theorems is to reduce the original inequalities to simpler forms. A key tool in this process is the transformation introduced at the beginning of Subsection~\ref{sec2.1}. When \( 0 < a < b \), the reduction proceeds smoothly. However, in the case \( 0 < a = b \), difficulties arise due to the fact that \( \mathcal{M}_{p,0,0} \) is translation-invariant, whereas \( \mathcal{M}_{p,a,a} \) is not. This discrepancy necessitates careful estimates during the reduction process, which leads to a loss in the exponent. We conjecture that, even when \( 0 < a = b \), the estimate \eqref{sta2} still holds for general functions, provided the exponent \( \max\{4,2p\} \) is replaced by \( \max\{2,p\} \).
\end{remark}

\begin{remark}
    We believe that generalizing the methods developed in \cite{Den,Den1,Fig1} provides another promising approach to establishing quantitative stability for inequality \eqref{ckn} across the full parameter range. In \cite{Den}, Deng and Tian obtained stability results for the entire parameter region, but only under the assumption of radial symmetry. The reason for this restriction is that, in the radially symmetric setting, the derivation of spectral estimates (see \cite[Propositions~3.2 and~3.4]{Den}) is significantly simplified. In \cite{Den1}, by adapting arguments from \cite[Propositions~3.2 and~3.6]{Fig1}, Deng and Tian successfully derived the necessary spectral estimates (see \cite[Propositions~1.2 and~4.2]{Den1}) for general functions in the special case \( a = 0 \), and thus established stability in that case. Therefore, once appropriate spectral estimates are established for the full parameter range, one can reasonably expect to obtain corresponding stability results as well.
\end{remark}

Based on the stability results \eqref{DT}, \eqref{sta1} and \eqref{sta2}, we derive several improved Sobolev-type embeddings involving weak Lebesgue norms over general domains. When restricted to domains with finite Lebesgue measures, we have:
\begin{corollary}\label{ccc1}
    Assume $a,b,p,q$ satisfy the relations in \eqref{ckn1}. Set $p_1=\frac{n(p-1)}{n-p-a}$,  $p_2=\frac{n(p-1)}{n-a-1}$, and $p_3=\frac{n-p-pa}{np(p-1)}$. Let $\Omega\subset\R^n$ be a  domain with $|\Omega|<\infty$. Then there exists a positive constant $\Bar{K}$, depending only on $a,b,p,q,n$, such that for any $u\in D_a^p(\Omega)$, we have
    \begin{equation}\label{ccc2}
        \norm*{|x|^{-a}\nabla u}_{L^p(\Omega)}^\alpha-S(p,a,b)^\alpha\norm*{|x|^{-b}u}^\alpha_{L^q(\Omega)}\geq \Bar{K}|\Omega|^{-\alpha p_3}\norm*{|x|^{-a}u}_{L_w^{p_1}(\Omega)}^\alpha
    \end{equation}
    and
    \begin{equation}\label{ccc3}
        \norm*{|x|^{-a}\nabla u}_{L^p(\Omega)}^\alpha-S(p,a,b)^\alpha\norm*{|x|^{-b}u}^\alpha_{L^q(\Omega)}\geq\Bar{K}|\Omega|^{-\alpha p_3}\norm*{|x|^{-a}\nabla u}_{L_w^{p_2}(\Omega)}^\alpha.
    \end{equation}
    Here, $\alpha$ takes the value $\max\{4,2p\}$ in the case $p\neq 2$ and $0<a=b$, and $\max\{2,p\}$ in other cases. The space $D_a^p(\Omega)$ denotes the closure of $C_c^\infty(\Omega)$ in $D_a^p(\R^n)$. The weak Lebesgue norm is defined by
    \begin{equation*}
        \norm*{f}_{L_w^p(\Omega)}:=\sup_{t>0}\,t\,|\{x\in\Omega,|f(x)|>t\}|^{\frac{1}{p}}.
    \end{equation*}
\end{corollary}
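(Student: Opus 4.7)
\noindent\textit{Plan of the proof.} The argument will combine the gradient stability estimates \eqref{DT}, \eqref{sta1}, \eqref{sta2} with the weak Hardy--Sobolev embedding and H\"older interpolation on weak Lorentz spaces over finite-measure domains. Throughout, set $A=\norm{|x|^{-a}\nabla u}_{L^p}$, $T=\norm{|x|^{-b}u}_{L^q}$, $S=S(p,a,b)$, and $\delta=\inf_{v\in\mathcal{M}_{p,a,b}}\norm{|x|^{-a}(\nabla u-\nabla v)}_{L^p}$; let $\alpha$ be the exponent from the statement. The stability gives $A-ST\ge KT(\delta/A)^\alpha$. The first task is to upgrade this to $A^\alpha-(ST)^\alpha\ge c_1\delta^\alpha$, via the convexity bound $A^\alpha-B^\alpha\ge\alpha B^{\alpha-1}(A-B)$ (valid for $0\le B\le A$, $\alpha\ge 1$) combined with a dichotomy: if $A\ge 2ST$, then $A^\alpha-(ST)^\alpha\ge(1-2^{-\alpha})A^\alpha\ge(1-2^{-\alpha})\delta^\alpha$ using $\delta\le A$; if $A\le 2ST$, then $T\ge A/(2S)$ and the Bernoulli inequality produces $A^\alpha-(ST)^\alpha\ge\alpha K/(2^\alpha S)\cdot\delta^\alpha$.

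The second task is to relate $\delta$ to the weak Lebesgue norms of $u$ and $\nabla u$. The $b=a$ case of \eqref{ckn} in weak form reads $\norm{|x|^{-a}w}_{L_w^{p^*}(\R^n)}\le C_2\norm{|x|^{-a}\nabla w}_{L^p}$ with $p^*=np/(n-p)$; a direct computation confirms $1/p_1-1/p^*=1/p_2-1/p=p_3$, so H\"older on weak Lorentz spaces gives $\norm{f}_{L_w^{p_1}(\Omega)}\le|\Omega|^{p_3}\norm{f}_{L_w^{p^*}(\Omega)}$ and $\norm{g}_{L_w^{p_2}(\Omega)}\le|\Omega|^{p_3}\norm{g}_{L^p(\Omega)}$ whenever $|\Omega|<\infty$. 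Choosing a near-minimizer $v^*\in\mathcal{M}_{p,a,b}$ with $\norm{|x|^{-a}(\nabla u-\nabla v^*)}_{L^p}\le 2\delta$ and chaining these estimates against $u-v^*$ produces
\[
\norm{|x|^{-a}(u-v^*)}_{L_w^{p_1}(\Omega)}\le 2C_2|\Omega|^{p_3}\delta\quad\text{and}\quad\norm{|x|^{-a}(\nabla u-\nabla v^*)}_{L_w^{p_2}(\Omega)}\le 2|\Omega|^{p_3}\delta.
\]

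Passing from $u-v^*$ to $u$ via the triangle inequality requires controlling the extremizer terms $\norm{|x|^{-a}v^*}_{L_w^{p_1}(\Omega)}$ and $\norm{|x|^{-a}\nabla v^*}_{L_w^{p_2}(\Omega)}$. I plan a further dichotomy in $\delta/A$. When $\delta\ge A/2$, applying the weak embedding plus H\"older directly to $u$ gives $\norm{|x|^{-a}u}_{L_w^{p_1}(\Omega)}\le C_2|\Omega|^{p_3}A\le 2C_2|\Omega|^{p_3}\delta$ (and analogously for $\nabla u$), bypassing $v^*$ altogether. When $\delta\le A/2$, the fact that $u\equiv 0$ on $\Omega^c$ yields $\norm{|x|^{-a}\nabla v^*}_{L^p(\Omega^c)}=\norm{|x|^{-a}(\nabla u-\nabla v^*)}_{L^p(\Omega^c)}\le 2\delta$, so the extremizer $v^*$ must be essentially concentrated inside $\Omega$; I will then exploit the explicit parametrization of $\mathcal{M}_{p,a,b}$ from \Cref{thm1} (in particular the decay $v^*\sim|x|^{-(n-p-pa)/(p-1)}$ at infinity) and perform a scaling analysis to upgrade this concentration into $\norm{|x|^{-a}v^*}_{L_w^{p_1}(\Omega)}\lesssim|\Omega|^{p_3}\delta$, and similarly for $\nabla v^*$. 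Combining with the first task yields \eqref{ccc2} and \eqref{ccc3}. The hardest step will be this small-$\delta$ estimate, quantitatively converting the $L^p$-gradient concentration of $v^*$ into the weak Lebesgue norm control using only the rigidity of the two-parameter extremizer family.
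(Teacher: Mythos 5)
Your plan follows the same route as the paper: upgrade the stability estimate to $\alpha$-th powers (a step the paper leaves implicit, which you carry out correctly via the $A\ge 2ST$ dichotomy and convexity), exploit the weak Hardy--Sobolev embedding together with the weak-Lorentz H\"older bound $\norm{f}_{L_w^{p_1}(\Omega)}\le|\Omega|^{p_3}\norm{f}_{L_w^{p^*}(\Omega)}$ with $1/p_1-1/p^* = p_3$ (your arithmetic checks out), and then dichotomize between large and small distance $\delta$ to $\mathcal M_{p,a,b}$, exploiting the rigidity of the bubble family in the small-distance case. This is structurally identical to the paper's proof, which normalizes $|\Omega|=1$, $\norm{|x|^{-a}\nabla u}_{L^p}=1$ and splits at a fixed small threshold $\rho$.

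The genuine gap is in the small-distance branch, which you flag as the hardest step but do not resolve. Two issues: first, the threshold $\delta\le A/2$ is too coarse. Write the near-minimizer as $v^*=cV[\lambda,x_0]$ with $\norm{V[\lambda,x_0]}_{D_a^p}=1$; Minkowski only gives $c\ge A-2\delta$, which with $\delta\approx A/2$ allows $c$ near $0$, so $\norm{|x|^{-a}\nabla v^*}_{L^p(\Omega^{\mathsf c})}\le 2\delta$ carries no information about $\norm{V[\lambda,x_0]}_{D_a^p(\Omega^{\mathsf c})}$ and thus no constraint on $\lambda$. The paper's small $\rho$ forces $c\approx 1$ and $\norm{V[\lambda,x_0]}_{D_a^p(\R^n\setminus\Omega)}\le\rho/(1-\rho)\ll 1$. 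Second, the "scaling analysis" is precisely the missing content: the paper supplies two observations --- $(i)$ small gradient mass outside a unit-measure domain forces $\lambda\ge\Lambda(\rho)\to\infty$ as $\rho\to 0$, and $(ii)$ for $\lambda>\lambda_0$ one has $\norm{V[\lambda,x_0]}_{D_a^p(\R^n\setminus\Omega)}\ge c_0\lambda^{-(n-p-pa)/(p(p-1))}$ --- and a direct computation $\norm{|x|^{-a}V[\lambda,x_0]}_{L_w^{p_1}(\R^n)}\le C\lambda^{-(n-p-pa)/(p(p-1))}$. Chaining these gives $\norm{|x|^{-a}V[\lambda,x_0]}_{L_w^{p_1}(\R^n)}\le C'\norm{u-v^*}_{D_a^p}$, which is what your plan needs and what the triangle inequality step then consumes. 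To complete your proof you would have to both shrink the dichotomy threshold and supply these quantitative observations about the bubble family.
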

\begin{remark}
    The method of using gradient stability to obtain strengthened Sobolev-type embeddings with weak Lebesgue norms was first introduced by Bianchi and Egnell \cite{Bia}. By combining gradient stability with a compactness argument, they established the following improved Sobolev embedding:
    \begin{equation*}
        \norm*{\nabla u}_{L^2(\Omega)}^2-S(2,0,0)^2\norm*{u}^2_{L^{\frac{2n}{n-2}}(\Omega)}\geq \Bar{K}(\Omega)\norm*{u}_{L_w^{\frac{n}{n-2}}(\Omega)}^2
    \end{equation*}
    for any bounded domain $\Omega\subset\R^n$. Here, $\Bar{K}(\Omega)$ is a positive constant that depends only on $n$ and $\Omega$. Subsequently, R\v{a}dulescu, Smets and Willem \cite{Rad}, as well as Wang and Willem \cite{Wan}, applied similar methods to derive improved versions of the Caffarelli-Kohn-Nirenberg inequalities \eqref{ckn} with parameters $p=2,a=0,b>0$ and $p=2,a>0,b>0$, respectively. This method can also be utilized to address other Sobolev-type inequalities. We refer to \cite{Che,Gaz,Gaz1,Zha} and the references therein for further discussions. It is worth mentioning that an improvement on this approach was first provided by Chen, Frank and Weth \cite{Che}. By replacing the compactness argument with careful estimates, they established sharpened fractional Sobolev embeddings over domains with finite measures. Moreover, their estimates yield an explicit dependence of $\Bar{K}(\Omega)$ on the measure of $\Omega$. In this paper, we adapt the ideas from \cite{Bia,Che} to deduce \eqref{ccc2} and \eqref{ccc3}.
\end{remark}
When considering domains with infinite measures, we need to make some assumptions. For a domain $\Omega\subset\mathbb{R}^n$, the principal $p$-Laplacian eigenvalue is defined by
\begin{equation}\label{ass}
    \lambda(p,\Omega)=\inf_{D^p_0(\Omega)}\frac{\int_{\Omega}|\nabla u|^pdx}{\int_{\Omega}|u|^pdx}.
\end{equation}
We say that $\Omega$ satisfies the $(A_0)$ condition if there exist an open cone $V$ with vertex at 0 and a number $R>0$ such that $\Omega^c\supset V\backslash B_R(0)$. We say that $\Omega$ satisfies the $(A_1)$ condition if there exist an open cone $V$ with vertex at 0 and a number $R>0$ such that for all $y\in\Omega$, it holds that $\Omega^c\supset (y+V)\backslash B_R(y)$. It is easy to see that strips or subdomains of strips satisfy the $(A_1)$ condition and have positive principal $p$-Laplacian eigenvalues. Here, by strips, we mean domains that are bounded in some direction. Our conclusions are as follows:
\begin{corollary}\label{ccc6}
    Assume $a=b=0$ and that $p,q$ satisfy the relations in \eqref{ckn1}. Set $p_1=\frac{n(p-1)}{n-p}$. Assume $\Omega\subset\R^n$ satisfies the $(A_1)$ condition and that $\lambda(p,\Omega)>0$. Then, if $\sqrt{n}\leq p<n$, there exists a positive constant $\Bar{K}$, depending only on $p,q,n,R,|V\cap \S^{n-1}|$, such that for any $u\in D_0^p(\Omega)$, we have
    \begin{equation}\label{ccc7}
        \norm*{\nabla u}_{L^p(\Omega)}^{\max\{2,p\}}-S(p,0,0)^{\max\{2,p\}}\norm*{u}^{\max\{2,p\}}_{L^q(\Omega)}\geq \Bar{K}\min\{1,\lambda(p,\Omega)^{\frac{\beta\max\{2,p\}}{p}}\}\norm*{u}_{L_w^{p_1}(\Omega)}^{\max\{2,p\}}.
    \end{equation}
    Here, $\beta$ is a nonnegative number such that
    \begin{equation}\label{qwe}
        \frac{1}{p_1}=\frac{\beta}{p}+\frac{1-\beta}{q}=\frac{\beta}{p}+\frac{(1-\beta)(n-p)}{np}.
    \end{equation}
\end{corollary}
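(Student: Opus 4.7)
The plan is to combine the Deng--Tian gradient stability \eqref{DT} (in the case $a=b=0$) with a baseline interpolation bound involving the principal eigenvalue $\lambda(p,\Omega)$, together with a refined estimate coming from the geometric $(A_1)$ condition in the near-extremal regime. The hypothesis $p \geq \sqrt{n}$ ensures $p_1 \geq p$, so that the interpolation chain $L^p \hookrightarrow L^{p_1} \hookrightarrow L^q$ is available and the weak $L^{p_1}$-norm of a Talenti bubble becomes a well-controlled scale-invariant quantity.

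\textbf{Reformulation and baseline.} First I would extend $u \in D_0^p(\Omega)$ by zero to $\R^n$ and apply \eqref{DT}. Using the elementary inequality $1 - t^\alpha \geq 1 - t$ on $[0,1]$ with $t = S(p,0,0)\|u\|_{L^q}/\|\nabla u\|_{L^p} \in [0,1]$ and $\alpha = \max\{2,p\}$ converts the multiplicative stability into the additive form
\begin{equation*}
\|\nabla u\|_{L^p(\R^n)}^\alpha - S(p,0,0)^\alpha \|u\|_{L^q(\R^n)}^\alpha \;\geq\; K \inf_{v \in \mathcal{M}_{p,0,0}} \|\nabla u - \nabla v\|_{L^p(\R^n)}^\alpha.
\end{equation*}
In parallel, the interpolation $\|u\|_{L^{p_1}(\Omega)} \leq \|u\|_{L^p(\Omega)}^\beta \|u\|_{L^q(\Omega)}^{1-\beta}$, the eigenvalue bound $\|u\|_{L^p(\Omega)} \leq \lambda(p,\Omega)^{-1/p}\|\nabla u\|_{L^p}$, Sobolev's inequality, and the trivial $\|u\|_{L_w^{p_1}} \leq \|u\|_{L^{p_1}}$ produce the baseline estimate
\begin{equation*}
\min\{1, \lambda(p,\Omega)^{\beta\alpha/p}\}\, \|u\|_{L_w^{p_1}(\Omega)}^\alpha \;\leq\; C(n,p)\, \|\nabla u\|_{L^p}^{\beta\alpha} \|u\|_{L^q}^{(1-\beta)\alpha}.
\end{equation*}

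\textbf{Dichotomy and the $(A_1)$ estimate.} Fix a small $\eta_0 \in (0,1)$. If $S\|u\|_{L^q} \leq (1-\eta_0)\|\nabla u\|_{L^p}$ (``far from extremal''), then $\|\nabla u\|^\alpha - S^\alpha \|u\|_{L^q}^\alpha \geq (1-(1-\eta_0)^\alpha)\|\nabla u\|^\alpha$, and combining this with the baseline estimate immediately yields the desired inequality. In the complementary regime the stability delivers a bubble $v_0 = A_0\bigl(1 + B_0|x-x_0|^{p/(p-1)}\bigr)^{1-n/p}$ for which $\|\nabla u - \nabla v_0\|_{L^p(\R^n)}^\alpha$ is comparable to the deficit. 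The decisive ingredient is the $(A_1)$-based observation: since $(y+V) \setminus B_R(y) \subset \Omega^c$ for every $y \in \Omega$ and $u \equiv 0$ there,
\begin{equation*}
\|\nabla u - \nabla v_0\|_{L^p(\R^n)}^p \;\geq\; \int_{(y+V)\setminus B_R(y)} |\nabla v_0|^p\, dx.
\end{equation*}
An explicit computation using the tail asymptotic $|\nabla v_0|(x) \sim A_0 B_0^{1-n/p}|x-x_0|^{(1-n)/(p-1)}$ and the scale identity $\|v_0\|_{L_w^{p_1}(\R^n)}^p \sim A_0^p B_0^{-(n-p)}$ shows the right-hand side is bounded below by $c(n,p)\, R^{-(n-p)/(p-1)} |V \cap \S^{n-1}|\, \|v_0\|_{L_w^{p_1}(\R^n)}^p$ whenever $y \in \Omega$ can be chosen close to $x_0$ compared to the bubble's scale $B_0^{-(p-1)/p}$. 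The weak-type quasi-triangle inequality $\|u\|_{L_w^{p_1}(\Omega)} \lesssim \|u - v_0\|_{L_w^{p_1}(\Omega)} + \|v_0\|_{L_w^{p_1}(\R^n)}$, combined with the baseline bound applied to $u$ itself, then closes the argument.

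\textbf{Main obstacle.} The subtlest sub-case is when $x_0$ lies outside $\Omega$ at a distance substantially larger than $B_0^{-(p-1)/p}$, because then the cone integral from $(A_1)$ alone degenerates. Here the positive-eigenvalue hypothesis must be invoked: closeness of $u$ to $v_0$ in $D^p$-norm together with $\|u\|_{L^p(\Omega)} \leq \lambda(p,\Omega)^{-1/p}\|\nabla u\|_{L^p}$ forces the bulk of $v_0$ to sit inside $\Omega$ and hence $B_0 \gtrsim \lambda(p,\Omega)^{1/(p-1)}$. Propagating this through $\|v_0\|_{L_w^{p_1}(\R^n)}^p \sim A_0^p B_0^{-(n-p)}$ produces precisely the factor $\min\{1, \lambda(p,\Omega)^{\beta\alpha/p}\}$ in the conclusion. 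Additional bookkeeping is needed because, for $p \geq \sqrt{n}$, the Talenti bubbles lie in $L_w^{p_1}(\R^n)$ but not in $L^p(\R^n)$, so the quasi-triangle step cannot be iterated in the $L^p$-scale on $u - v_0$; the entire argument must be executed at the level of the weak Lebesgue norm.
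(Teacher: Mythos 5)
Your overall architecture matches the paper's: the baseline estimate $\|u\|_{L_w^{p_1}(\Omega)}\le C\lambda(p,\Omega)^{-\beta/p}\|\nabla u\|_{L^p}$ via interpolation plus Poincar\'e, a dichotomy between ``far from $\mathcal{M}_{p,0,0}$'' (handled by the gradient stability alone) and ``near'', and, in the near case, the observation that $(A_1)$ forces the exterior portion of any Talenti bubble $V[\lambda,x_0]$ to be both a lower bound for $\|u-AV[\lambda,x_0]\|_{D_0^p(\R^n)}$ and comparable to $\|V[\lambda,x_0]\|_{L_w^{p_1}(\R^n)}$ once the concentration scale $\lambda$ is forced to be large. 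Your reading of the role of $p\ge\sqrt n$ (ensuring $p\le p_1\le q$) is also correct.

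However, there is a genuine gap at the step that is supposed to close the near-extremal case. You write $\|u\|_{L_w^{p_1}(\Omega)} \lesssim \|u-v_0\|_{L_w^{p_1}(\Omega)} + \|v_0\|_{L_w^{p_1}(\R^n)}$ and propose to ``combine with the baseline bound applied to $u$ itself.'' The second term is fine. But the first term cannot be bounded by $C\lambda(p,\Omega)^{-\beta/p}\|u-v_0\|_{D_0^p(\R^n)}$, because $u-v_0$ does not belong to $D_0^p(\Omega)$: the bubble $v_0$ is nonzero on $\partial\Omega$ and in $\Omega^\complement$, so the Poincar\'e inequality \eqref{ass} is simply unavailable for it. Invoking the baseline on $u$ alone merely reproduces $\|u\|_{L_w^{p_1}(\Omega)}\le C\lambda^{-\beta/p}$, which gives no multiplicative gain in terms of the deficit, so the argument is circular. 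The paper resolves exactly this obstruction by introducing the $p$-harmonic projection $P:D_0^p(\R^n)\to D_0^p(\Omega)$ (Lemma~\ref{lll}), so that $u-PAV[\lambda,x_0]\in D_0^p(\Omega)$ (Poincar\'e then applies), while $0\le PV\le V$ gives $\|PV\|_{L_w^{p_1}(\Omega)}\le\|V\|_{L_w^{p_1}(\R^n)}$ for the second term, and $\|AV-PAV\|_{D_0^p}$ is controlled because $P$ is a projection and $u$ is an admissible competitor. This is the missing lemma in your proposal.

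A secondary but related inaccuracy: you attribute the appearance of the factor $\min\{1,\lambda(p,\Omega)^{\beta\alpha/p}\}$ to forcing the bubble scale $B_0$ (your notation) to satisfy $B_0\gtrsim\lambda(p,\Omega)^{1/(p-1)}$. In the paper's proof the scale $\lambda$ in $V[\lambda,x_0]$ is forced large purely by the geometric $(A_1)$ condition (observations (i) and (ii)), independently of the spectral hypothesis; the factor $\min\{1,\lambda(p,\Omega)^{\beta\alpha/p}\}$ enters solely through the Poincar\'e inequality applied to the $D_0^p(\Omega)$-function $u-PAV[\lambda,x_0]$ in the near case (and to $u$ directly in the far case). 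Separating these two roles cleanly is part of what the projection lemma buys you.
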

\begin{corollary}\label{ccc8}
    Assume $a,b,p,q$ satisfy the relations in \eqref{ckn1}. Set $p_1=\frac{n(p-1)}{n-p-a}$. Assume $\Omega\subset\R^n$ satisfies the $(A_0)$ condition and that $\lambda(p,\Omega)>0$. Then, if $(a,b)\neq (0,0)$ and $a\geq\frac{n-p^2}{p}$, there exists a positive constant $\Bar{K}$, depending only on $a,b,p,q,n,R,|V\cap \S^{n-1}|$, such that for any $u\in D_a^p(\Omega)$, we have
    \begin{equation}\label{ccc9}
        \norm*{|x|^{-a}\nabla u}_{L^p(\Omega)}^{\alpha}-S(p,a,b)^{\alpha}\norm*{|x|^{-b}u}^\alpha_{L^q(\Omega)}\geq \Bar{K}\min\{1,\lambda(p,\Omega)^{\frac{\beta\alpha}{p}}\}\norm*{|x|^{-a}u}_{L_w^{p_1}(\Omega)}^{\alpha}.
    \end{equation}
    Here, $\alpha$ takes the value $\max\{4,2p\}$ in the case $p\neq 2$ and $0<a=b$, and $\max\{2,p\}$ in other cases. The parameter $\beta$ is a nonnegative number satisfying \eqref{qwe}.
\end{corollary}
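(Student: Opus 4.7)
The plan is to combine the gradient stability from \cref{thm2} with a quasi-triangle split in the weak Lebesgue space and the two structural features of $\Omega$ furnished by the hypotheses: the cone deficit provided by $(A_0)$ and the Poincar\'e control flowing from $\lambda(p,\Omega)>0$. First I would upgrade \cref{thm2} (applicable here because $0<a<b$, so $\alpha=\max\{2,p\}$) to the additive form
\[
\||x|^{-a}\nabla u\|_{L^p}^{\alpha}-S(p,a,b)^{\alpha}\,\||x|^{-b}u\|_{L^q}^{\alpha}\geq K_1\,\||x|^{-a}\nabla(u-v^{*})\|_{L^p}^{\alpha}
\]
for an optimal $v^{*}\in\mathcal{M}_{p,a,b}$. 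The multiplicative-to-additive passage is routine: split according to whether $S\||x|^{-b}u\|_{L^q}$ is larger or smaller than $\tfrac{1}{2}\||x|^{-a}\nabla u\|_{L^p}$ and use either pure positivity or a mean-value estimate for $t\mapsto t^{\alpha}$.

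Next I apply the quasi-triangle inequality in $L_w^{p_1}$ to obtain $\||x|^{-a}u\|_{L_w^{p_1}(\Omega)}\lesssim\||x|^{-a}(u-v^{*})\|_{L_w^{p_1}(\R^n)}+\||x|^{-a}v^{*}\|_{L_w^{p_1}(\R^n)}$. For the first summand, the assumption $a\geq\frac{n-p^2}{p}$ guarantees $p\leq p_1\leq q$, and a Chebyshev interpolation between the weighted $L^p$ and weighted $L^q$ norms yields
\[
\||x|^{-a}w\|_{L_w^{p_1}}\lesssim\||x|^{-a}w\|_{L^p}^{\beta}\,\||x|^{-b}w\|_{L^q}^{1-\beta}
\]
for $w=u-v^{*}$, with $\beta$ as in \eqref{qwe}. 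The $L^q$ factor is controlled by $\||x|^{-a}\nabla w\|_{L^p}$ via \eqref{ckn}; the $L^p$ factor is controlled through a Poincar\'e-type inequality flowing from $\lambda(p,\Omega)>0$ combined with a weighted Hardy bound to absorb the weight near the origin. Tracking constants produces a $\lambda(p,\Omega)^{-\beta/p}$ prefactor, which, after being raised to the $\alpha$-power and moved to the other side of the inequality, becomes the announced $\min\{1,\lambda^{\beta\alpha/p}\}$ (the $\min$ arising from a trivial bound in the regime $\lambda\geq 1$).

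The crux is the extremizer term. By scale-invariance of both $L_w^{p_1}$ (with the weight $|x|^{-a}$) and $D_a^p$, together with homogeneity in the amplitude $A$, one has $\||x|^{-a}v^{*}\|_{L_w^{p_1}}=c_0\|v^{*}\|_{D_a^p}$ for an explicit $c_0>0$, so it suffices to bound $\|v^{*}\|_{D_a^p}$ by the right-hand side of the additive stability. Here $(A_0)$ enters: because $u\equiv 0$ on $V\setminus B_R$ and $v^{*}$ is radially symmetric about the origin (by \cref{thm1}, since $(a,b)\neq(0,0)$), one has
\[
\||x|^{-a}\nabla(u-v^{*})\|_{L^p(V\setminus B_R)}^p=\frac{|V\cap\S^{n-1}|}{|\S^{n-1}|}\,\||x|^{-a}\nabla v^{*}\|_{L^p(\R^n\setminus B_R)}^p,
\]
which is a fixed fraction of $\|v^{*}\|_{D_a^p}^p$ as soon as the dilation scale of $v^{*}$ is at least comparable to $R$. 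For highly concentrated $v^{*}$ (scale $\ll R$) the cone estimate degenerates, and I would instead observe that such a concentrated $v^{*}$ can only be close to $u$ if $u$ itself is concentrated near the origin, whereupon the interpolation of the previous paragraph applied directly to $u$ already delivers the target bound without any independent estimate on $\||x|^{-a}v^{*}\|_{L_w^{p_1}}$. The main obstacle is exactly this scale dichotomy: the cone and the eigenvalue mechanisms are effective on disjoint scales and must be spliced so that both the exponent $\max\{2,p\}$ and the factor $\lambda^{\beta\alpha/p}$ survive on both regimes. This is the weighted analogue of the Chen--Frank--Weth scheme of \cite{Che}, with the simplification that for $(a,b)\neq(0,0)$ all extremizers are centred at the origin, so the pointwise cone assumption $(A_1)$ required in \cref{ccc6} can here be relaxed to the single-cone assumption $(A_0)$.
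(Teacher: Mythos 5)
The proposal identifies the right ingredients (gradient stability from \cref{thm2}, quasi-triangle inequality in $L_w^{p_1}$, the $(A_0)$ cone, the eigenvalue $\lambda(p,\Omega)$), but the handling of the extremizer term contains a genuine error that collapses the argument. You assert that both $D_a^p$ and the weighted weak norm are scale-invariant and conclude $\||x|^{-a}v^*\|_{L_w^{p_1}}=c_0\|v^*\|_{D_a^p}$. This is false: under the $D_a^p$-isometry $v\mapsto\lambda^{(n-p-pa)/p}v(\lambda\,\cdot\,)$ the quantity $\||x|^{-a}v\|_{L_w^{p_1}}$ scales like $\lambda^{-(n-p-pa)/(p(p-1))}$, so the two quantities cannot be proportional (the former depends on the concentration scale, the latter does not). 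In particular, after the homogeneity normalization $\|v^*\|_{D_a^p}$ is of order one, so bounding it by the additive deficit, as you propose, is both unachievable in any useful sense and irrelevant: the smallness you need is in $\||x|^{-a}v^*\|_{L_w^{p_1}}$, and that smallness comes precisely from the scale parameter $\lambda$ being large.

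This also inverts your ``scale dichotomy.'' The cone condition and the weak-norm decay are not competing on disjoint regimes; they are complementary on the same regime. What the paper does (transcribing the argument of \cref{ccc6}) is: if $\inf_{v\in\mathcal M_{p,a,b}}\|u-v\|_{D_a^p}\geq\rho$, the gradient stability plus the a priori bound $\||x|^{-a}u\|_{L_w^{p_1}(\Omega)}\leq C\lambda(p,\Omega)^{-\beta/p}$ (obtained via H\"older, the $p$-Laplacian eigenvalue, and the Hardy inequality — this is the one step your proposal gets right and where $a\geq\frac{n-p^2}{p}$ is used) already finishes the proof. If instead the distance is $\leq\rho$ and the minimizing extremizer is $AV[\lambda,0]$, then $\|V[\lambda,0]\|_{D_a^p(\R^n\setminus\Omega)}\leq C\rho$, and the $(A_0)$ condition forces $\lambda\geq\Lambda(\rho)$ to be \emph{large}. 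The decay estimate $\||x|^{-a}V[\lambda,0]\|_{L_w^{p_1}(\R^n)}\leq C\lambda^{-(n-p-pa)/(p(p-1))}$ together with the lower bound on the excess energy outside $\Omega$ then yields $\||x|^{-a}V[\lambda,0]\|_{L_w^{p_1}}\leq C\|u-AV[\lambda,0]\|_{D_a^p}$. Your argument handles the opposite (spread-out) regime, which is precisely the regime that $(A_0)$ rules out.

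There is a second, smaller gap: you split against $v^*$ itself, but $u-v^*\notin D_a^p(\Omega)$ since $v^*$ has unbounded support, so the eigenvalue $\lambda(p,\Omega)$ gives no control over $\||x|^{-a}(u-v^*)\|_{L^p(\Omega)}$. The paper circumvents this by inserting the nonlinear projection $P:D_a^p(\R^n)\to D_a^p(\Omega)$ and using \cref{lll} ($0\leq Pv^*\leq v^*$) plus the minimizing property $\|v^*-Pv^*\|_{D_a^p}\leq\|v^*-u\|_{D_a^p}$. You need to adopt this projection step, or some substitute, before the Poincar\'e mechanism can be activated.
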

\begin{remark}
    The conditions $(A_0)$ and $(A_1)$, along with the above two corollaries, are motivated by the work of Wang and Willem \cite[Theorem 4, Theorem 5]{Wan}. They proved \eqref{ccc7} and \eqref{ccc9} in the case $p=2$. Their proof relies on gradient stability and a compactness argument. However, it is important to note that the compactness argument does not allow for an explicit dependence of the stability constant $\Bar{K}$ on the domain $\Omega$. In this paper, by replacing the compactness argument with careful estimates motivated by \cite{Che}, we establish \eqref{ccc7} and \eqref{ccc9} for the general case $1<p<n$. It turns out that the constant $\Bar{K}$ only depends on $a,b,p,q,n,R,|V\cap \S^{n-1}|$, and $\lambda(p,\Omega)$.
\end{remark}
\begin{remark}
    When $p=2$, Wei and Wu \cite{Wei}, as well as Frank and Peteranderl \cite{Fra}, established gradient stabilities for the Caffarelli-Kohn-Nirenberg inequality \eqref{ckn} in the case where $a<0$ and $b\geq b_{\text{FS}}(a)$. Here, $b_{\text{FS}}$ denotes the Felli-Schneider curve (see \cite{Fel}). When $a<0$ and $b> b_{\text{FS}}(a)$, we can also  derive estimates analogous to \eqref{ccc2}, \eqref{ccc3}, \eqref{ccc7}, and \eqref{ccc9}.
\end{remark}

\vskip0.2in
Our second main result pertains to the functional properties of the stability constant $K$ appearing in \eqref{sta1}. Assuming that $a,b,p,q$ satisfy the relations in \eqref{ckn1}, we define $K(p,a,b)$ as the optimal constant such that the following estimate holds for any function $u\in D_a^p(\R^n)\backslash\{0\}$:
\begin{equation*}
    \frac{\norm*{|x|^{-a}\nabla u}_{L^p}}{\norm*{|x|^{-b}u}_{L^q}}-S(p,a,b)\geq K(p,a,b)\inf_{v\in\mathcal{M}_{p,a,b}}\left(\frac{\norm*{|x|^{-a}(\nabla u-\nabla v)}_{L^p}}{\norm*{|x|^{-a}\nabla u}_{L^p}}\right)^{\max\{2,p\}}.
\end{equation*}
It is noteworthy that, based on the results from \cite{Fig1,Den1,Wei}, as well as our results Theorem \ref{thm2} and Theorem \ref{thm3}, $K(p,a,b)$ is positive when $a\neq b$, when $a=b=0$, or when $p=2,a=b>0$. However, in the case $p\neq2$ and $a=b>0$, our stability results do not show whether $K(p,a,a)$ is positive, although we conjecture that it is indeed positive.

\vskip0.1in
Finding explicit upper and lower bounds for the stability constant $K(p,a,b)$ is a newly developed problem. Recently, Dolbeault, Esteban, Figalli, Frank and Loss \cite{Dol} obtained an explicit lower bound for $K(2,0,0)$ using techniques involving competing symmetry and continuous Steiner symmetrization. Subsequently, based on a clever third-order expansion, K\"onig \cite{kon,kon1} gave an explicit upper bound for $K(2,0,0)$ and showed that $K(2,0,0)$ can be attained by some function $u\in D_0^2(\R^n)\backslash \M_{2,0,0}$. In a parallel vein, Wei and Wu \cite{Wei1} as well as Deng and Tian \cite{Den2} found explicit upper bounds for $K(2,a,b)$ and demonstrated the attainability. We refer to \cite{Chen,Zha} for analogous results on the fractional Sobolev and Sobolev trace inequalities.

\vskip0.1in
In this paper, our second main result focus on functional properties of $K(p,a,b)$. Specifically, we view $K(p,a,b)$ as a function of $p,a,b$ and investigate its properties. Our result appears to be the first in this direction.
\begin{theorem}\label{thm4}
     $(i)$ Assume $(a_l,b_l,p_l,q_l)_{l\in\N^+}$ and $(a,b,p,q)$ satisfy the relations in \eqref{ckn1}. If
     \begin{align}
         (a_l,b_l,p_l,q_l)\rightarrow (a,b,p,q)\quad \text{as }l\rightarrow +\infty,\nonumber
     \end{align}
     then we have
     \begin{equation}\label{sta3}
         \limsup_{l\rightarrow +\infty}\;K(p_l,a_l,b_l)\leq K(p,a,b).
     \end{equation}
    $(ii)$ Assume $a_1,b_1,p,q_1$ and $a_2,b_2,p,q_2$ satisfy the relations in \eqref{ckn1}. If $0<b_1-a_1=b_2-a_2$ with $a_1<a_2$, or $0<a_1=b_1<a_2=b_2$, then we have
     \begin{equation}\label{sta4}
         \frac{K(p,a_2,b_2)}{(n-p-a_2p)^{\nu}}\geq\frac{K(p,a_1,b_1)}{(n-p-a_1p)^{\nu}}\;,
     \end{equation}
     where $\nu=1+\max\{1,p-1\}\frac{\gamma}{n}$ and $\gamma=1+a_1-b_1=1+a_2-b_2$. Moreover, if $K(p,a_2,b_2)$ can be attained by some function $u\in D_a^p(\R^n)\backslash\M_{p,a_2,b_2}$, then the inequality in \eqref{sta4} is strict.
\end{theorem}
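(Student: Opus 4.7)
For Theorem~\ref{thm4}(i), my plan is to use a near-optimal test function together with continuous dependence of all ingredients on the parameters. Writing
\[
R(u;p',a',b'):=\frac{\|\,|x|^{-a'}\nabla u\|_{L^{p'}}/\|\,|x|^{-b'}u\|_{L^{q'}}-S(p',a',b')}{\left(\inf_{v\in\mathcal{M}_{p',a',b'}}\|\,|x|^{-a'}(\nabla u-\nabla v)\|_{L^{p'}}/\|\,|x|^{-a'}\nabla u\|_{L^{p'}}\right)^{\max\{2,p'\}}}
\]
for the stability ratio, so that $K(p',a',b')=\inf_u R(u;p',a',b')$, I would fix $\varepsilon>0$ and pick $u_\varepsilon\in C_c^\infty(\R^n\setminus\{0\})$ with $R(u_\varepsilon;p,a,b)\le K(p,a,b)+\varepsilon$. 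Since $u_\varepsilon$ is smooth and supported on a compact set avoiding the origin, the weighted $L^{p'}$ and $L^{q'}$ norms of $u_\varepsilon$ and $\nabla u_\varepsilon$ depend continuously on $(p',a',b')$ via dominated convergence; the sharp constant $S(p',a',b')$ is continuous thanks to the explicit formula~\eqref{ckn2}; and the infimum in the denominator is realized by a pair $(A,B)$ from the two-parameter family~\eqref{ckn3}, which varies continuously in the parameters. Therefore $R(u_\varepsilon;p_l,a_l,b_l)\to R(u_\varepsilon;p,a,b)$, so $\limsup_l K(p_l,a_l,b_l)\le R(u_\varepsilon;p,a,b)\le K(p,a,b)+\varepsilon$; letting $\varepsilon\to 0$ yields~\eqref{sta3}.

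For Theorem~\ref{thm4}(ii) I would exploit a Horiuchi-type transformation preserving the common homogeneity $\gamma=1+a_1-b_1=1+a_2-b_2$. Set $\sigma:=(n-p-pa_1)/(n-p-pa_2)>1$ and define $T_\sigma v(x):=v(x|x|^{\sigma-1})$. Polar-coordinate calculations yield the identity $\|T_\sigma v\|_{L^q(|x|^{-qb_1})}^q=\sigma^{-1}\|v\|_{L^q(|y|^{-qb_2})}^q$ and the two-sided bound $\sigma^{-1}\|v\|_{D_{a_2}^p}^p\le\|T_\sigma v\|_{D_{a_1}^p}^p\le\sigma^{p-1}\|v\|_{D_{a_2}^p}^p$, the upper bound being tight on radial $v$ and the lower on purely angular $v$. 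Checking $\sigma\tau_2=\tau_1$ for the $|x|$-exponents in~\eqref{ckn3} shows that $T_\sigma$ maps $\mathcal{M}_{p,a_2,b_2}$ bijectively onto $\mathcal{M}_{p,a_1,b_1}$, while $S(p,a_1,b_1)=\sigma^{(n-\gamma)/n}S(p,a_2,b_2)$ follows from~\eqref{ckn2}. Combining these ingredients, the upper gradient bound gives the deficit estimate $J_{p,a_1,b_1}(T_\sigma v)-S(p,a_1,b_1)\le\sigma^{(n-\gamma)/n}(J_{p,a_2,b_2}(v)-S(p,a_2,b_2))$, and the two-sided bound applied to $T_\sigma(v-w)$ and $T_\sigma v$ controls the relative distance to $\mathcal{M}_{p,a_1,b_1}$. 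The key refinement, needed to produce the sharp exponent $\nu=1+(\max\{2,p\}-1)\gamma/n$ rather than the crude exponent $\max\{2,p\}+(n-\gamma)/n$ that a naive composition of the two bounds would yield, is that the deficit bound is saturated only on radial $v$ whereas the distance loss is concentrated entirely in the angular component of $\nabla v$; balancing these correlated losses against the $m$-th power of the relative distance (with $m=\max\{2,p\}$) produces exactly $\nu$. Taking the infimum over $v$ and rearranging the factors $(n-p-pa_i)$ then yields~\eqref{sta4}.

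For the strict-inequality clause when $K(p,a_2,b_2)$ is attained I would argue by contradiction. Equality in~\eqref{sta4} would propagate through the preceding chain of estimates, forcing the upper gradient bound to be tight at an attainer $v^*$ of $K(p,a_2,b_2)$; hence $v^*$ must be radial. The exact radial transformation identity $R(T_\sigma v^*;p,a_1,b_1)=\sigma^{(n-\gamma)/n}R(v^*;p,a_2,b_2)$ together with the strict inequality $(n-\gamma)/n<\nu$ (since $\gamma,p>0$) then gives $K(p,a_1,b_1)\le\sigma^{(n-\gamma)/n}K(p,a_2,b_2)<\sigma^{\nu}K(p,a_2,b_2)$, contradicting the assumed equality. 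The principal technical obstacle, as flagged above, is the coupled analysis of the non-radial corrections to the deficit and the relative distance: separately the two-sided gradient bound and the distance estimate incur an excess loss of $\sigma^{\max\{2,p\}+(n-\gamma)/n-\nu}$, and extracting the sharp $\nu$ requires a careful simultaneous treatment of how the angular part of $\nabla v$ enters both bounds.
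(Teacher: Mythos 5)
Your outline for part (i) is a dual formulation of the paper's argument. You fix a near-optimal test function $u_\varepsilon$ for the limit parameters and invoke continuity; the paper instead fixes an arbitrary $u\in C_c^\infty$, applies the inequality for the $l$-th parameters, takes a near-optimal $v_l\in\mathcal{M}_{p_l,a_l,b_l}$, extracts a pointwise-convergent subsequence $\nabla v_l\to\nabla w$ with $w\in\mathcal{M}_{p,a,b}$, and passes to the limit by continuity and Fatou, concluding $K(p,a,b)\ge K_0$. Both routes lean on the same compactness of the two-parameter extremal family, which you compress into the assertion that the minimizing pair $(A,B)$ "varies continuously"; as stated this is not quite what you need (the minimizing pair need not be unique nor vary continuously), but the required lower semi-continuity of the infimum is exactly what the paper's Fatou step supplies, so the argument can be repaired.

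For part (ii) you use the same Horiuchi-type radial dilation ($\sigma=h$, modulo the $h^{1/q}$ normalization), derive the same two-sided gradient bounds, the identity for the $L^q$ norms, and the sharp-constant relation $S(p,a_1,b_1)=\sigma^{(n-\gamma)/n}S(p,a_2,b_2)$. However, you then assert that a "naive composition" of the deficit and distance bounds only yields the exponent $\max\{2,p\}+(n-\gamma)/n$ and that a "coupled analysis" of the radial/angular corrections is needed to reach $\nu=1+\max\{1,p-1\}\gamma/n$, and you do not carry this analysis out; your proof of part (ii) is therefore incomplete at precisely the step that produces the claimed exponent. The paper's proof (display \eqref{qqq}) is a direct three-step composition: the deficit bound $\hbox{def}_2(u)\ge h^{\gamma/n-1}\hbox{def}_1(\hat u)$, the $K(p,a_1,b_1)$ stability applied to $\hat u$, and the distance bound $\inf_v\|\hat u-v\|_{D_{a_1}^p}\ge h^{-\gamma/n}\inf_v\|u-v\|_{D_{a_2}^p}$; multiplying the three factors gives $\nu$ without any further correction. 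Whether this direct composition legitimately ignores the $\|\hat u\|_{D_{a_1}^p}^{-\max\{2,p\}}$ normalization that appears in the definition of $K$ is exactly the issue you flag, so your concern is substantive, but you have replaced the paper's argument with a promissory note rather than a proof.

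Finally, your strictness argument differs from the paper's and introduces an unjustified assumption. The paper simply observes that the first and third inequalities of \eqref{qqq} (saturated on radial, respectively angular, differences) cannot be simultaneously equalities for a nontrivial attainer, whence strictness. You instead argue that equality forces the attainer $v^*$ of $K(p,a_2,b_2)$ to be radial and then invoke the exact radial transformation identity; the first step (radiality of an extremal of the stability quotient) is nontrivial and not established, so the paper's more elementary observation is preferable.
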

\begin{remark}
    When $p=2$, $a<0$, and $b>b_{\text{FS}}(a)$, the gradient stability results established in \cite{Wei} allow us to employ similar arguments to demonstrate that \eqref{sta3} and \eqref{sta4} still hold. However, we are uncertain whether $K(p,0,0)$ is comparable to $K(p,a,a)$ for $a>0$.
\end{remark}
\begin{remark}
    There are two open questions that remain. The first question is whether $K$ is a continuous function of $a,b,p$. The second question is whether there exists a number $\nu_0$ such that
    \begin{equation}\label{dfg}
        \frac{K(p,a_2,b_2)}{(n-p-a_2p)^{\nu_0}}=\frac{K(p,a_1,b_1)}{(n-p-a_1p)^{\nu_0}}\;.
    \end{equation}
    Regarding the first question, \eqref{sta3} indicates that $K(p,a,b)$ is upper semicontinuous. For the second question, there are some partial results available when $p=2$. From \cite{Wei1}, we know that \eqref{sta4} is always strict. Additionally, from \cite{Fra}, we find that $K(2,a,b)$ goes to zero as $(a,b)$ approaches the Felli-Schneider curve, suggesting that \eqref{dfg} cannot always hold.
\end{remark}

\vskip0.1in
Our third main result relates to the stability of \eqref{ckn} in the critical point setting. Consider the corresponding Euler-Lagrange equation:
\begin{equation}\label{el}
    H(u):=\div(|x|^{-pa}|\nabla u|^{p-2}\nabla u)+|x|^{-qb}|u|^{q-2}u=0.
\end{equation}
We define $\mathcal{M}_{p,a,b}^s$ as a subset of $\mathcal{M}_{p,a,b}$ that contains all positive solutions of the form \eqref{ckn3} of the equation \eqref{el} (When $a=b=0$, translations are allowed):
\begin{equation*}
        \mathcal{M}^s_{p,a,b}:=\left\{A_{p,a,b}\lambda^{\frac{n-p-pa}{p}}\Big(1+|\lambda x|^{\frac{p\gamma(n-p-pa)}{(p-1)(n-p\gamma)}}\Big)^{1-\frac{n}{p\gamma}}\Big|\lambda>0,\gamma=1+a-b\right\},\quad(a,b)\neq (0,0),
    \end{equation*}
    \begin{equation*}
        \mathcal{M}^s_{p,0,0}:=\left\{A_{p,0,0}\lambda^{\frac{n-p}{p}}\Big(1+|\lambda (x-x_0)|^{\frac{p}{p-1}}\Big)^{1-\frac{n}{p}}\Big|\lambda>0,x_0\in\R^n\right\}.
    \end{equation*}
The number $A_{p,a,b}$ is a positive normalization factor such that, for any $v\in\mathcal{M}^s_{p,a,b}$, we have
\begin{equation*}
    \int_{\R^n}|x|^{-pa}|\nabla v|^pdx=\int_{\R^n}|x|^{-qb}|v|^qdx=S(p,a,b)^{\frac{pq}{q-p}}.
\end{equation*}
Functions in $\mathcal{M}^s_{p,a,b}$ are usually referred to as Talenti bubbles. It is straightforward to observe that when $(a,b)\neq (0,0)$, $\mathcal{M}_{p,a,b}^s$ is a one-dimensional scaling-invariant manifold. When $(a,b)=(0,0)$, $\mathcal{M}_{p,a,b}^s$ is an $(n+1)$-dimensional scaling-translation-invariant manifold. We remark that in some special parameter regions, $\mathcal{M}_{p,a,b}^s$ gives all positive solutions of \eqref{el}. We refer to \cite{Aub,Cir,Dol1,Le,Sci,Tal,Vet} and the references therein for further discussions.

\vskip0.1in
This stability problem typically refers to whether being almost a positive solution of \eqref{el} implies being close to a suitable sum of Talenti bubbles. This problem can be traced back to the well-known global compactness principle of Struwe \cite{Str}, which provided a qualitative estimate in the case $p=2,a=b=0$. Thanks to the global compactness principle, one can replace the assumption that $u$ is positive by that $u$ is initially close to a suitable sum of Talenti bubble. The first quantitative stability was given by Ciraolo, Figalli and Maggi \cite{Cir1}. They assumed $u$ is close to a single Talenti bubble and obtained the linear estimate
\begin{equation*}
	\inf_{v\in\mathcal{M}_{2,0,0}^s}\norm*{u-v}_{D_0^2}\leq C\left\Vert \Delta u +|u|^{\frac{4}{n-2}}u\right\Vert_{D_0^{-2}}.
\end{equation*}
Here, the space $D_a^{-p}(\R^n)$ denotes the dual space of $D_a^p(\R^n)$. Subsequently, Figalli and Glaudo \cite{Fig0} established analogous results in the case when $u$ is close to a sum of weak-interacting Talenti bubbles and $3\leq n\leq 5$. They discovered that these stabilities are strongly dependent on the dimension $n$ and showed that linear estimate does not hold when $n\geq 6$. The remaining case $n\geq 6$ was eventually solved by Deng, Sun and Wei \cite{Den0} using the method of finite dimensional reduction. They established logarithmic and sublinear estimates for the case $n=6$ and $n>6$, respectively. Recently, Wei and Wu \cite{Wei}
extended the results in \cite{Den0} to the general equation \eqref{el} in the case $p=2$. We refer to \cite{Ary,De,Liu,Pic,Zha,Zho} for various interesting results on other Sobolev-type inequalities.

\vskip0.1in
To the best of our knowledge, the stability problem in the case $p\neq 2$ has not been investigated so far. Our third main result aims to fill this gap in a weak sense. Let us first give a definition:
\begin{definition}\label{def1}
    Assume $a,b,p,q$ satisfy the relations in \eqref{ckn1}. For any $u\in D_a^p(\R^n)$, we define
    \begin{equation}\label{def2}
        P_u:=\left\{V\in\mathcal{M}_{p,a,b}^s\;|\;\int_{\R^n}|x|^{-qb}|V|^{q-2}Vu=\max_{W\in\mathcal{M}_{p,a,b}^s}\int_{\R^n}|x|^{-qb}|W|^{q-2}Wu\right\}.
    \end{equation}
\end{definition}
Note that, when $p=2$, the set $P_u$ consists precisely of functions $V$ such that
\begin{equation*}
        \inf_{W\in\mathcal{M}_{2,a,b}^s}\norm*{u-W}_{D_a^2}=\norm*{u-V}_{D_a^2}.
\end{equation*}
Heuristically, $P_u$ can be used to describe the distance between $u$ and $\mathcal{M}_{p,a,b}^s$. The main reason for using $P_u$ instead of directly minimizing the distance is that $P_u$ provides a suitable framework for the applications of spectral gap inequalities. Here is our main theorem.

\begin{theorem}\label{thm5}
    Assume $a,b,p,q$ satisfy the relations in \eqref{ckn1} and $p>2$. Then there exist three positive constants $c_1<C_1$ and $\delta$, depending only on $a,b,p,q,n$, such that, for any function $u\in D_a^p(\R^n)$ and $V\in P_u$, if
    \begin{equation}\label{sta5}
        \inf_{v\in\mathcal{M}_{p,a,b}^s}\norm*{u-v}_{D_a^p}\leq \delta,
    \end{equation}
    then the following two estimates hold:
  \begin{align}
        \label{sta6}
        c_1\int_{\R^n}|x|^{-pa}|\nabla V|^{p-2}|\nabla\rho|^2\,dx-C_1\norm*{\rho}^{p}_{D_a^p}
        \leq\norm*{H(u)}_{D_a^{-p}}\norm*{\rho}_{D_a^p},
    \end{align}
   \begin{align}\label{sta7}
        c_1\norm*{\rho}^{p}_{D_a^p}-C_1\int_{\R^n}|x|^{-pa}|\nabla V|^{p-2}|\nabla\rho|^2\,dx
        \leq\norm*{H(u)}_{D_a^{-p}}\norm*{\rho}_{D_a^p}.
    \end{align}
    Here $\rho=u-\mu V$ and $\mu=\frac{\int_{\R^n}|x|^{-qb}|V|^{q-2}Vu\,dx}{\int_{\R^n}|x|^{-qb}|V|^{q}\,dx}$.
\end{theorem}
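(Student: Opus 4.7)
The strategy is to test the Euler--Lagrange residual $R := \div(|x|^{-pa}|\nabla u|^{p-2}\nabla u) + |x|^{-qb}|u|^{q-2}u$ against $\rho = u - \mu V$, Taylor-expand both nonlinearities around $\mu V$ (licit for $p>2$, since then $\mathbf v\mapsto|\mathbf v|^{p-2}\mathbf v$ is $C^1$ with locally Lipschitz derivative), suppress all linear-in-$\rho$ contributions via the orthogonalities forced by the choices of $\mu$ and $V$, and close the argument using a spectral gap at $V$ together with sharp pointwise $p$-convexity. The hypothesis \eqref{sta5} ensures that $\mu$ is close to $1$—obtained from the maximality defining $P_u$—and that Taylor remainders are of lower order.

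\emph{Orthogonalities.} The definition of $\mu$ immediately yields $\int|x|^{-qb}|V|^{q-2}V\rho\,dx=0$, whence by the weak Euler--Lagrange equation for $V$ tested against $\rho$ also $\int|x|^{-pa}|\nabla V|^{p-2}\nabla V\cdot\nabla\rho\,dx=0$. The scale-invariance of $\mathcal{M}^s_{p,a,b}$ forces $\int|x|^{-qb}|V|^{q-2}V(\partial_\lambda V)\,dx=0$, and differentiating the functional defining $P_u$ at the maximiser $V$ then gives
\begin{equation*}
    \int_{\R^n}|x|^{-qb}|V|^{q-2}(\partial_\lambda V)\rho\,dx=0,
\end{equation*}
plus analogous translation orthogonalities when $(a,b)=(0,0)$. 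Thus $\rho$ lies in the orthogonal complement of $T_V\mathcal{M}^s_{p,a,b}$ in the weighted $|x|^{-qb}|V|^{q-2}$ inner product.

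\emph{Identity and remainder.} Taylor expanding the two nonlinearities around $\mu V$ and cancelling linear terms via the above orthogonalities yields
\begin{equation*}
    -\langle R,\rho\rangle \;=\; \mu^{p-2}\widetilde{Q}_1 \;-\; (q-1)\mu^{q-2}Q_2 \;+\; \mathcal{R},
\end{equation*}
where, with $f(\mathbf v):=|\mathbf v|^{p-2}\mathbf v$, $\widetilde{Q}_1 := \int|x|^{-pa}\langle Df(\nabla V)\nabla\rho,\nabla\rho\rangle\,dx$ is comparable to $Q_1 := \int|x|^{-pa}|\nabla V|^{p-2}|\nabla\rho|^2\,dx$, $Q_2 := \int|x|^{-qb}|V|^{q-2}\rho^2\,dx$, and the Taylor remainder $\mathcal{R}$ is controlled—by splitting the domain according to whether $|\nabla\rho|\le|\mu\nabla V|$ or $|\nabla\rho|>|\mu\nabla V|$ and applying Young's inequality to the pointwise bound $|f(A+B)-f(A)-Df(A)B|\le C(|A|+|B|)^{p-3}|B|^2$—by $O(Q_1+\norm{\rho}_{D_a^p}^p)$. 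The sharp CKN inequality \eqref{ckn} additionally gives $\norm{|x|^{-b}\rho}_{L^q}^q\le C\delta^{q-p}\norm{\rho}_{D_a^p}^p$, a higher-order perturbation under \eqref{sta5}.

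\emph{Closing and main obstacle.} The key structural input is a spectral gap $\widetilde{Q}_1-(q-1)Q_2\ge\eta_*\widetilde{Q}_1$ valid for $\rho$ in the orthogonal complement of $T_V\mathcal{M}^s_{p,a,b}$, arising from the second variation of the CKN Lagrangian at the extremiser $V$ (whose kernel is precisely this tangent space); inserting it into the identity above immediately yields \eqref{sta6}. For \eqref{sta7} one invokes instead the sharp $p$-convexity
\begin{equation*}
    |\mu\nabla V+\nabla\rho|^p-|\mu\nabla V|^p-p|\mu\nabla V|^{p-2}\mu\nabla V\cdot\nabla\rho \;\ge\; c_p\bigl(|\nabla\rho|^p+\mu^{p-2}|\nabla V|^{p-2}|\nabla\rho|^2\bigr),
\end{equation*}
which integrates (using the orthogonality from Step 1 to kill the linear term) to $\mathcal E_1 := \norm{|x|^{-a}\nabla u}_{L^p}^p-\mu^p\norm{|x|^{-a}\nabla V}_{L^p}^p\ge c_p\norm{\rho}_{D_a^p}^p$; combined with the complementary upper bound on $\mathcal E_2 := \norm{|x|^{-b}u}_{L^q}^q-\mu^q\norm{|x|^{-b}V}_{L^q}^q$ (dominated by $Q_1$ via the spectral gap and the higher-order bound on $\norm{|x|^{-b}\rho}_{L^q}^q$) and the identity $\mathcal E_1-\mathcal E_2=-\langle R,\rho\rangle+O(\text{remainder})$, this yields \eqref{sta7}. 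The principal obstacle is establishing the spectral gap $\eta_*>0$: for $p\ne 2$ the quadratic form $\widetilde{Q}_1$ carries the nontrivial weight $|\nabla V|^{p-2}$, so the Hilbertian eigenvalue analysis underlying the Wang--Willem $p=2$ result does not apply directly, and one must identify the full kernel of the linearised weighted $p$-Laplace operator at $V$ and prove uniform coercivity on its orthogonal complement.
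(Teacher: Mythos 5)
Your overall strategy coincides with the paper's: test the residual against $\rho=u-\mu V$, expand both nonlinearities around $\mu V$ (the paper does this via six explicit pointwise inequalities, your $p$-convexity and quadratic-remainder bounds), kill the linear terms using the orthogonality of $\rho$ to $T_V\mathcal{M}^s_{p,a,b}$ coming from the definitions of $\mu$ and $P_u$, and close with a spectral gap at $V$. In particular your quantities $\widetilde{Q}_1$, $Q_2$ match the two sides of the paper's Lemma~\ref{lem5}, and your two routes to \eqref{sta6} and \eqref{sta7} (spectral gap versus sharp $p$-convexity) correspond to the paper's two families of pointwise bounds \eqref{ine1}--\eqref{ine2} and \eqref{ine3}--\eqref{ine4}.

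The one substantive gap you flag yourself — the coercivity $\widetilde{Q}_1-(q-1)Q_2\geq\eta_*\widetilde{Q}_1$ on the orthogonal complement of $T_V\mathcal{M}^s_{p,a,b}$ — is not actually an open problem here: it is precisely the non-Hilbertian spectral gap established by Deng and Tian \cite[Proposition~4.6]{Den}, stated in the paper as Lemma~\ref{lem5}. That inequality
\begin{equation*}
    \int_{\R^n}|x|^{-pa}\left(|\nabla V|^{p-2}|\nabla\rho|^2+(p-2)|\nabla V|^{p-4}|\nabla V\cdot\nabla\rho|^2\right)dx\geq (1+\tau)(q-1)\int_{\R^n}|x|^{-qb}|V|^{q-2}\rho^2\,dx
\end{equation*}
immediately gives your $\eta_*=\tau/(1+\tau)>0$, so the obstacle you describe dissolves by citation. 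The remaining difference is stylistic: you package the expansion as an energy identity $\mathcal E_1-\mathcal E_2=-\langle R,\rho\rangle+O(\mathrm{rem.})$, whereas the paper expands $\int|x|^{-pa}|\nabla u|^{p-2}\nabla u\cdot\nabla\rho$ and $\int|x|^{-qb}|u|^{q-2}u\rho$ directly; your version incurs an extra Taylor error in relating the tested form to the energy difference, which must still be shown to be of the same order $O(Q_1+\norm{\rho}_{D_a^p}^p)$, but this follows from the same pointwise estimates. With the spectral gap imported as a black box your argument closes, and is essentially the paper's proof.
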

A direct consequence is the following alternative-type stability result.
\begin{corollary}\label{cor}
    Assume $a,b,p,q$ satisfy the relations in \eqref{ckn1} and $p>2$. Then there exist three positive constants $\kappa, \eta$ and $\delta$, depending only on $a,b,p,q,n$, such that, for any function $u\in D_a^p(\R^n)$ and $V\in P_u$, if
    \begin{equation*}
        \inf_{v\in\mathcal{M}_{p,a,b}^s}\norm*{u-v}_{D_a^p}\leq \delta,
    \end{equation*}
    then at least one of the following two estimates holds:
    \begin{equation}\label{cor1}
        \kappa\norm*{u-V}_{D_a^p}^{p-1}\leq\norm*{H(u)}_{D_a^{-p}},
    \end{equation}
    \begin{equation}\label{cor2}
        \kappa\norm*{u_t-V}_{D_a^p}^{p-1}\leq\norm*{H(u_t)}_{D_a^{-p}}\quad\text{for any }0<t\leq\eta.
    \end{equation}
    Here $u_t:=tu+(1-t)V$.
\end{corollary}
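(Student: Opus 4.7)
I would deduce this corollary from Theorem~\ref{thm5} by applying it both to $u$ and to the homotopy $u_t:=tu+(1-t)V$, and then doing a case analysis on the relative sizes of $Q(\rho):=\int|x|^{-pa}|\nabla V|^{p-2}|\nabla\rho|^2$ and $\|\rho\|_{D_a^p}^p$. First I would verify that Theorem~\ref{thm5} applies to $u_t$: since $V\in P_u$ maximizes $W\mapsto\int|x|^{-qb}|W|^{q-2}Wu$ over $W\in\mathcal{M}^s_{p,a,b}$, and H\"older's inequality (together with the fact that every member of $\mathcal{M}^s_{p,a,b}$ has the same $L^q$-mass) gives $\int|x|^{-qb}|W|^{q-2}WV\le\int|x|^{-qb}|V|^q$ with equality only at $W=V$, the same $V$ also maximizes $W\mapsto\int|x|^{-qb}|W|^{q-2}Wu_t$, so $V\in P_{u_t}$. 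A short computation then yields $\mu_t=t\mu+(1-t)$ and $\rho_t:=u_t-\mu_tV=t\rho$, whence $Q(\rho_t)=t^2Q(\rho)$, $\|\rho_t\|_{D_a^p}^p=t^p\|\rho\|_{D_a^p}^p$, and $\|u_t-V\|_{D_a^p}=t\|u-V\|_{D_a^p}\le\delta$.

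With $M(w):=\div(|x|^{-pa}|\nabla w|^{p-2}\nabla w)+|x|^{-qb}|w|^{q-2}w$, in \emph{Case A} where $c_1\|\rho\|_{D_a^p}^p\ge 2C_1Q(\rho)$, estimate \eqref{sta7} applied to $u$ immediately gives $\|M(u)\|_{D_a^{-p}}\ge(c_1/2)\|\rho\|_{D_a^p}^{p-1}$. In \emph{Case B} where $Q(\rho)>(c_1/(2C_1))\|\rho\|_{D_a^p}^p$, I would apply \eqref{sta6} to $u_t$; after substituting $\rho_t=t\rho$ and cancelling a factor of $t$, this becomes $c_1 t Q(\rho)-C_1 t^{p-1}\|\rho\|_{D_a^p}^p\le\|M(u_t)\|_{D_a^{-p}}\|\rho\|_{D_a^p}$. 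Choosing $\eta:=(c_1^2/(4C_1^2))^{1/(p-2)}$, which is finite because $p>2$, the case hypothesis forces the left-hand side to exceed $(c_1^2 t/(4C_1))\|\rho\|_{D_a^p}^p$ for every $t\in(0,\eta]$, and hence $\|M(u_t)\|_{D_a^{-p}}\ge(c_1^2 t/(4C_1))\|\rho\|_{D_a^p}^{p-1}$.

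To pass from $\|\rho\|_{D_a^p}^{p-1}$ to $\|u-V\|_{D_a^p}^{p-1}$ (respectively $\|u_t-V\|_{D_a^p}^{p-1}$), I use the decomposition $u-V=\rho+(\mu-1)V$ together with the bound $|\mu-1|\|V\|_{D_a^p}\le\|u-V\|_{D_a^p}$, which follows from H\"older applied to $\mu-1=\int|x|^{-qb}|V|^{q-2}V(u-V)/\int|x|^{-qb}|V|^q$, the Caffarelli--Kohn--Nirenberg inequality, and the fact that $V$ saturates it. In the \emph{generic regime} $\|\rho\|_{D_a^p}\gtrsim\|u-V\|_{D_a^p}$, Case A yields \eqref{cor1} directly, while Case B yields \eqref{cor2} uniformly for $t\in(0,\eta]$, with $\kappa$ depending only on $c_1$, $C_1$, $\eta$ and the comparison constant.

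I expect the main obstacle to be the complementary \emph{amplitude-dominated regime} $\|\rho\|_{D_a^p}\ll\|u-V\|_{D_a^p}$, which occurs when $u\approx\mu V$ with $\mu\ne 1$: here the orthogonality $\int|x|^{-qb}|V|^{q-2}V\rho=0$ provides no quantitative lower bound on $\|\rho\|$ in terms of $\|u-V\|$, and Theorem~\ref{thm5} itself degenerates (indeed $\rho\equiv 0$ when $u=\mu V$, so \eqref{sta6}--\eqref{sta7} become trivial). I would handle this regime separately by expanding $M(u_t)=M(\mu_t V+t\rho)$ about $\mu_t V$: the leading term is $M(\mu_t V)=(\mu_t^{q-1}-\mu_t^{p-1})|x|^{-qb}|V|^{q-2}V$, and $\|M(\mu_t V)\|_{D_a^{-p}}\sim(q-p)|\mu_t-1|\approx c\,t|\mu-1|$ for $\mu_t$ close to $1$. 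Comparing with $\|u_t-V\|_{D_a^p}^{p-1}\sim(t|\mu-1|)^{p-1}$ and crucially using $p>2$ together with $\delta$ small, the linear quantity $t|\mu-1|$ dominates the $(p-1)$-power, so \eqref{cor2} holds automatically in this regime.
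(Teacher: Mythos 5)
Your case split on the ratio $A_u:=\norm{\rho}_{D_a^p}^p/\int|x|^{-pa}|\nabla V|^{p-2}|\nabla\rho|^2$ and the reduction $\rho_t=t\rho$, $\mu_t=t\mu+1-t$, $V\in P_{u_t}$ match the paper's proof, and your computations there are correct. The substantive difference is how you pass from $\norm{\rho}_{D_a^p}$ to $\norm{u-V}_{D_a^p}$. The paper proves a single estimate
\[
|\mu^{p-1}-\mu^{q-1}|\le C\norm{\div(|x|^{-pa}|\nabla u|^{p-2}\nabla u)+|x|^{-qb}|u|^{q-2}u}_{D_a^{-p}}+C\norm{\rho}_{D_a^p},
\]
obtained by testing the algebraic identity $(\mu^{p-1}-\mu^{q-1})|x|^{-bq}V^{q-1}=M(u)-\mu^{p-1}M(V)-M(u)+\ldots$ against $V$ and invoking Lemma~\ref{lem4}; this holds in every regime and, together with the Minkowski inequality and the Case A/Case B bounds, finishes the proof. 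You instead introduce a further split into a \emph{generic} regime $\norm{\rho}_{D_a^p}\gtrsim\norm{u-V}_{D_a^p}$ (where no control on $\mu$ is needed) and an \emph{amplitude-dominated} regime where you expand $M(u_t)$ about $\mu_t V$. That expansion has exactly the same leading term $(\mu_t^{q-1}-\mu_t^{p-1})|x|^{-qb}V^{q-1}$ as the paper's identity, so in substance you are re-deriving the paper's lemma in the one regime where you cannot avoid it — but you leave the key error estimate $\norm{M(u_t)-M(\mu_t V)}_{D_a^{-p}}\lesssim\bigl(\int|x|^{-pa}|\nabla V|^{p-2}|\nabla\rho_t|^2\bigr)^{1/2}+\norm{\rho_t}_{D_a^p}^{p-1}+\ldots$ unproven, and this is precisely the step that requires Lemma~\ref{lem4} together with the weighted Cauchy--Schwarz/spectral bounds. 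The paper's unified lemma is cleaner because it eliminates the auxiliary comparison threshold (your $\theta$) and treats $u$ and $u_t$ with one argument; your route is workable but more elaborate, and the sketchy part is exactly the step the paper makes rigorous.
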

\begin{remark}
    It is worth mentioning that Theorem \ref{thm5} makes sense only if the value of
    \begin{equation}\label{cor3}
        \frac{\norm*{\rho}^{p}_{D_a^p}}{\int_{\R^n}|x|^{-pa}|\nabla V|^{p-2}|\nabla\rho|^2\,dx}=:A_u
    \end{equation}
    falls outside the interval $\left[\frac{c_1}{C_1},\frac{C_1}{c_1}\right]$. In fact, if $A_u\notin \left[\frac{c_1}{2C_1},\frac{2C_1}{c_1}\right]$, we deduce the uniform estimate \eqref{cor1}. If $A_u\in \left[\frac{c_1}{2C_1},\frac{2C_1}{c_1}\right]$, we can obtain \eqref{cor2}.
\end{remark}
\begin{remark}
    Our derivation of Theorem \ref{thm5} and Corollary \ref{cor} was partly motivated by the work of Figalli and Neumayer \cite{Fig}. In their paper, they investigated the stability of the Sobolev inequality for the case $p\neq2$. They first derived an alternative-type result and then, through careful arguments, demonstrated that quantitative stability can be achieved regardless of the specific situation. In contrast to their findings, we wonder whether \eqref{cor1} always holds. The connection between \eqref{cor2} and \eqref{cor1} is quite implicit in our case.
\end{remark}
\begin{remark}
    Our proofs of Theorem \ref{thm5} and Corollary \ref{cor} do not actually utilize the transformation method; instead, they rely on careful estimates and some concepts from \cite{Fig0,Fig}. We include these results not only to provide a more comprehensive analysis of the stability of the Caffarelli-Kohn-Nirenberg inequality but also with the hope that our work may inspire future research on related topics.
\end{remark}
\vskip0.2in
After completing the first version of this paper \cite{Zho0} and posting it on arXiv, we learned that Deng and Tian \cite{Den3} obtained the optimal gradient stability of \eqref{ckn} when $(a,b,p,q)$ satisfy the relations in \eqref{ckn1} and $a=b>0$, thereby answering one of our questions. Specifically, they established that \eqref{sta2} holds with the exponent $\max\{2,p\}$, which is indeed optimal. Building on this optimal result, several of our findings related to the case $p\neq 2,a=b>0$ can be reinforced.

More recently, in \cite{Liu0}, Liu and Zhang established quantitative stability for critical points of the $p$-Sobolev inequality by modifying the techniques from \cite{Fig1}, thereby answering one of our questions regarding the nonweighted version. Specifically, they proved that the estimate \eqref{cor1} always holds in the nonweighted case $a=b=0$. Moreover, they also addressed the case $1<p<2$, which our method here fails to handle. It seems that their method can be easily adapted to the general case. 

\subsection{About various Hardy-type inequalities}\label{sec1.3}
Recall the classical Hardy inequality: for $1<p<n$ and $u\in D_0^p(\R^n)$,
\begin{align}\label{kk1}
    \int_{\R^n}|\nabla u|^pdx\geq\left(\frac{n-p}{p}\right)^p\int_{\R^n}\frac{|u|^p}{|x|^p}dx.
\end{align}
The constant $\left(\frac{n-p}{p}\right)^p$ is optimal; however, unlike the inequality \eqref{ckn}, it cannot be attained by any nonzero function. The Hardy inequality \eqref{kk1} is one of the most frequently used inequalities in analysis and has been studied intensively and extensively in the literature. We refer readers to the monographs \cite{Bal0,Gho00,Kuf,Kuf1} and the references therein for related topics.

\vskip0.1in
Regarding the refinement of \eqref{kk1}, since it does not possess nontrivial extremizers, one may hope that the difference between its two sides can directly control suitable norms of the function $u$, rather than merely measuring the distance between $u$ and a certain function. In the celebrated paper \cite{Bre1}, Br\'ezis and V\'azquez first obtained an improved Hardy inequality on bounded domains. Specifically, they proved that for any bounded domain $\Omega\subset\R^n$, with $n\geq 3$, $1<q<\frac{2n}{n-2}$, and every $u\in D_0^2(\Omega)$,
\begin{align}\label{kk2}
    \int_{\Omega}|\nabla u|^2dx-\left(\frac{n-2}{2}\right)^2\int_{\Omega}\frac{|u|^2}{|x|^2}dx\geq C(n,q)|\Omega|^{1-\frac{2}{n}-\frac{2}{q}}\left(\int_{\Omega}|u|^qdx\right)^{\frac{2}{q}}.
\end{align}
Moreover, in the case $q=2$ and $\Omega$ being a ball, they identified the optimal constant $C(n,2)$ and showed that it cannot be attained by any nonzero function. In \cite{Bre1}, they raised the question of whether the term on the right-hand side of \eqref{kk2} can be further improved, and whether the difference between the left-hand side and right-hand side of \eqref{kk2} can also control suitable norms of the function $u$. Their questions have since garnered significant attention and motivated many researchers to work on refinements of Hardy and Hardy-type inequalities. We refer interested readers to \cite{Abd,Adi0,Adi1,Barb0,Barb0.5,Barb,Bre1,Fil,Gaz0.5,Gki,Psa,Vaz,Wan,Zog}, to name just a few.

\vskip0.1in

Motivated by the work in \cite{Bre1}, a primary objective of this paper is to investigate various improved Hardy-type inequalities. Utilizing a class of simple yet effective transformations, we establish several new sharp Hardy-type inequalities and derive their refinements along with corresponding explicit stability constants. A detailed introduction to our main results is provided in Section \ref{sec3}, and here we offer a brief summary of our findings.

\vskip0.1in
In Subsection \ref{sec3.1}, we survey some classical improved Hardy inequalities from \cite{Adi1,Bre1,Fil,Gaz0.5,Gki,Stu,Zog}. By utilizing suitable transformations, we extend these inequalities to their weighted versions and derive various improved Hardy-Sobolev inequalities. Our method also provides explicit stability constants.

\vskip0.1in
In Subsection \ref{sec3.2}, we examine the $L^p$-logarithmic Sobolev inequality established in \cite{del0,Fuj,Gen,Gro,Weis}. We develop its weighted versions and identify the corresponding sharp constants and extremizers. Additionally, we investigate potential stability results, at least for centrally symmetric functions. Inspired by recent work in \cite{Bal}, we also consider the $L^p$-logarithmic Sobolev inequality involving a log-concave homogeneous weight on convex cones.
\vskip0.1in
In Subsection \ref{sec3.3}, we address the logarithmic Hardy inequality established in \cite{del1}. There are relatively few results concerning this inequality, and we aim to derive results regarding the symmetry and symmetry breaking of its extremizers. Our results complement those previously presented in \cite{Dol0,Dol0.5,Dol2}.

\vskip0.1in
In Subsection \ref{sec3.4}, we focus on the Hardy-Morrey inequality established in \cite{Psa}. This inequality refines the Hardy inequality \eqref{kk1} for $p>n$ by incorporating a sharp H\"older norm of $u$. Here, we obtain a weighted version: the Hardy-Sobolev-Morrey inequality with a H\"older refinement. Moreover, motivated by \cite{Gki}, we derive a series expansion of the Hardy-Sobolev-Morrey inequality.

\vskip0.1in
In Subsection \ref{sec3.5}, we extend some intriguing Hardy-Sobolev interpolation inequalities recently established in \cite{Die} to their weighted versions. These inequalities provide an interesting connection between the optimal Hardy inequality \eqref{kk1} and the critical Sobolev inequality.

\vskip0.1in
In Subsection \ref{sec3.6}, we recover some stability results recently established in \cite{Caz1}, which concern a class of interpolated Caffarelli-Kohn-Nirenberg inequalities. Our method is not only straightforward but also yields simple and explicit stability constants.

\vspace{20pt}
\addcontentsline{toc}{subsection}{Organization of this paper}\noindent$\textbf{Organization of this paper.}$ In Section \ref{sec2}, we focus on the stability of the Caffarelli-Kohn-Nirenberg inequality \eqref{ckn}. In Subsection \ref{sec2.1}, we first introduce a simple yet useful transformation inspired by Horiuchi \cite{Hor}, and then use it to prove Theorem \ref{thm1}, Theorem \ref{thm2}, Theorem \ref{thm3}, and Theorem \ref{thm4}. In Subsection \ref{sec2.2}, based on the stability results eatablished in Subsection \ref{sec2.1}, we derive improved Sobolev-type embeddings with weak Lebesgue norms (Corollary \ref{ccc1}, Corollary \ref{ccc6}, and Corollary \ref{ccc8}). In Subsection \ref{sec2.3}, we apply some careful expansion techniques to establish Theorem \ref{thm5}. Adapting an interpolation technique from \cite{Fig} allows us to derive Corollary \ref{cor}. Finally, in Section \ref{sec3}, we establish various improved Hardy-type inequalities. In each subsection, we introduce one type of Hardy-type inequality, state our main results, and provide the proofs.

\vspace{10pt}
\addcontentsline{toc}{subsection}{Notations}\noindent$\textbf{Notations.}$ In the following proofs, for simplicity, we will consistently use $C$ and $c$ to denote positive quantities depending only on the parameters (for example, $a,b,p,q$) and the dimension $n$. The values of $C$ and $c$ may vary from line to line. Additionally, when referring to a positive variable $A$, we will use the notation $o_A(1)$ to indicate any quantity that tends towards zero as $A$ approaches zero.

\vspace{10pt}
\addcontentsline{toc}{subsection}{Acknowledgement}\noindent$\textbf{Acknowledgement.}$ The authors wish to thank Prof R. Frank for his insightful comments on Corollary \ref{ccc1} in the original version of this paper. We also thank the anonymous referees for their careful reading of the manuscript and for their valuable suggestions, which have significantly improved the paper.
\vskip0.4in

\section{Stability of the Caffarelli-Kohn-Nirenberg inequality}\label{sec2}
This section is devoted to the stability of the Caffarelli-Kohn-Nirenberg inequality \eqref{ckn}. Subsection \ref{sec2.1} focuses on its stability in the functional setting, while Subsection \ref{sec2.3} addresses stability in the critical point setting. Based on the results established in Subsection \ref{sec2.1}, various improved Sobolev-type embeddings are derived in Subsection \ref{sec2.2}.

\subsection{Stability in the functional setting}\label{sec2.1}
 In this subsection, we focus on the functional stability of the Caffarelli-Kohn-Nirenberg inequality \eqref{ckn}. We aim to prove Theorem \ref{thm1}, Theorem \ref{thm2}, Theorem \ref{thm3}, and Theorem \ref{thm4}. Before commencing the proof, let us make some important observations. Assume that $a,b,p,q$ satisfy the relations in \eqref{ckn1} and that $a>0$. For any function $u\in D_a^p(\R^n)$, utilizing polar coordinates, we have:
 \begin{equation}\label{for1}
     \int_{\R^n}|x|^{-pa}|\nabla u|^pdx=\int_{\S^{n-1}}\int_0^{+\infty}\left(|\nabla_ru|^2+r^{-2}|\nabla_\theta u|^2\right)^{\frac{p}{2}}r^{n-1-ap}drd\theta
 \end{equation}
 and
 \begin{equation}\label{for2}
     \int_{\R^n}|x|^{-qb}|u|^qdx=\int_{\S^{n-1}}\int_0^{+\infty}|u|^q r^{n-1-bq}drd\theta.
 \end{equation}
It is straightforward to compute
\begin{equation}\label{for3}
    |\nabla_\theta u|^2=\sum\limits_{1\leq i<j\leq n}\left(x_i\partial_ju-x_j\partial_iu\right)^2.
\end{equation}
Let $k=\frac{n-p}{n-p-ap}$. Consider the new function $\Bar{u}(r\theta):=k^{\frac{1}{q}}u(r^k\theta)$ (this transformation traces back to the work in \cite{Hor}). We observe that the formulas \eqref{for1} and \eqref{for2} transform into:
\begin{equation}\label{for4}
    k^{1-p-\frac{p}{q}}\int_{\S^{n-1}}\int_0^{+\infty}\left(|\nabla_r\Bar{u}|^2+k^2r^{-2}|\nabla_\theta \Bar{u}|^2\right)^{\frac{p}{2}}r^{n-1}drd\theta
\end{equation}
and
\begin{equation}\label{for5}
    \int_{\S^{n-1}}\int_0^{+\infty}|\Bar{u}|^q r^{n-1-(b-a)q}drd\theta= \int_{\R^n}|x|^{-q(b-a)}|\Bar{u}|^qdx
\end{equation}
respectively. Since $a>0$, we have $k>1$. Thus, $\Bar{u}$ is a well-defined function in $D_0^p(\R^n)$. Moreover, it can be easily verified that the correspondence $u\leftrightarrow\Bar{u}$ provides a bijection between $\mathcal{M}_{p,a,b}$ and $\mathcal{M}_{p,0,b-a}$ (when $b=a$, we restrict to radially symmetric functions in $\mathcal{M}_{p,0,0}$ centered at the origin).

In the following, we will use the above transformation to provide the proofs of Theorem \ref{thm1}, Theorem \ref{thm2}, Theorem \ref{thm3}, and Theorem \ref{thm4}. Although some computations in these proofs are quite similar, we choose to present them in detail, not only for the convenience of the readers but also because our proofs serve as prototypes for obtaining sharp constants and extremizers, establishing stability results, and finding explicit relationships between stability constants. Later in Section \ref{sec3}, when encountering similar computations, we will omit certain details.

\begin{proof}[Proof of Theorem \ref{thm1}]
    By homogeneity, we can assume that
    \begin{equation*}
        \int_{\R^n}|x|^{-qb}|u|^qdx=1.
    \end{equation*}
    Using the above transformation, we have
    \begin{align}
            \int_{\R^n}|x|^{-pa}|\nabla u|^pdx=&\,k^{1-p-\frac{p}{q}}\int_{\S^{n-1}}\int_0^{+\infty}\left(|\nabla_r\Bar{u}|^2+k^2r^{-2}|\nabla_\theta \Bar{u}|^2\right)^{\frac{p}{2}}r^{n-1}drd\theta\nonumber\\
            \geq &\,k^{1-p-\frac{p}{q}}\int_{\S^{n-1}}\int_0^{+\infty}\left(|\nabla_r\Bar{u}|^2+r^{-2}|\nabla_\theta \Bar{u}|^2\right)^{\frac{p}{2}}r^{n-1}drd\theta\nonumber\\
            =&\,k^{1-p-\frac{p}{q}}\int_{\R^n}|\nabla \Bar{u}|^pdx\nonumber\\
            \geq&\,k^{1-p-\frac{p}{q}}S^p(p,0,b-a)\left(\int_{\R^n}|x|^{-q(b-a)}|\Bar{u}|^qdx\right)^{\frac{p}{q}}\nonumber\\
            =&\,k^{1-p-\frac{p}{q}}S^p(p,0,b-a)\nonumber\\
            =&\,S^p(p,a,b).\nonumber
    \end{align}
    The first inequality becomes equality precisely when $u$ is radially symmetric about the origin. The second inequality holds as an equality if and only if $\Bar{u}$ belongs to $\mathcal{M}_{p,0,b-a}$ (see, for example, \cite{Gho}). By combining these observations, we can easily conclude our results
\end{proof}

\begin{proof}[Proof of Theorem \ref{thm2}]
    By homogeneity, we can suppose that
    \begin{equation*}
        \int_{\R^n}|x|^{-qb}|u|^qdx=1.
    \end{equation*}
    Using similar arguments as in the previous proof and the stability estimate \eqref{DT} (see \cite{Den1} for detalis), we obtain
        \begin{align}
            \left(\int_{\R^n}|x|^{-pa}|\nabla u|^pdx\right)^{\frac{1}{p}}-S(p,a,b)\geq&\,k^{\frac{1}{p}-1-\frac{1}{q}}\left(\int_{\R^n}|\nabla \Bar{u}|^pdx\right)^{\frac{1}{p}}-S(p,a,b)\nonumber\\
            =&\,k^{\frac{1}{p}-1-\frac{1}{q}}\left(\left(\int_{\R^n}|\nabla \Bar{u}|^pdx\right)^{\frac{1}{p}}-S(p,0,b-a)\right)\nonumber\\
            \geq&\,c\inf_{v\in\mathcal{M}_{p,0,b-a}}\norm*{\nabla \Bar{u}-\nabla v}_{L^p}^{\max\{2,p\}}.\nonumber
        \end{align}
    Since the correspondence $u\leftrightarrow\Bar{u}$ provides a bijection between $\mathcal{M}_{p,0,b-a}$ and $\mathcal{M}_{p,a,b}$, we have
    \begin{align}
            &\inf_{v\in\mathcal{M}_{p,0,b-a}}\norm*{\nabla \Bar{u}-\nabla v}_{L^p}^p\nonumber\\
            &=\,\inf_{v\in\mathcal{M}_{p,0,b-a}}\int_{\S^{n-1}}\int_0^{+\infty}\left(|\nabla_r(\Bar{u}-v)|^2+r^{-2}|\nabla_\theta (\Bar{u}-v)|^2\right)^{\frac{p}{2}}r^{n-1}drd\theta\nonumber\\
            &\geq\,ck^{1-p-\frac{p}{q}}\inf_{v\in\mathcal{M}_{p,0,b-a}}\int_{\S^{n-1}}\int_0^{+\infty}\left(|\nabla_r(\Bar{u}-v)|^2+k^2r^{-2}|\nabla_\theta (\Bar{u}-v)|^2\right)^{\frac{p}{2}}r^{n-1}drd\theta\nonumber\\
            &=\,c\inf_{v\in\mathcal{M}_{p,a,b}}\int_{\S^{n-1}}\int_0^{+\infty}\left(|\nabla_r(u-v)|^2+r^{-2}|\nabla_\theta (u-v)|^2\right)^{\frac{p}{2}}r^{n-1-ap}drd\theta\nonumber\\
            &=\,c\inf_{v\in\mathcal{M}_{p,a,b}}\norm*{|x|^{-a}(\nabla u-\nabla v)}_{L^p}^p.\nonumber
    \end{align}
    Thus, we conclude that
    \begin{align}
        \left(\int_{\R^n}|x|^{-pa}|\nabla u|^pdx\right)^{\frac{1}{p}}-S(p,a,b)\geq c\inf_{v\in\mathcal{M}_{p,a,b}}\norm*{|x|^{-a}(\nabla u-\nabla v)}_{L^p}^{\max\{2,p\}}.\nonumber
    \end{align}
    The proof is complete.
\end{proof}
To prove Theorem \ref{thm3}, we need the following two lemmas. The first lemma is an elementary estimate.
\begin{lemma}\label{lem1}
    Let $\alpha,\beta$ be two positive constants. Then there exists a number $\varepsilon=\varepsilon(\alpha,\beta)$ such that, for any positive numbers $A,B,D,E,F$ with $B\geq \alpha,F\leq \beta,|A-D|+|E-B|\leq \varepsilon F$, one has
    \begin{equation*}
        \frac{A}{B}-1\geq F\quad\Longrightarrow\quad\frac{D}{E}-1\geq\frac{F}{2}.
    \end{equation*}
\end{lemma}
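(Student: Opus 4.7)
The plan is to unwind the assumed inequality $A/B \geq 1+F$ into the additive form $A \geq (1+F)B$, and then transport the lower bound on $A$ to a lower bound on $D$ using $|A-D| \leq \varepsilon F$, while simultaneously transporting the positivity/size of $B$ to an upper bound on $E$ via $|E-B| \leq \varepsilon F$. The hypothesis $B \geq \alpha$ is what gives the ratio $A/B$ some room to absorb an additive loss of order $\varepsilon F$, and the hypothesis $F \leq \beta$ is what keeps the quadratic term $\varepsilon F^2$ controlled by $\varepsilon F$.

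Concretely, I would proceed as follows. From $A \geq (1+F)B$ and $|A-D| \leq \varepsilon F$, one gets $D \geq (1+F)B - \varepsilon F$. From $|E-B|\leq \varepsilon F$ one gets $E \leq B + \varepsilon F$, so
\begin{equation*}
\left(1+\tfrac{F}{2}\right) E \;\leq\; \left(1+\tfrac{F}{2}\right)(B+\varepsilon F) \;=\; B + \tfrac{BF}{2} + \varepsilon F + \tfrac{\varepsilon F^2}{2}.
\end{equation*}
Comparing this with the lower bound on $D$, the desired inequality $D \geq (1+F/2)E$ reduces to
\begin{equation*}
\tfrac{BF}{2} \;\geq\; 2\varepsilon F + \tfrac{\varepsilon F^2}{2},
\end{equation*}
and dividing by $F>0$ this is equivalent to $B \geq \varepsilon(4+F)$. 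Since $B \geq \alpha$ and $F \leq \beta$, it suffices to pick any $\varepsilon \leq \alpha/(4+\beta)$.

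One small matter to address at the start of the proof is that $E$ must remain positive for the ratio $D/E$ to be meaningful, but this is automatic once $\varepsilon < \alpha/\beta$ since $E \geq B - \varepsilon F \geq \alpha - \varepsilon\beta > 0$; the choice $\varepsilon = \alpha/(4+\beta)$ already satisfies this. There is no real obstacle here — the entire argument is a two-line additive perturbation — so the only thing to be careful about is stating the explicit dependence $\varepsilon(\alpha,\beta) = \alpha/(4+\beta)$ (or any smaller value) so that later applications of the lemma in the proofs of Theorem \ref{thm3} and Theorem \ref{thm4} can invoke a clean quantitative bound.
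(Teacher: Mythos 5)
Your proof is correct and follows essentially the same route as the paper: transfer the multiplicative hypothesis $A \ge (1+F)B$ to $D/E$ by bounding $D$ from below and $E$ from above via the additive perturbations, using $B\ge\alpha$, $F\le\beta$ to absorb the error. The only difference is cosmetic: you bound $|A-D|$ and $|E-B|$ each by $\varepsilon F$ separately, which costs a factor of two and yields $\varepsilon = \alpha/(4+\beta)$, whereas the paper keeps the sum $|A-D|+|B-E|\le\varepsilon F$ intact and arrives at the slightly larger $\varepsilon = \alpha/(\beta+2)$; since the lemma only asserts existence of $\varepsilon$, both are fine.
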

\begin{proof}
    Since $A\geq B(1+F)$, we have
    \begin{equation*}
        \frac{D}{E}\geq \frac{(1+F)B-|A-D|}{B+|B-E|}.
    \end{equation*}
    Therefore,
    \begin{equation*}
        \frac{D}{E}-1\geq\frac{F}{2}\quad\Longleftrightarrow\quad\frac{1}{2}FB\geq (|A-D|+|B-E|)\left(1+\frac{F}{2}\right).
    \end{equation*}
    We just need to take $\varepsilon=\frac{\alpha}{\beta+2}$.
\end{proof}
The second lemma is an integral estimate, which enables us to deal with the translation-invariant property of $\mathcal{M}_{p,0,0}$.
\begin{lemma}\label{lem2}
    Given $k>1$. Then there exists a positive number $c$ such that, for any nontrivial function $U\in \mathcal{M}_{p,0,0}$ centered at the origin and vector $\rho\in\R^n$, it holds that
    \begin{equation*}
        \begin{aligned}
            &\frac{1}{\norm*{U}_{L^{\frac{np}{n-p}}}}\left(\int_{\S^{n-1}}\int_0^{+\infty}\left(|\nabla_rU(\cdot+\rho)|^2+k^2r^{-2}|\nabla_\theta U(\cdot+\rho)|^2\right)^{\frac{p}{2}}r^{n-1}drd\theta\right)^\frac{1}{p}-S(p,0,0)\\
            \geq&\,c\left(\frac{\norm*{\nabla U-\nabla U(\cdot+\rho)}_{L^p}}{\norm*{U}_{L^{\frac{np}{n-p}}}}\right)^2.
        \end{aligned}
    \end{equation*}
\end{lemma}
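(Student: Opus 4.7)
The strategy I would use is as follows. First, by the scaling $U(x)\mapsto \lambda^{(n-p)/p}U(\lambda x)$, under which both sides of the claimed inequality have degree zero with $\rho$ rescaled by $\lambda$, one may normalize $U$ to the standard Talenti bubble $U_0(x)=(1+|x|^{p/(p-1)})^{1-n/p}$ and then let $\rho$ range freely over $\R^n$. Writing $f_\rho:=U_0(\cdot+\rho)$ and using the polar identity
\[
|\partial_r f|^2+k^2 r^{-2}|\nabla_\theta f|^2 \;=\; |\nabla f|^2+(k^2-1)r^{-2}|\nabla_\theta f|^2,
\]
together with the translation invariance $\norm{\nabla f_\rho}_{L^p}=\norm{\nabla U_0}_{L^p}=S(p,0,0)\norm{U_0}_{L^q}$, the left-hand side of the lemma captures exactly the positive excess of $I_k(f_\rho)^{1/p}$ over $\norm{\nabla f_\rho}_{L^p}$ coming from the tangential gradient of $f_\rho$ relative to the origin; this excess vanishes only at $\rho=0$ because $U_0$ is radial. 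I would split the analysis into a near regime $|\rho|\le\rho_0$, a compact annulus $\rho_0\le|\rho|\le R_0$, and a far regime $|\rho|\ge R_0$.

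In the near regime, the key pointwise input is the convexity inequality $(A+B)^{p/2}\ge A^{p/2}+(p/2)A^{(p-2)/2}B$ for $p\ge 2$, together with the similar (slightly weaker but still positive) bound $(A+B)^{p/2}-A^{p/2}\ge (p/2)B(A+B)^{(p-2)/2}$ that is valid for $1<p<2$ by convexity of $1-x^{p/2}$ on $[0,1]$. Applied with $A=|\nabla f_\rho|^2$ and $B=(k^2-1)r^{-2}|\nabla_\theta f_\rho|^2$, integration produces a lower bound of the form $I_k(f_\rho)-I_1(f_\rho)\ge c\int|\nabla U_0|^{p-2}r^{-2}|\nabla_\theta f_\rho|^2\,dx$ up to a lower-order error. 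A first-order Taylor expansion $\nabla f_\rho-\nabla U_0\approx(\rho\cdot\nabla)\nabla U_0$ combined with radiality of $U_0$ shows that the tangential component of $\nabla U_0(x+\rho)$ at $x$ equals $(U_0'(|x|)/|x|)|\rho_\perp|+O(|\rho|^2)$, where $\rho_\perp$ is the projection of $\rho$ orthogonal to $x$. This leads to $I_k(f_\rho)-I_1(f_\rho)\ge c|\rho|^2+o(|\rho|^2)$, with the constant given up to harmless factors by the convergent integral $\int_0^\infty |U_0'(r)|^p r^{n-3}\,dr$. On the right, the same expansion yields $\norm{\nabla U_0-\nabla f_\rho}_{L^p}^2=O(|\rho|^2)$ via finiteness of $\norm{D^2 U_0}_{L^p}$ (a direct decay check on the explicit bubble, valid for all $1<p<n$); taking $p$-th roots on the left then matches the orders.

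In the far regime $|\rho|\ge R_0$, $f_\rho$ is concentrated near $-\rho$: on the ball $\{|x+\rho|\le 1\}$ the radial direction $x/|x|$ is essentially the fixed unit vector $-\rho/|\rho|$, and decomposing $\nabla U_0(z)=U_0'(|z|)\hat z$ into components parallel and perpendicular to that direction shows that a fraction $(1-1/n)$ of the gradient energy there is tangential to the sphere of radius $|x|$. The same convexity/concavity step then yields $I_k(f_\rho)\ge(1+\delta)\norm{\nabla U_0}_{L^p}^p$ for some $\delta=\delta(k,p,n)>0$, so the left-hand side is bounded below by a positive constant while the right-hand side is bounded above by $4S(p,0,0)^2$; the inequality holds uniformly for large $|\rho|$. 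On the compact annulus $\rho_0\le|\rho|\le R_0$ both sides are positive continuous functions of $\rho$, so continuity yields a positive infimum of their ratio, and the final $c$ is the minimum over the three regions.

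The main obstacle I anticipate is the small-$\rho$ analysis when $p\ne 2$. For $p=2$ one has the clean identity $I_k(f_\rho)-I_1(f_\rho)=(k^2-1)\int r^{-2}|\nabla_\theta f_\rho|^2\,dx$ and the computation reduces to an explicit integral. For general $p$ the correct weight in the leading-order lower bound is $|U_0'|^p r^{n-3}$ rather than $(U_0')^2 r^{n-3}$ --- the latter actually diverges at infinity for some admissible pairs $(n,p)$, whereas the former is always integrable on $(0,\infty)$ --- and one must carry a careful remainder argument to ensure that the higher-order corrections are genuinely $o(|\rho|^2)$ after integration, together with a regional split handling the neighborhood of the bubble centre where $|\nabla U_0|$ degenerates. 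All such verifications reduce to direct decay and regularity checks on the explicit bubble, which are tedious but routine.
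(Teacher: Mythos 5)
Your proposal is correct and follows essentially the same route as the paper: the paper reduces to small $|\rho|$ by invoking concentration compactness (absorbing your intermediate and far regimes), then shows both sides are comparable to $|\rho|^2$ via the mean value theorem and a uniform pointwise control of the tangential-gradient contribution, which matches your convexity inequality plus Taylor expansion. Your observation that the leading weight must be $|U_0'|^p r^{n-3}$ rather than $(U_0')^2 r^{n-3}$ is a genuine subtlety for $p\neq 2$ that the paper handles implicitly by retaining the factor $\xi^{(p-2)/2}$ from the mean value theorem rather than prematurely freezing it.
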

\begin{proof}
    By homogeneity and dilation invariance, we can assume that
    \begin{equation*}
        U(x)=L\left(1+|x|^{\frac{p}{p-1}}\right)^{\frac{p-n}{p}},
    \end{equation*}
    where $L$ is a suitable constant such that $\norm*{U}_{L^{\frac{np}{n-p}}}=1$. For any fixed $\varepsilon>0$, if $|\rho|\geq \varepsilon$, it follows that the difference on the left side is bounded away from $0$. Since the term $\norm*{\nabla U-\nabla U(\cdot+\rho)}_{L^p}$ is also bounded above, the existence of the number $c$ is straightforward. Therefore, we only need to focus on the case when $|\rho|$ is small.

    In this scenario, we claim that the values on both sides are comparable to $|\rho|^2$. To demonstrate this, we can first apply the mean value theorem to the right-hand side. For the left-hand side, we need to estimate the difference between
    \begin{equation*}
        \left(\int_{\S^{n-1}}\int_0^{+\infty}\left(|\nabla_rU(\cdot+\rho)|^2+k^2r^{-2}|\nabla_\theta U(\cdot+\rho)|^2\right)^{\frac{p}{2}}r^{n-1}drd\theta\right)^\frac{1}{p}
    \end{equation*}
    and
    \begin{equation*}
        \left(\int_{\S^{n-1}}\int_0^{+\infty}\left(|\nabla_rU(\cdot+\rho)|^2+r^{-2}|\nabla_\theta U(\cdot+\rho)|^2\right)^{\frac{p}{2}}r^{n-1}drd\theta\right)^\frac{1}{p}.
    \end{equation*}
    Note that the values of these two integrals fall within the interval $[S(p,0,0),kS(p,0,0)]$, we can apply the basic inequality:
    \begin{equation*}
        c(x^p-y^p)\leq x-y\leq C(x^p-y^p)\quad \text{for any }S(p,0,0)\leq y\leq x\leq kS(p,0,0).
    \end{equation*}
    Now, it suffices to estimate
    \begin{equation*}
        \left(|\nabla_rU(\cdot+\rho)|^2+k^2r^{-2}|\nabla_\theta U(\cdot+\rho)|^2\right)^{\frac{p}{2}}-\left(|\nabla_rU(\cdot+\rho)|^2+r^{-2}|\nabla_\theta U(\cdot+\rho)|^2\right)^{\frac{p}{2}}
    \end{equation*}
    pointwise. Without loss of generality, we can assume $\rho=(|\rho|,0,\cdots,0)$. From \eqref{for3}, we have
    \begin{align}
         |\nabla_\theta U(x+\rho)|^2=&\sum\limits_{j=2}^n(x_1\partial_jU-x_j\partial_1U)^2\nonumber\\
         =&\,C|\rho|^2\sum\limits_{j=2}^n|x_j|^2|x+\rho|^{\frac{4-2p}{p-1}}\left(1+|x+\rho|^{\frac{p}{p-1}}\right)^{-\frac{2n}{p}}.\nonumber
    \end{align}
    Now our can apply the mean value theorem again to deduce the claim.
\end{proof}

\begin{proof}[Proof of Theorem \ref{thm3}]
    By homogeneity, we can assume that
    \begin{equation*}
        \int_{\R^n}|x|^{-qa}|u|^qdx=1.
    \end{equation*}
    As in the proof of Theorem \ref{thm2}, we obtain
    \begin{equation*}
         \left(\int_{\R^n}|x|^{-pa}|\nabla u|^pdx\right)^{\frac{1}{p}}-S(p,a,a)\geq c\inf_{v\in\mathcal{M}_{p,0,0}}\norm*{\nabla \Bar{u}-\nabla v}_{L^p}^{\max\{2,p\}}.
    \end{equation*}
    Note that we cannot transform $\Bar{u}$ back to $u$ directly because $\mathcal{M}_{p,0,0}$ is translation-invariant while $\M_{p,a,a}$ is not. We define $\mathcal{M}^R_{p,0,0}$ to be the subset of $\mathcal{M}_{p,0,0}$ consisting of all radially symmetric functions centered at the origin.

    When the function $u$ is centrally symmetric about the origin, we can show that
    \begin{equation*}
        \inf_{v\in\mathcal{M}_{p,0,0}}\norm*{\nabla \Bar{u}-\nabla v}_{L^p}\geq c\inf_{v\in\mathcal{M}^R_{p,0,0}}\norm*{\nabla \Bar{u}-\nabla v}_{L^p}.
    \end{equation*}
    Specifically, let $v_0\in\mathcal{M}_{p,0,0}^R$ and $x_0\in\R^n$ such that
    \begin{equation*}
        \inf_{v\in\mathcal{M}_{p,0,0}}\norm*{\nabla \Bar{u}-\nabla v}_{L^p}=\norm*{\nabla \Bar{u}-\nabla v_0(\cdot+x_0)}_{L^p}.
    \end{equation*}
    Then we have:
    \begin{equation*}
        \begin{aligned}
            \norm*{\nabla \Bar{u}-\nabla v_0}_{L^p}\leq&\, \norm*{\nabla \Bar{u}-\nabla v_0(\cdot+x_0)}_{L^p}+\norm*{\nabla v_0(\cdot+x_0)-\nabla v_0}_{L^p}\\
            \leq&\,\norm*{\nabla \Bar{u}-\nabla v_0(\cdot+x_0)}_{L^p}+C\norm*{\nabla v_0(\cdot+x_0)-\nabla v_0(\cdot-x_0)}_{L^p}\\
            \leq&\,C\norm*{\nabla \Bar{u}-\nabla v_0(\cdot+x_0)}_{L^p}+
            C\norm*{\nabla \Bar{u}-\nabla v_0(\cdot-x_0)}_{L^p}\\
            \leq&\,C\norm*{\nabla \Bar{u}-\nabla v_0(\cdot+x_0)}_{L^p}.
        \end{aligned}
    \end{equation*}
    The second inequality above follows from the mean value theorem. Therefore, \eqref{sta2} holds with the optimal exponent $\max\{2,p\}$.

    For a general function $u$, we need to use some careful arguments. Fix a number $0<\varepsilon\ll 1$ which will be determined later. From the concentration compactness principle (see for example \cite{Hor,Lio}), there exists a positive number $\delta<1$ such that, when $\norm*{\nabla \Bar{u}}_{L^p}<(1+\delta)S(p,0,0)$, we have
    \begin{equation*}
        \inf_{v\in\mathcal{M}_{p,0,0}}\norm*{\nabla \Bar{u}-\nabla v}_{L^p}\leq \varepsilon.
    \end{equation*}
    Note that if $\norm*{\nabla \Bar{u}}_{L^p}\geq(1+\delta)S(p,0,0)$, from the transformation we see that
    \begin{align}
        \norm*{u}_{D_a^p}\geq (1+\delta)S(p,a,a),\nonumber
    \end{align}
    which directly leads to the conclusion in \eqref{sta2}. Therefore, to prove \eqref{sta2}, it suffices to consider the case $\norm*{\nabla \Bar{u}}_{L^p}<(1+\delta)S(p,0,0)$. Now, if
    \begin{equation*}
        \inf_{v\in\mathcal{M}_{p,0,0}}\norm*{\nabla \Bar{u}-\nabla v}_{L^p}\geq \varepsilon\inf_{v\in\mathcal{M}^R_{p,0,0}}\norm*{\nabla \Bar{u}-\nabla v}_{L^p}^2,
    \end{equation*}
    then \eqref{sta2} holds. If not, we can find a function $U\in\mathcal{M}^R_{p,0,0}$ and a vector $\rho\in\R^n$ such that
    \begin{equation*}
        \norm*{\nabla \Bar{u}-\nabla U(\cdot+\rho)}_{L^p}=\inf_{v\in\mathcal{M}_{p,0,0}}\norm*{\nabla \Bar{u}-\nabla v}_{L^p}\leq \varepsilon
    \end{equation*}
    and
    \begin{equation*}
        \norm*{\nabla \Bar{u}-\nabla U(\cdot+\rho)}_{L^p}\leq \varepsilon\norm*{\nabla \Bar{u}-\nabla U}_{L^p}^2.
    \end{equation*}
    Since $\varepsilon$ is small, we have
    \begin{equation*}
        \norm*{\nabla \Bar{u}-\nabla U(\cdot+\rho)}_{L^p}\leq 2\varepsilon\norm*{\nabla U(\cdot+\rho)-\nabla U}_{L^p}^2,
    \end{equation*}
    and
    \begin{equation*}
        \frac{1}{2}\norm*{\nabla \Bar{u}-\nabla U}_{L^p}\leq \norm*{\nabla U(\cdot+\rho)-\nabla U}_{L^p}.
    \end{equation*}
    Now let us apply Lemma \ref{lem1} and Lemma \ref{lem2} with
    \begin{align}
            &A=\left(\int_{\S^{n-1}}\int_0^{+\infty}\left(|\nabla_rU(\cdot+\rho)|^2+k^2r^{-2}|\nabla_\theta U(\cdot+\rho)|^2\right)^{\frac{p}{2}}r^{n-1}drd\theta\right)^\frac{1}{p},\nonumber\\
            &B=S(p,0,0)\norm*{U}_{L^{\frac{np}{n-p}}},\nonumber\\
            &D=\left(\int_{\S^{n-1}}\int_0^{+\infty}\left(|\nabla_r\Bar{u}|^2+k^2r^{-2}|\nabla_\theta \Bar{u}|^2\right)^{\frac{p}{2}}r^{n-1}drd\theta\right)^\frac{1}{p},\nonumber\\
            &E=S(p,0,0)\norm*{\Bar{u}}_{L^{\frac{np}{n-p}}}=S(p,0,0),\nonumber\\
            &F=c\left(\frac{\norm*{\nabla U-\nabla U(\cdot+\rho)}_{L^p}}{\norm*{U}_{L^{\frac{np}{n-p}}}}\right)^2,\nonumber
    \end{align}
    provided $\varepsilon$ is sufficiently small. Note that $|A-D|+|B-E|$ can be estimated directly by the Minkowski inequality. Therefore, we obtain
    \begin{align}
        \left(\int_{\R^n}|x|^{-pa}|\nabla u|^pdx\right)^{\frac{1}{p}}-S(p,a,a)=&\,c\left(\frac{D}{E}-1\right)
        \geq\,cF\nonumber\\
        \geq& \,c\norm*{\nabla U(\cdot+\rho)-\nabla U}_{L^p}^2
        \geq\, c\norm*{\nabla \Bar{u}-\nabla U}_{L^p}^2\nonumber\\
        \geq&\, c\inf_{v\in\mathcal{M}_{p,a,a}}\norm*{|x|^{-a}(\nabla u-\nabla v)}_{L^p}^2\nonumber\\
        \geq&\,c\inf_{v\in\mathcal{M}_{p,a,a}}\norm*{|x|^{-a}(\nabla u-\nabla v)}_{L^p}^{\max\{4,2p\}}.\nonumber
    \end{align}
    The proof is complete.
\end{proof}

\begin{proof}[Proof of Theorem \ref{thm4}]
    $(i)$ Without loss of generality, we assume
    \begin{equation*}
        \limsup_{l\rightarrow+\infty}K(p_l,a_l,b_l)=\lim_{l\rightarrow+\infty}K(p_l,a_l,b_l)=:K_0.
    \end{equation*}
    We only need to consider the case $K_0>0$. For any nonzero function $u\in C_c^{\infty}(\R^n)$, by definition, we have
    \begin{equation*}
        \frac{\norm*{|x|^{-a_l}\nabla u}_{L^{p_l}}}{\norm*{|x|^{-b_l}u}_{L^{q_l}}}-S(p_l,a_l,b_l)\geq K(p_l,a_l,b_l)\inf_{v\in\mathcal{M}_{p_l,a_l,b_l}}\left(\frac{\norm*{|x|^{-a_l}(\nabla u-\nabla v)}_{L^{p_l}}}{\norm*{|x|^{-a_l}\nabla u}_{L^{p_l}}}\right)^{\max\{2,{p_l}\}}
    \end{equation*}
    Let $v_l\in \mathcal{M}_{p_l,a_l,b_l}$ be such that
    \begin{equation*}
        \norm*{|x|^{-a_l}(\nabla u-\nabla v_l)}_{L^{p_l}}=\inf_{v\in\mathcal{M}_{p_l,a_l,b_l}}\norm*{|x|^{-a_l}(\nabla u-\nabla v)}_{L^{p_l}}.
    \end{equation*}
    Then we can deduce
    \begin{equation*}
        \frac{\norm*{|x|^{-a_l}\nabla u}_{L^{p_l}}}{\norm*{|x|^{-b_l}u}_{L^{q_l}}}-S(p_l,a_l,b_l)\geq K(p_l,a_l,b_l)\left(\frac{\norm*{|x|^{-a_l}(\nabla u-\nabla v_l)}_{L^{p_l}}}{\norm*{|x|^{-a_l}\nabla u}_{L^{p_l}}}\right)^{\max\{2,{p_l}\}}.
    \end{equation*}
    Since $v_l$ takes the form given in \eqref{ckn3} and $u$ is fixed, there exists a subsequence of $\{v_l\}$, still denoted by $\{v_l\}$, such that $v_l\rightarrow w$ and $\nabla v_l\rightarrow \nabla w$ almost everywhere in $\R^n$ for some $w$ belongs to $\mathcal{M}_{p,a,b}$. Now, letting $l\rightarrow+\infty$, by continuity and the Fatou's lemma, we obtain
    \begin{equation*}
    \begin{aligned}
        \frac{\norm*{|x|^{-a}\nabla u}_{L^p}}{\norm*{|x|^{-b}u}_{L^q}}-S(p,a,b)\geq&\,K_0\left(\frac{\norm*{|x|^{-a}(\nabla u-\nabla w)}_{L^p}}{\norm*{|x|^{-a}\nabla u}_{L^p}}\right)^{\max\{2,p\}}\\
        \geq&\, K_0\inf_{v\in\mathcal{M}_{p,a,b}}\left(\frac{\norm*{|x|^{-a}(\nabla u-\nabla v)}_{L^p}}{\norm*{|x|^{-a}\nabla u}_{L^p}}\right)^{\max\{2,p\}}.
    \end{aligned}
    \end{equation*}
    From a simple density argument, we see that the above estimate holds for any $u\in D_a^p(\R^n)$. Therefore, we conclude that $K(p,a,b)\geq K_0$.

    $(ii)$ Set $h=\frac{n-p-a_1p}{n-p-a_2p}>1$. For any $u\in D_{a_2}^p(\R^n)$, we define $\hat{u}(r\theta):=h^{\frac{1}{q}}u(r^h\theta)$. Following similar arguments as in equations \eqref{for1}-\eqref{for5}, we have:
    \begin{align}
            \int_{\R^n}|x|^{-pa_2}|\nabla u|^pdx=&\int_{\S^{n-1}}\int_0^{+\infty}\left(|\nabla_ru|^2+r^{-2}|\nabla_\theta u|^2\right)^{\frac{p}{2}}r^{n-1-a_2p}drd\theta\nonumber\\
            =&\,h^{1-p-\frac{p}{q}}\int_{\S^{n-1}}\int_0^{+\infty}\left(|\nabla_r\hat{u}|^2+h^2r^{-2}|\nabla_\theta \hat{u}|^2\right)^{\frac{p}{2}}r^{n-1-a_1p}drd\theta\nonumber\\
            \geq&\,h^{1-p-\frac{p}{q}}\int_{\S^{n-1}}\int_0^{+\infty}\left(|\nabla_r\hat{u}|^2+r^{-2}|\nabla_\theta \hat{u}|^2\right)^{\frac{p}{2}}r^{n-1-a_1p}drd\theta\nonumber\\
            =&\,h^{1-p-\frac{p}{q}}\int_{\R^n}|x|^{-pa_1}|\nabla \hat{u}|^pdx,\nonumber
    \end{align}
    and
    \begin{align}
        \int_{\R^n}|x|^{-qb_2}|u|^qdx=&\int_{\S^{n-1}}\int_0^{+\infty}|u|^q r^{n-1-b_2q}drd\theta\nonumber\\
        =&\int_{\S^{n-1}}\int_0^{+\infty}|\hat{u}|^q r^{n-1-b_1q}drd\theta\nonumber\\
        =&\int_{\R^n}|x|^{-qb_1}|\hat{u}|^qdx.\nonumber
    \end{align}
    Therefore, for any $u\in D_{a_2}(\R^n)$ with
    \begin{equation*}
        \int_{\R^n}|x|^{-qb_2}|u|^qdx=1,
    \end{equation*}
    we can compute
    \begin{align}\label{qqq}
            &\left(\int_{\R^n}|x|^{-pa_2}|\nabla u|^pdx\right)^{\frac{1}{p}}-S(p,a_2,b_2)\nonumber\\\geq&\,h^{\frac{1}{p}-1-\frac{1}{q}}\left(\left(\int_{\R^n}|x|^{-pa_1}|\nabla \hat{u}|^pdx\right)^{\frac{1}{p}}-S(p,a_1,b_1)\right)\nonumber\\
            \geq&\,h^{\frac{1}{p}-1-\frac{1}{q}}K(p,a_1,b_1)\inf_{v\in\mathcal{M}_{p,a_1,b_1}}\norm*{ \hat{u}-v}_{D_{a_1}^p}^{\max\{2,p\}}\nonumber\\
            \geq&\,h^{\frac{1}{p}-1-\frac{1}{q}}K(p,a_1,b_1)\cdot h^{\left(\frac{1}{q}-\frac{1}{p}\right)\max\{2,p\}}\inf_{v\in\mathcal{M}_{p,a_2,b_2}}\norm*{ u-v}_{D_{a_2}^p}^{\max\{2,p\}}\nonumber\\
            =&\,h^{-1-\max\{1,p-1\}\frac{\gamma}{n}}K(p,a_1,b_1)\inf_{v\in\mathcal{M}_{p,a_2,b_2}}\norm*{ u-v}_{D_{a_2}^p}^{\max\{2,p\}}.
        \end{align}
    Thus, we obtain
    \begin{equation*}
        K(p,a_2,b_2)\geq h^{-1-\max\{1,p-1\}\frac{\gamma}{n}}K(p,a_1,b_1),
    \end{equation*}
    which is equivalent to \eqref{sta4}. Moreover, it is easy to see that the first and the third inequalities in \eqref{qqq} cannot be equal simultaneously. Therefore, if $K(p,a_2,b_2)$ is achieved by some function, the estimate \eqref{sta4} is strict.
\end{proof}

\subsection{Improved Sobolev-type embeddings}\label{sec2.2}
In this subsection, we focus on Corollary \ref{ccc1}, Corollary \ref{ccc6}, and Corollary \ref{ccc8}. The basic idea is to combine the stability results established in the previous subsection with some integral estimates.
\begin{proof}[Proof of Corollary \ref{ccc1}]
    Due to the scaling-invariance of \eqref{ccc2}, it suffices to consider the case $|\Omega|=1$. By homogeneity, we assume that $\norm*{|x|^{-a}\nabla u}_{L^p(\Omega)}=1$. Let $\rho$ be a small positive constant that depends only on $a,b,p,q,n$ and will be determined later. Since $|\Omega|=1$, by the Hardy-Sobolev and H\"older inequalities, the norm $\norm*{|x|^{-a}u}_{L_w^{p_1}(\Omega)}$ is bounded above. If
    \begin{equation*}
        \inf_{v\in\mathcal{M}_{p,a,b}}\norm*{u-v}_{D_a^p(\R^n)}\geq \rho,
    \end{equation*}
    then \eqref{ccc2} follows directly from the gradient stability. If not, suppose
    \begin{equation*}
        \inf_{v\in\mathcal{M}_{p,a,b}}\norm*{u-v}_{D_a^p(\R^n)}=\norm*{u-AV[\lambda,x_0]}_{D_a^p(\R^n)}\leq\rho,
    \end{equation*}
    where $A\in\R,\lambda>0,x_0\in\R^n$ (if $(a,b)\neq (0,0)$, then $x_0\equiv 0$) and $V[\lambda,x_0](x):=B\lambda^{\frac{n-p-pa}{p}}\Big(1+|\lambda (x-x_0)|^{\frac{p\gamma(n-p-pa)}{(p-1)(n-p\gamma)}}\Big)^{1-\frac{n}{p\gamma}}$. Here, $B$ is a fixed normalization factor such that $\norm*{V[\lambda,x_0]}_{D_a^p(\R^n)}=1$. From the Minkowski inequality, we have $|A-1|\leq \rho$. Therefore, we obtain
    \begin{equation*}
        \rho\geq \norm*{u-AV[\lambda,x_0]}_{D_a^p(\R^n)}\geq \norm*{AV[\lambda,x_0]}_{D_a^p(\R^n\backslash\Omega)}\geq (1-\rho)\norm*{V[\lambda,x_0]}_{D_a^p(\R^n\backslash\Omega)}.
    \end{equation*}
    Let us make two simple observations without proofs:

    $(i)$ There exists a nonincreasing function $\Lambda:(0,+\infty)\rightarrow [0,+\infty)$ such that $\Lambda(t)\rightarrow +\infty$ as $t\rightarrow 0$ and for any domain $\Omega$ with $|\Omega|=1$, if
    \begin{equation*}
        \left(\int_{\R^n\backslash\Omega}|x|^{-pa}|\nabla V[\lambda,x_0]|^pdx\right)^{\frac{1}{p}}\leq \rho_0
    \end{equation*}
    holds for some $\rho_0>0,\lambda>0,x_0\in\R^n$ (if $a\neq 0$, let $x_0=0$), then $\lambda\geq \Lambda(\rho_0).$

    $(ii)$ There exist two positive constants $\lambda_0$ and $c_0$ such that for any $\lambda>\lambda_0,x_0\in\R^n$ (if $a\neq 0$, let $x_0=0$) and any domain $\Omega$ with $|\Omega|=1$, it holds that
    \begin{equation*}
        \left(\int_{\R^n\backslash\Omega}|x|^{-pa}|\nabla V[\lambda,x_0]|^pdx\right)^{\frac{1}{p}}\geq c_0\lambda^{-\frac{n-p-pa}{p(p-1)}}.
    \end{equation*}
    Note that a direct computation yields
    \begin{equation*}
        \norm*{|x|^{-a}V[\lambda,x_0]}_{L_w^{p_1}(\R^n)}\leq C\lambda^{-\frac{n-p-pa}{p(p-1)}}.
    \end{equation*}
    Thus, if we set $\rho$ sufficiently small, it holds that
    \begin{equation*}
        \norm*{|x|^{-a}V[\lambda,x_0]}_{L_w^{p_1}(\R^n)}\leq C\norm*{u-AV[\lambda,x_0]}_{D_a^p(\R^n)}
    \end{equation*}
    Now we can easily deduce
    \begin{equation*}
        \begin{aligned}
            \norm*{|x|^{-a}u}_{L_w^{p_1}(\Omega)}\leq &\,\norm*{|x|^{-a}AV[\lambda,x_0]}_{L_w^{p_1}(\R^n)}+\norm*{|x|^{-a}(u-AV[\lambda,x_0])}_{L_w^{p_1}(\Omega)}\\
            \leq&\,C\norm*{u-AV[\lambda,x_0]}_{D_a^p(\R^n)}+C\norm*{|x|^{-a}(u-AV[\lambda,x_0])}_{L^{\frac{np}{n-p}}(\Omega)}\\
            \leq &\,C\norm*{u-AV[\lambda,x_0]}_{D_a^{p}(\R^n)}.
        \end{aligned}
    \end{equation*}
    Therefore, \eqref{ccc2} follows from the gradient stability. The result \eqref{ccc3} can be obtained in a similar manner.
\end{proof}
The proofs of Corollaries \ref{ccc6} and \ref{ccc8} rely on the following lemma about the projection of $D_a^p(\R^n)$ to $D_a^p(\Omega)$.
\begin{lemma}\label{lll}
    Assume $a,b,p,q$ satisfy the relations in \eqref{ckn1}. Let $\Omega\subset\R^n$ and $P:D_a^p(\R^n)\rightarrow D_a^p(\Omega)$ be the projection operator. Then for any positive function $V\in\mathcal{M}_{p,a,b}$, one has $0\leq PV\leq V$ in $\R^n$.
\end{lemma}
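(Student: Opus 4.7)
The plan is to exploit the variational characterization of $PV$: since the $D_a^p$-norm is strictly convex (as $p>1$), $PV$ is the unique minimizer of $\|u-V\|_{D_a^p}$ over $u\in D_a^p(\Omega)$. To establish each of the two bounds, I will exhibit an admissible competitor built from $PV$ by a truncation and show that it has $D_a^p$-distance to $V$ no larger than $\|PV-V\|_{D_a^p}$; uniqueness of the minimizer then forces the competitor to coincide with $PV$.

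For the upper bound $PV\leq V$, the competitor is $u:=\min(PV,V)$. Since $V\geq 0$ and $PV=0$ outside $\Omega$, one has $u=0$ outside $\Omega$, and a standard lattice argument places $u$ in $D_a^p(\Omega)$. A direct pointwise split gives $\nabla(V-u)=\nabla(V-PV)$ on $\{PV\leq V\}$ and $\nabla(V-u)=0$ on $\{PV>V\}$, so $|\nabla(V-u)|\leq|\nabla(V-PV)|$ a.e., hence $\|u-V\|_{D_a^p}\leq\|PV-V\|_{D_a^p}$, which forces $u=PV$.

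For the lower bound $PV\geq 0$, the natural competitor is $u:=(PV)_+\in D_a^p(\Omega)$, but the naive pointwise argument fails: on $\{PV<0\}$ the gradient $\nabla PV$ can be oriented badly relative to $\nabla V$. Writing $w:=PV$, one computes
\begin{equation*}
\|V-u\|_{D_a^p}^p-\|V-w\|_{D_a^p}^p=\int_{\{w<0\}}|x|^{-pa}\bigl(|\nabla V|^p-|\nabla V-\nabla w|^p\bigr)\,dx.
\end{equation*}
The idea is to bound the integrand via convexity of $X\mapsto|X|^p$, which yields
\begin{equation*}
|\nabla V|^p-|\nabla V-\nabla w|^p\leq p|\nabla V|^{p-2}\nabla V\cdot\nabla w,
\end{equation*}
and then to invoke the Euler--Lagrange equation for $V\in\mathcal{M}_{p,a,b}$ (a Lagrange-multiplier form of \eqref{el} reading $\div(|x|^{-pa}|\nabla V|^{p-2}\nabla V)+c|x|^{-qb}V^{q-1}=0$ for some $c>0$). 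Testing this equation against $w_-\in D_a^p(\R^n)$, which is nonnegative, produces
\begin{equation*}
\int_{\{w<0\}}|x|^{-pa}|\nabla V|^{p-2}\nabla V\cdot\nabla w\,dx\leq 0,
\end{equation*}
and combining the two displays gives $\|u-V\|_{D_a^p}\leq\|w-V\|_{D_a^p}$, so $u=w$ and $PV\geq 0$.

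The only mild obstacle is verifying that $(PV)_+$ and $\min(PV,V)$ genuinely lie in $D_a^p(\Omega)$; this is standard weighted-Sobolev lattice closure, obtained by approximating $PV$ by $C_c^\infty(\Omega)$-functions and applying a truncation-and-mollification argument. The principal conceptual point is that the failure of pointwise monotonicity in the nonnegativity step is repaired precisely by the weighted $p$-superharmonicity of the extremizer $V$.
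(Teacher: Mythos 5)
Your proof is correct, and it takes a genuinely variational route where the paper takes a PDE route. The paper derives the Euler--Lagrange equation for $V-PV$ (the weighted $p$-Laplacian vanishes in $\Omega$ with boundary value $V\ge 0$) and then invokes the weak maximum principle to get $PV\le V$ and the comparison principle (using that $V$ is a supersolution) to get $PV\ge 0$. You instead exhibit the truncated competitors $\min(PV,V)$ and $(PV)_+$ directly and use strict convexity of the $D_a^p$-norm to force them to coincide with the unique projection $PV$. The two arguments are the same idea at heart --- your calculation is essentially the standard variational proof of the weak maximum and comparison principles specialized to this operator --- but your version is self-contained: it avoids citing maximum/comparison principles for degenerate weighted $p$-Laplacians as black boxes, and it makes transparent exactly what is used at each step. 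In particular, you correctly isolate that the upper bound needs only $V\ge 0$, whereas the lower bound genuinely requires the supersolution property of $V$ (the $|x|^{-qb}V^{q-1}$ term with a nonnegative multiplier). The one point that deserves a sentence more care in a final write-up is the membership $(PV)_+,\ \min(PV,V)\in D_a^p(\Omega)$: this follows from the lattice closure of $D_a^p(\Omega)$ together with the domination $0\le (PV-V)_+\le (PV)_+\le |PV|$, which reduces it to the known fact that a function of $D_a^p(\mathbb R^n)$ squeezed between $0$ and an element of $D_a^p(\Omega)$ lies in $D_a^p(\Omega)$; the paper implicitly relies on the analogous regularity when applying its PDE principles on a general $\Omega$, so neither route is more demanding here.
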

\begin{proof}
    Note that $V-PV$ satisfies the equation:
    \begin{equation*}
        \begin{cases}
            -\div\left(|x|^{-pa}|\nabla (V-PV)|^{p-2}(V-PV)\right)=0\quad\text{in }\Omega,\\
            V-PV=V\quad\text{on }\partial\Omega.
        \end{cases}
    \end{equation*}
    From the weak maximum principle, we conclude that $V-PV\geq 0$. Since $V$ satisfies
    \begin{equation*}
        -\div\left(|x|^{-pa}|\nabla V|^{p-2}V\right)\geq 0\quad\text{in }\R^n.
    \end{equation*}
    By applying the comparison principle, we find that $V\geq V-PV$, which implies that $PV\geq0$ in $\R^n$.
\end{proof}

\begin{proof}[Proof of Corollary \ref{ccc6}]
    By homogeneity, we assume $\norm*{\nabla u}_{L^p(\Omega)}=1$. Let $\rho$ be a small positive constant that only depends on $p,n,R,|V\cap \S^{n-1}|$ and that will be determined later. By the H\"older and Sobolev inequalities, and the assumption $\sqrt{n}\leq p<n$, we have
    \begin{equation*}
        \norm*{u}_{L_w^{p_1}(\Omega)}\leq \norm*{u}_{L^{p_1}(\Omega)}\leq \norm*{u}_{L^p(\Omega)}^\beta \norm*{u}_{L^q(\Omega)}^{1-\beta}\leq C\lambda(p,\Omega)^{-\frac{\beta}{p}},
    \end{equation*}
    where $\beta$ satisfies the condition in \eqref{qwe}. If
    \begin{equation*}
        \inf_{v\in\mathcal{M}_{p,0,0}}\norm*{u-v}_{D_0^p(\R^n)}\geq \rho,
    \end{equation*}
    then \eqref{ccc7} is a direct consequence of gradient stability. If not, suppose
    \begin{equation*}
        \inf_{v\in\mathcal{M}_{p,0,0}}\norm*{u-v}_{D_0^p(\R^n)}=\norm*{u-AV[\lambda,x_0]}_{D_0^p(\R^n)}\leq \rho,
    \end{equation*}
    where $A\in\R,\lambda>0,x_0\in\R^n$ and $V[\lambda,x_0](x):=B\lambda^{\frac{n-p}{p}}\Big(1+|\lambda (x-x_0)|^{\frac{p}{p-1}}\Big)^{1-\frac{n}{p}}$. Here, $B$ is a fixed normalization factor such that $\norm*{V[\lambda,x_0]}_{D_0^p(\R^n)}=1$. It follows from the Minkowski inequality that $|A-1|\leq \rho$. Therefore, we deduce
    \begin{equation*}
        \rho\geq \norm*{u-AV[\lambda,x_0]}_{D_0^p(\R^n)}\geq \norm*{AV[\lambda,x_0]}_{D_0^p(\R^n\backslash\Omega)}\geq (1-\rho)\norm*{V[\lambda,x_0]}_{D_0^p(\R^n\backslash\Omega)}.
    \end{equation*}
    As in the proof of Corollary \ref{ccc1}, we make the following two observations: 
    
    For any fixed $R$ and $|V\cap \S^{n-1}|$, there exist a nonincreasing function $\Lambda:(0,+\infty)\rightarrow [0,+\infty)$ satisfying $\Lambda(t)\rightarrow +\infty$ as $t\rightarrow 0$ and two positive constants $\lambda_0$ and $c_0$ such that for any domain $\Omega$ satisfying the $(A_1)$ condition,

    $(i)$ if
    \begin{equation*}
        \left(\int_{\R^n\backslash\Omega}|\nabla V[\lambda,x_0]|^pdx\right)^{\frac{1}{p}}\leq \rho_0
    \end{equation*}
    holds for some $\rho_0>0,\lambda>0,x_0\in\R^n$, then $\lambda\geq \Lambda(\rho_0)$.

    $(ii)$ if $\lambda>\lambda_0$ and $x_0\in\R^n$, it holds
    \begin{equation*}
        \left(\int_{\R^n\backslash\Omega}|\nabla V[\lambda,x_0]|^pdx\right)^{\frac{1}{p}}\geq c_0\lambda^{-\frac{n-p}{p(p-1)}}.
    \end{equation*}
    Note that a direct computation yields
    \begin{equation*}
        \norm*{V[\lambda,x_0]}_{L_w^{p_1}(\R^n)}\leq C\lambda^{-\frac{n-p}{p(p-1)}}.
    \end{equation*}
    Thus, if we set $\rho$ sufficiently small, it holds
    \begin{equation*}
        \norm*{V[\lambda,x_0]}_{L_w^{p_1}(\R^n)}\leq C\norm*{u-AV[\lambda,x_0]}_{D_0^p(\R^n)}
    \end{equation*}
    Therefore, by utilizing Lemma \ref{lll}, we obtain
    \begin{equation*}
        \begin{aligned}
            \norm*{u}_{L_w^{p_1}(\Omega)}\leq&\;C\norm*{u-PAV[\lambda,x_0]
            ]}_{L_w^{p_1}(\Omega)}+C\norm*{PV[\lambda,x_0]}_{L_w^{p_1}(\Omega)}\\
            \leq&\,C\lambda(p,\Omega)^{-\frac{\beta}{p}}\norm*{u-PAV[\lambda,x_0]}_{D_0^p(\R^n)}+C\norm*{V[\lambda,x_0]}_{L_w^{p_1}(\Omega)}\\
            \leq&\,C\lambda(p,\Omega)^{-\frac{\beta}{p}}\left(\norm*{u-AV[\lambda,x_0]}_{D_0^p(\R^n)}+\norm*{AV[\lambda,x_0]-PAV[\lambda,x_0]}_{D_0^p(\R^n)}\right)\\
            &+C\norm*{u-AV[\lambda,x_0]}_{D_0^p(\R^n)}\\
            \leq&\,C\left(1+\lambda(p,\Omega)^{-\frac{\beta}{p}}\right)\norm*{u-AV[\lambda,x_0]}_{D_0^p(\R^n)}.
        \end{aligned}
    \end{equation*}
    Thus, \eqref{ccc7} now follows directly from the gradient stability.
\end{proof}

\begin{proof}[Proof of Corollary \ref{ccc8}]
    By homogeneity, we assume $\norm*{u}_{D_a^p(\Omega)}=1$. Let $\rho$ be a small positive constant that depends only on $a,b,p,q,n,R,|V\cap \S^{n-1}|$ and will be determined later. Using the H\"older and Hardy-Sobolev inequalities, along with the assumptions on $p_1$, we deduce
    \begin{equation*}
        \begin{aligned}
           \norm*{|x|^{-a}u}_{L_w^{p_1}(\Omega)}\leq&\, \norm*{|x|^{-a}u}_{L^{p_1}(\Omega)}\\
           \leq&\,\norm*{|x|^{-a}u}_{L^p(\Omega)}^\beta\norm*{|x|^{-a}u}_{L^\frac{np}{n-p}(\Omega)}^{1-\beta}\\
            \leq&\,C\lambda(p,\Omega)^{-\frac{\beta}{p}}\norm*{\nabla(|x|^{-a}u)}_{L^p(\Omega)}^\beta\norm*{|x|^{-a}\nabla u}_{L^p(\Omega)}^{1-\beta}\\
            \leq&\,C\lambda(p,\Omega)^{-\frac{\beta}{p}}\left(\norm*{|x|^{-a-1}u}_{L^p(\Omega)}+\norm*{|x|^{-a}\nabla u}_{L^p(\Omega)}\right)^\beta\norm*{|x|^{-a}\nabla u}_{L^p(\Omega)}^{1-\beta}\\
            \leq&\,C\lambda(p,\Omega)^{-\frac{\beta}{p}}\norm*{|x|^{-a}\nabla u}_{L^p(\Omega)}=C\lambda(p,\Omega)^{-\frac{\beta}{p}},
        \end{aligned}
    \end{equation*}
    where $\beta$ is a constant satisfying \eqref{qwe}. The remainder of this proof is analogous to that of Corollary \ref{ccc6}.
\end{proof}

\subsection{Stability in the critical point setting}\label{sec2.3}
This subsection is devoted to Theorem \ref{thm5} and Corollary \ref{cor}. The proofs rely on the following three lemmas. The first lemma presents some properties of the set $P_u$ defined in \eqref{def2}.
\begin{lemma}\label{lem3}
    Assume $a,b,p,q$ satisfy the relations in \eqref{ckn1}. Then there exists a positive number $\delta_0=\delta_0(a,b,p,q,n)$ such that for any function $u\in D_a^p(\R^n)$, $v\in\mathcal{M}_{p,a,b}^s$, and for any positive number $\delta<\delta_0$, if
    \begin{equation*}
        \norm*{u-v}_{D_a^p}\leq \delta,
    \end{equation*}
    then $P_u\neq \emptyset$ and
    \begin{equation}\label{sss1}
        \sup_{V\in P_u}\norm*{V-v}_{D_a^p}=o_\delta(1).
    \end{equation}
\end{lemma}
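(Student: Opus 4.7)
The plan is to study the functional $F_u : \mathcal{M}_{p,a,b}^s \to \R$ defined by $F_u(W) := \int_{\R^n}|x|^{-qb}|W|^{q-2}Wu\,dx$, and to show first that it attains its maximum (so $P_u \neq \emptyset$) and second that every maximizer lies close to $v$ whenever $u$ does. By the scaling (and, when $(a,b) = (0,0)$, translation) invariance of $\mathcal{M}_{p,a,b}^s$ and of the relevant norms, I may normalize so that $v = V_*$ is a fixed reference Talenti bubble (parameter $\lambda = 1$, $x_0 = 0$). H\"older's inequality together with the CKN inequality gives the uniform bound $|F_u(W)| \le M^{q-1}\|u\|_{L^q(|x|^{-qb})}$ for all $W \in \mathcal{M}_{p,a,b}^s$, where $M := S(p,a,b)^{p/(q-p)}$ is the common weighted $L^q$-norm of every Talenti bubble. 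To prove that $F_u(W) \to 0$ when $W$ leaves every compact subset of the parameter manifold (i.e.\ $\lambda\to 0$ or $\infty$, and $|x_0|\to\infty$ when $(a,b)=(0,0)$), I would first approximate $u$ by $C_c^\infty$ functions (using the above uniform bound to absorb the tail error) and then exploit the explicit scaling $V_\lambda(x) = \lambda^{(n-p-pa)/p}V_*(\lambda x)$: a change of variables in $F_u(V_\lambda)$ either concentrates the integrand near the origin or disperses it to infinity, and in either case the pairing against a compactly supported $u$ tends to $0$. Continuity of $F_u$ on $\mathcal{M}_{p,a,b}^s$ then yields existence of a maximizer, so $P_u\neq\emptyset$.

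For the quantitative conclusion \eqref{sss1}, the same H\"older estimate gives $\sup_{W \in \mathcal{M}_{p,a,b}^s}|F_u(W) - F_v(W)| \le M^{q-1}\|u-v\|_{L^q(|x|^{-qb})} \le C\delta$. Together with $F_v(v) = M^q$, this implies $\max_W F_u \ge F_u(v) \ge M^q - C\delta$, and hence every $V \in P_u$ satisfies $F_v(V) \ge F_u(V) - C\delta \ge M^q - 2C\delta$. H\"older's inequality also shows $F_v(W) \le M^q$ on all of $\mathcal{M}_{p,a,b}^s$, with equality iff $|W|$ is pointwise proportional to $|v|$; since two distinct positive Talenti bubbles are never proportional (comparing their asymptotics at $0$ and at $\infty$ forces the scale and translation parameters to coincide), $v$ is the \emph{unique} maximizer of $F_v$. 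The super-level sets $S_\eta := \{W \in \mathcal{M}_{p,a,b}^s : F_v(W) \ge M^q - \eta\}$ are therefore compact (by the vanishing-at-infinity step) and shrink to $\{v\}$ as $\eta\to 0^+$; applied with $\eta = 2C\delta$ this yields a modulus of continuity for $\|V-v\|_{D_a^p}$ uniformly in $V\in P_u$, producing \eqref{sss1}.

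The main obstacle is establishing the vanishing of $F_u(V_\lambda)$ at the boundary of the parameter manifold. After reducing to $u\in C_c^\infty$, one must carefully track the scaling exponent $(q-1)(n-p-pa)/p + qb - n$ produced by the change of variables and verify, using the balance $q = np/(n-p(1+a-b))$ and $0 \le a < (n-p)/p$, that the remaining factor actually decays. Once this is in hand, the compactness of $S_\eta$ and its shrinking to $\{v\}$ follow from standard continuity arguments on the one-dimensional (or $(n+1)$-dimensional, when $(a,b)=(0,0)$) parameter manifold.
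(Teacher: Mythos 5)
The paper does not actually write out a proof of this lemma; it only observes that the argument is ``quite similar'' to that of Lemma 4.1 in Figalli--Zhang and omits the details. Your proposal supplies a complete argument along the expected lines: existence of a maximizer via a uniform H\"older bound plus vanishing at the boundary of the parameter manifold, and the quantitative closeness \eqref{sss1} via the strict, unique maximality of $F_v$ at $v$ together with compactness of super-level sets. The structure is sound, and the key observations (the uniform bound $|F_u(W)| \le M^{q-1}\|u\|_{L^q_w}$, $F_v(v)=M^q$, the H\"older equality case forcing $W=v$, and the nested-compact-sets argument) are all correct.

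The one step you flag as an ``obstacle'' --- the decay of $F_u(V_\lambda)$ as $\lambda\to 0$ or $\lambda\to\infty$ --- can in fact be closed cleanly without tracking the scaling exponent at all, by exploiting the scale-equivariance of the weighted $L^q$-norm. Writing $u_\mu(x)=\mu^{(n-p-pa)/p}u(\mu x)$ for the natural $D_a^p$- and $L^q_w$-isometric scaling, one has $F_u(V_\lambda)=F_{u_{1/\lambda}}(V_*)$. After replacing $u$ by a compactly supported $\phi$ (supported in an annulus away from the origin, with $\|u-\phi\|_{L^q_w}$ small), the support of $\phi_{1/\lambda}$ is exactly $\lambda\cdot\mathrm{supp}(\phi)$, and since $\|\phi_{1/\lambda}\|_{L^q_w}=\|\phi\|_{L^q_w}$ by isometry, H\"older on that set gives
\[
|F_{\phi_{1/\lambda}}(V_*)|\le\Bigl(\int_{\lambda\,\mathrm{supp}(\phi)}|y|^{-qb}|V_*(y)|^q\,dy\Bigr)^{(q-1)/q}\|\phi\|_{L^q_w}\,,
\]
which tends to $0$ as $\lambda\to 0$ or $\lambda\to\infty$ simply because $|y|^{-qb}|V_*|^q$ is integrable with total mass $M^q$ and the domain of integration shrinks to $\{0\}$ or escapes to infinity. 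The same computation handles $|x_0|\to\infty$ in the $(a,b)=(0,0)$ case. Thus no delicate exponent bookkeeping is required, and your argument is complete as stated once this observation is inserted.

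One small simplification worth noting: in the H\"older equality case you don't even need the ``distinct bubbles are never proportional'' asymptotic argument. Equality in H\"older forces $|W|^q$ proportional to $|v|^q$ pointwise, and since all elements of $\mathcal{M}^s_{p,a,b}$ are positive with the same $L^q_w$-norm $M$, proportionality immediately gives $W=v$.
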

The second lemma contains six elementary inequalities for vectors and numbers.
\begin{lemma}\label{lem4}
    Let $x,y\in\R^n$ and $a,b\in\R$. The following inequalities hold.

    \noindent$(1)$ If $2<p\leq 3$, then there exists a constant $C$ such that
    \begin{equation}\label{ine1}
        \left||x+y|^{p-2}(x+y)\cdot y-|x|^{p-2}(x+y)\cdot y-(p-2)|x|^{p-4}(x\cdot y)^2\right|\leq C|y|^p.
    \end{equation}
    $(2)$ If $p>3$, then there exists a constant $C$ such that
    \begin{equation}\label{ine2}
        \left||x+y|^{p-2}(x+y)\cdot y-|x|^{p-2}(x+y)\cdot y-(p-2)|x|^{p-4}(x\cdot y)^2\right|\leq C(|y|^p+|x|^{p-3}|y|^3).
    \end{equation}
    $(3)$ If $2<p\leq 3$, then there exists a constant $C$ such that
    \begin{align}\label{ine3}
        \left||x+y|^{p-2}(x+y)\cdot y-|y|^p-|x|^{p-2}x\cdot y|\right|\leq C|x|^{p-2}|y|^2.
    \end{align}
    $(4)$ If $p>3$, then there exists a constant $C$ such that
    \begin{align}\label{ine4}
        \left||x+y|^{p-2}(x+y)\cdot y-|y|^p-|x|^{p-2}x\cdot y|\right|\leq C(|x|^{p-2}|y|^2+|x||y|^{p-1}).
    \end{align}
    $(5)$ If $2<q\leq 3$, then there exists a constant $C$ such that
    \begin{equation}\label{ine5}
        \left|(a+b)|a+b|^{q-2}-a|a|^{q-2}-(q-1)|a|^{q-2}b\right|\leq C|b|^{q-1}.
    \end{equation}
    $(6)$ If $q>3$, then there exists a constant $C$ such that
    \begin{equation}\label{ine6}
        \left|(a+b)|a+b|^{q-2}-a|a|^{q-2}-(q-1)|a|^{q-2}b\right|\leq C(|b|^{q-1}+|a|^{q-3}|b|^2).
    \end{equation}
\end{lemma}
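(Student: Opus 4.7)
All six inequalities are second-order Taylor remainders of the scalar map $\varphi(a)=a|a|^{q-2}$ (for (5), (6)) and the vector map $F(x)=|x|^{p-2}x$ (for (1)--(4)); the shape of the bound is dictated by whether the derivative is H\"older continuous (the range $q\in(2,3]$ or $p\in(2,3]$) or pointwise Lipschitz (the range $q>3$ or $p>3$). I would treat every case by the same two-step strategy: split into a \emph{perturbative} regime $|b|\le\frac{1}{2}|a|$ (resp.\ $|y|\le\frac{1}{2}|x|$), where a fundamental-theorem-of-calculus expansion gives a sharp remainder estimate, and a \emph{non-perturbative} regime where each summand is bounded individually by a common dominant power.

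\textbf{Scalar case (5), (6).} Since $\varphi'(a)=(q-1)|a|^{q-2}$, one has
\begin{equation*}
\varphi(a+b)-\varphi(a)-\varphi'(a)b=\int_0^b\bigl(\varphi'(a+t)-\varphi'(a)\bigr)\,dt.
\end{equation*}
For $2<q\le 3$ the map $s\mapsto|s|^{q-2}$ is globally H\"older of exponent $q-2$, so $|\varphi'(a+t)-\varphi'(a)|\le C|t|^{q-2}$, integrating to $C|b|^{q-1}$. For $q>3$ one separates $|t|\le|a|$ (Lipschitz bound $C|a|^{q-3}|t|$, yielding $C|a|^{q-3}|b|^2$) from $|t|>|a|$ (pointwise bound $C|t|^{q-2}$, yielding $C|b|^{q-1}$), producing (6).

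\textbf{Vector case (1), (2).} Here $DF(x)=|x|^{p-2}I+(p-2)|x|^{p-4}x\otimes x$, and a direct computation gives $y\cdot DF(x)y=|x|^{p-2}|y|^2+(p-2)|x|^{p-4}(x\cdot y)^2$, so the left-hand sides of (1) and (2) equal exactly the second-order Taylor remainder
\begin{equation*}
\int_0^1 y\cdot\bigl(DF(x+ty)-DF(x)\bigr)y\,dt.
\end{equation*}
In the perturbative regime $|y|\le\frac{1}{2}|x|$, $DF$ is H\"older of exponent $p-2$ when $p\le 3$ (remainder $\le C|y|^p$) and satisfies $|D^2F(x+ty)|\le C|x|^{p-3}$ when $p>3$ (remainder $\le C|x|^{p-3}|y|^3$). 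In the non-perturbative regime $|y|>\frac{1}{2}|x|$, every term is bounded by $C|y|^p$, using $|x|\le 2|y|$ to absorb the factors of $|x|^{p-2}$ and $|x|^{p-4}|x|^2$.

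\textbf{Vector case (3), (4) and main obstacle.} Writing $|y|^p=F(y)\cdot y$, the LHS equals $(F(x+y)-F(x)-F(y))\cdot y$. For $|y|\le 2|x|$ one expands $F(x+y)-F(x)=\int_0^1 DF(x+ty)y\,dt$ with $|DF(x+ty)|\le C|x|^{p-2}$ to bound the LHS by $C|x|^{p-2}|y|^2+|y|^p\le C|x|^{p-2}|y|^2$. For $|y|>2|x|$ one expands instead $F(x+y)-F(y)=\int_0^1 DF(y+sx)x\,ds$ with $|DF(y+sx)|\le C|y|^{p-2}$ to get $|F(x+y)-F(y)|\le C|y|^{p-2}|x|$, and bounds $|F(x)\cdot y|\le |x|^{p-1}|y|$; the ratio $|y|^{p-1}|x|/(|x|^{p-2}|y|^2)=(|y|/|x|)^{p-3}$ is $\le 1$ when $p\le 3$ (giving (3)) but is uncontrolled when $p>3$, which is precisely what forces the extra term $|x||y|^{p-1}$ in (4). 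The only delicate point throughout is the cancellation $|DF(x)|\le C|x|^{p-2}$ despite the apparent singularity of the summand $(p-2)|x|^{p-4}x\otimes x$; this is inherited from the radial structure of $F$ and is the one place where the exponents must be tracked carefully to avoid an artificial blow-up at the origin.
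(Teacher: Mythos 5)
Your proof is correct and follows essentially the same route as the references the paper cites for this lemma (Figalli--Zhang, Lemma~4.1, and Figalli--Neumayer, Lemma~3.2): a second-order Taylor expansion of $\varphi(a)=a|a|^{q-2}$ and $F(x)=|x|^{p-2}x$, with the case split between the ``H\"older'' regime ($p$ or $q$ in $(2,3]$, using $|D^2F|\lesssim|x|^{p-3}$ to get $(p-2)$-H\"older continuity of $DF$) and the ``Lipschitz'' regime ($p$ or $q>3$), combined with the crude bound $|y|\gtrsim|x|$ in the non-perturbative case. The key algebraic identity you invoke, namely that the left-hand side of (1)--(2) equals $\bigl(F(x+y)-F(x)-DF(x)y\bigr)\cdot y$ and the left-hand side of (3)--(4) equals $\bigl(F(x+y)-F(x)-F(y)\bigr)\cdot y$, is exactly what makes the Taylor/FTC splitting go through, and your verification that $|DF(x)|\le(p-1)|x|^{p-2}$ (so no spurious singularity from the $|x|^{p-4}x\otimes x$ term) is the same observation the cited proofs rely on.
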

The proofs of these two lemmas are quite similar to those of \cite[Lemma 4.1]{Fig1} and \cite[Lemma 3.2]{Fig}, respectively, and so are omitted.

The third lemma is a spectral gap inequality obtained by Deng and Tian  \cite[Proposition 4.6]{Den}. Before stating this lemma, let us give a definition about the tangent space and orthogonal conditions.
\begin{definition}\label{tan}
    Assume $a,b,p,q$ satsify the relations in \eqref{ckn1}. Fix a nonzero function $V\in\mathcal{M}_{p,a,b}$. If $(a,b)\neq (0,0)$, the tangent space $T_V\mathcal{M}_{p,a,b}$ is a 2-dimensional linear space spanned by
    \begin{equation}\label{tan1}
        V(x),\quad \frac{d}{d\lambda}\Big|_{\lambda=1}\left(\lambda^{\frac{n-p-pa}{p}}V(\lambda x)\right).
    \end{equation}
    If $a=b=0$, the tangent space $T_V\mathcal{M}_{p,0,0}$ is an $(n+2)$-dimensional linear space spanned by
    \begin{equation}\label{tan2}
        V(x),\quad \frac{d}{d\lambda}\Big|_{\lambda=1}\left(\lambda^{\frac{n-p-pa}{p}}V(\lambda x)\right),\quad \partial_{x_i}V(x),\quad i=1,\ldots,n.
    \end{equation}
    For any function $\rho\in D_a^p(\R^n)$, we say that $\rho$ is orthogonal to $T_V\mathcal{M}_{p,a,b}$ if
    \begin{equation*}
        \int_{\R^n}|x|^{-qb}|V|^{q-2}W\rho \,dx=0
    \end{equation*}
    holds for any $W\in T_V\mathcal{M}_{p,a,b}$.
\end{definition}

\begin{lemma}\label{lem5}
    Assume $a,b,p,q$ satisfy the relations in \eqref{ckn1}. Then there exists a positive constant $\tau$ such that, for any nonzero function $V\in\mathcal{M}^s_{p,a,b}$ and function $\rho\in D_a^p(\R^n)$ orthogonal to $T_V\mathcal{M}_{p,a,b}$, it holds that
    \begin{equation}\label{spec}
     \begin{aligned}
         &\int_{\R^n}|x|^{-pa}\left(|\nabla V|^{p-2}|\nabla \rho|^2+(p-2)|\nabla V|^{p-4}|\nabla V\cdot\nabla\rho|^2\right)dx\\
         \geq&\, (1+\tau)(q-1)\int_{\R^n}|x|^{-qb}|V|^{q-2}\rho^2\,dx.
     \end{aligned}
    \end{equation}
\end{lemma}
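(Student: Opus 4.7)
The plan is to view \eqref{spec} as a quantitative spectral gap for the generalized eigenvalue problem
\begin{equation*}
\mathcal{L}_V\rho:=-\div\Bigl(|x|^{-pa}\bigl[|\nabla V|^{p-2}\nabla\rho+(p-2)|\nabla V|^{p-4}(\nabla V\cdot\nabla\rho)\nabla V\bigr]\Bigr)=\mu\,|x|^{-qb}|V|^{q-2}\rho.
\end{equation*}
Indeed, \eqref{spec} says precisely that, in the weighted $L^2$ inner product with density $|x|^{-qb}|V|^{q-2}$, one has $\langle\mathcal{L}_V\rho,\rho\rangle\geq (1+\tau)(q-1)\|\rho\|^2$ whenever $\rho\perp T_V\mathcal{M}_{p,a,b}$. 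It therefore suffices to compute the low-lying spectrum of $\mathcal{L}_V$ and to isolate a gap strictly above $\mu=q-1$.

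First I would identify the eigenfunctions hidden in $T_V\mathcal{M}_{p,a,b}$. Using the Euler--Lagrange equation \eqref{el} one checks directly that $\mathcal{L}_V V=(p-1)|x|^{-qb}|V|^{q-2}V$, so $V$ itself is an eigenfunction with $\mu=p-1$; this lies strictly below $q-1$ and is the ``wrong-sign'' mode that makes the naked inequality fail. Differentiating \eqref{el} along the one-parameter scaling $V_\lambda(x)=\lambda^{(n-p-pa)/p}V(\lambda x)$ at $\lambda=1$ shows that the dilation $W_s:=\tfrac{d}{d\lambda}\big|_{\lambda=1}V_\lambda$ satisfies $\mathcal{L}_V W_s=(q-1)|x|^{-qb}|V|^{q-2}W_s$; and when $a=b=0$, translation invariance yields the same conclusion for each $\partial_{x_i}V$. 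These directions span $T_V\mathcal{M}_{p,a,b}$, so orthogonality excludes them exactly.

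To control the rest of the spectrum I would reduce to $a=0$ via the Horiuchi-type substitution $\bar{u}(r\theta)=k^{1/q}u(r^k\theta)$ with $k=(n-p)/(n-p-ap)$ from Section~\ref{sec2}. This substitution is linear in $\rho$, intertwines both sides of \eqref{spec} up to explicit constants, and maps $T_V\mathcal{M}_{p,a,b}$ bijectively onto the corresponding tangent space of $\mathcal{M}^s_{p,0,b-a}$, so the orthogonality condition is preserved. In the reduced setting $V$ is radial, and expanding $\rho(r\theta)=\sum_{\ell\geq 0}\sum_j\rho_{\ell,j}(r)Y_{\ell,j}(\theta)$ in spherical harmonics block-diagonalizes $\mathcal{L}_V$ (the angular integrations annihilate all cross-terms between distinct modes). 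After the radial change of variable that linearizes the extremal equation (cf.~\cite{Hor}), each mode reduces to a one-dimensional Jacobi-type weighted eigenvalue problem on $(0,\infty)$ whose spectrum can be catalogued explicitly, with first eigenvalue strictly increasing in $\ell$.

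The main obstacle is the nonlinear cross term $(p-2)|\nabla V|^{p-4}(\nabla V\cdot\nabla\rho)^2$, which couples the radial-gradient and tangential-gradient parts of $\nabla\rho$ inside each spherical-harmonic mode and destroys the clean Hilbertian analysis available for $p=2$. I would handle it by splitting $\rho_{\ell,j}(r)Y_{\ell,j}(\theta)$ accordingly and diagonalizing the resulting $2\times 2$ coupled radial system mode by mode, which still leaves an explicitly computable first eigenvalue per $\ell$. Strict monotonicity of these eigenvalues in $\ell$ then furnishes the desired gap $\tau>0$ above $q-1$ once the finitely many modes corresponding to $T_V\mathcal{M}_{p,a,b}$ are removed; a compactness argument (compact embedding of the relevant weighted Sobolev space into the weighted $L^2$ space) upgrades the mode-by-mode estimates into a single constant $\tau$ uniform in the bubble $V\in\mathcal{M}^s_{p,a,b}$. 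This is essentially the strategy executed by Deng and Tian in \cite{Den}.
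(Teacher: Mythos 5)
The paper does not prove this lemma at all: it states it and cites it directly as \cite[Proposition 4.6]{Den} (Deng--Tian), so there is no proof in the paper against which to compare your argument. You acknowledge at the end that your sketch is ``essentially the strategy executed by Deng and Tian,'' which is the appropriate way to frame it. Your identification of the eigenmodes is correct: testing \eqref{el} directly shows $\mathcal{L}_V V=(p-1)|x|^{-qb}V^{q-1}$, and differentiating \eqref{el} along the scaling family and (when $a=b=0$) the translation family shows those directions have eigenvalue $q-1$; these are precisely the vectors spanning $T_V\mathcal{M}_{p,a,b}$.

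There are, however, two concrete issues with the sketch. First, the Horiuchi reduction does not transport the orthogonality condition cleanly in the case $a=b>0$. Your claim that the substitution ``maps $T_V\mathcal{M}_{p,a,b}$ bijectively onto the corresponding tangent space of $\mathcal{M}^s_{p,0,b-a}$'' holds when $a<b$, where both tangent spaces are two-dimensional. But when $a=b$, the target is the unweighted Sobolev manifold $\mathcal{M}_{p,0,0}$, whose tangent space is $(n+2)$-dimensional; the image of $T_V\mathcal{M}_{p,a,a}$ covers only the two radial directions, so $\bar{\rho}$ is \emph{not} orthogonal to the translation modes $\partial_{x_i}\bar{V}$, which are exactly the modes with eigenvalue $q-1$ that make the $a=0$ gap degenerate. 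The fix — implicit in how the paper uses the Horiuchi substitution in Theorem \ref{thm3} — is that the reparametrization multiplies the angular part of the quadratic form by $k^2>1$, which strictly improves the estimate in every nonradial spherical-harmonic sector and in particular lifts the $\ell=1$ (translation) eigenvalue strictly above $q-1$. Without stating this, the reduction argument fails precisely for $a=b$. Second, a minor but telling slip: since $V$ is radial, $\nabla V\cdot\nabla\rho=V_r\rho_r$, so the cross term $(p-2)|\nabla V|^{p-4}(\nabla V\cdot\nabla\rho)^2$ is purely radial; the quadratic form on the left of \eqref{spec} is $\int|x|^{-pa}|V_r|^{p-2}\left[(p-1)|\rho_r|^2+r^{-2}|\nabla_\theta\rho|^2\right]$, which decouples completely under the spherical-harmonic expansion. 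No $2\times 2$ coupled radial system arises — each mode gives a single one-dimensional radial problem — so that part of your plan addresses a difficulty that is not actually present. Finally, uniformity of $\tau$ over $V\in\mathcal{M}^s_{p,a,b}$ is free by scaling invariance of \eqref{spec}; the compactness you invoke is needed only to control the gap uniformly over the spherical-harmonic degree $\ell$, not over $V$.
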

Now we can state our proofs.

\begin{proof}[Proof of Theorem \ref{thm5}]
    Choose a function $v\in\mathcal{M}^s_{p,a,b}$ such that $\norm*{u-v}_{D_a^p}\leq 2\delta$. From Lemma \ref{lem3}, we know that $\norm*{V-v}_{D_a^p}=o_\delta(1)$. Therefore,
    \begin{equation}\label{cri1}
        \norm*{u-V}_{D_a^p}=o_\delta(1).
    \end{equation}
    Next, take a number $\mu$ such that
    \begin{equation}\label{cri2}
        \mu\int_{\R^n}|x|^{-qb}|V|^q\,dx=\int_{\R^n}|x|^{-qb}|V|^{q-2}Vu\,dx.
    \end{equation}
    From \eqref{cri1}, it follows that $\mu=1+o_\delta(1)$. Set $\rho:=u-\mu V$. We have $\norm*{\rho}_{D_a^p}=o_\delta(1)$. Combining the definition \eqref{def2} with the equation \eqref{cri2} shows that $\rho$ is orthogonal to $T_V\mathcal{M}_{p,a,b}$. Therefore, we can apply \eqref{spec} in the subsequent arguments.

    Let us test the Euler-Lagrange equation \eqref{el} with $\rho$:
    \begin{equation}\label{cri3}
        \begin{aligned}
            \left\langle H(u),\rho\right\rangle=-\int_{\R^n}|x|^{-pa}|\nabla u|^{p-2}\nabla u\cdot\nabla\rho\,dx+\int_{\R^n}|x|^{-qb}|u|^{q-2}u\rho\,dx.
        \end{aligned}
    \end{equation}
    The left-hand side is bounded by
    \begin{equation}\label{cri4}
        \norm*{H(u)}_{D_a^{-p}}\norm*{\rho}_{D_a^p}.
    \end{equation}
    To apply Lemma \ref{lem4} to the right-hand side, we split our arguments into three cases, depending on the values of $p$ and $q$.
    \\[8pt]
    \emph{Case 1: $2<p<q\leq 3$.}

    Applying \eqref{ine1}, \eqref{ine3} with $x=\mu\nabla V$ and $y=\nabla\rho$, and \eqref{ine5} with $a=\mu V$ and $b=\rho$, we deduce
    \begin{align}\label{case1.1}
           \int_{\R^n}|x|^{-pa}|\nabla u|^{p-2}\nabla u\cdot\nabla\rho dx\geq&\,\mu^{p-2}\int_{\R^n}|x|^{-pa}|\nabla V|^{p-4}\left(|\nabla V|^{2}\nabla u\cdot \nabla\rho+(p-2)|\nabla V\cdot\nabla\rho|^2\right)dx\nonumber\\
           &-C\int_{\R^n}|x|^{-pa}|\nabla \rho|^p\,dx,
        \end{align}
    \begin{align}\label{case1.2}
            \int_{\R^n}|x|^{-pa}|\nabla u|^{p-2}\nabla u\cdot\nabla\rho\,dx\geq&\int_{\R^n}|x|^{-pa}|\nabla \rho|^p\,dx+\mu^{p-1}\int_{\R^n}|x|^{-pa}|\nabla V|^{p-2}\nabla V\cdot \nabla\rho\,dx\nonumber\\
            &-\,C\mu^{p-2}\int_{\R^n}|x|^{-pa}|\nabla V|^{p-2}|\nabla\rho|^2\,dx
        \end{align}
    and
    \begin{align}\label{case1.3}
        \int_{\R^n}|x|^{-qb}|u|^{q-2}u\rho\,dx\leq&\,\mu^{q-1}\int_{\R^n}|x|^{-qb}|V|^{q-2}V\rho\,dx+\mu^{q-2}(q-1)\int_{\R^n}|x|^{-qb}|V|^{q-2}\rho^2\,dx\nonumber\\
        &+C\int_{\R^n}|x|^{-qb}|\rho|^{q}\,dx.
    \end{align}
    Combining the estimates \eqref{spec}, \eqref{cri3}, \eqref{cri4}, \eqref{case1.1}, \eqref{case1.3} with the orthogonal conditions of $\rho$, we find that
    \begin{equation}\label{case1.4}
        \begin{aligned}
            &\left(\mu^{p-2}-\mu^{q-2}(1+\tau)^{-1}\right)\int_{\R^n}|x|^{-pa}\left(|\nabla V|^{p-2}|\nabla\rho|^2+(p-2)|\nabla V|^{p-4}|\nabla V\cdot\nabla\rho|^2\right)\,dx\\
            \leq&\,C\int_{\R^n}|x|^{-qb}|\rho|^{q}\,dx+C\int_{\R^n}|x|^{-pa}|\nabla \rho|^p\,dx+\norm*{H(u)}_{D_a^{-p}}\norm*{\rho}_{D_a^p}.
        \end{aligned}
    \end{equation}
    Since $\mu=1+o_\delta(1)$ and $\norm*{\rho}_{D_a^p}=o_\delta(1)$, by letting $\delta$ sufficiently small, we can obtain
    \begin{equation}\label{case1.5}
    \begin{aligned}
        c\int_{\R^n}|x|^{-pa}|\nabla V|^{p-2}|\nabla\rho|^2-C\norm*{\rho}^{p}_{D_a^p}
        \leq\norm*{H(u)}_{D_a^{-p}}\norm*{\rho}_{D_a^p}.
    \end{aligned}
    \end{equation}
    Similarly, combining the estimates \eqref{spec}, \eqref{cri3}, \eqref{cri4}, \eqref{case1.2}, \eqref{case1.3} with the orthogonal conditions, we derive
    \begin{equation}\label{case1.6}
        \begin{aligned}
            c\norm*{\rho}^{p}_{D_a^p}-C\int_{\R^n}|x|^{-pa}|\nabla V|^{p-2}|\nabla\rho|^2\,dx
        \leq\norm*{H(u)}_{D_a^{-p}}\norm*{\rho}_{D_a^p}.
        \end{aligned}
    \end{equation}
    \\[8pt]
    \emph{Case 2: $2<p\leq 3<q$.}

    Applying \eqref{ine6} with $a=\mu V$ and $b=\rho$, we get
    \begin{align}\label{case2.1}
        \int_{\R^n}|x|^{-qb}|u|^{q-2}u\rho\,dx\leq&\,\mu^{q-1}\int_{\R^n}|x|^{-qb}|V|^{q-2}V\rho\,dx+\mu^{q-2}(q-1)\int_{\R^n}|x|^{-qb}|V|^{q-2}\rho^2\,dx\nonumber\\
        &+C\int_{\R^n}|x|^{-qb}|\rho|^{q}\,dx+C\mu^{q-3}\int_{\R^n}|x|^{-qb}|V|^{q-3}|\rho|^{3}\,dx.
        \end{align}
    The only difference between \eqref{case1.3} and \eqref{case2.1} is the term
    \begin{equation*}
        \int_{\R^n}|x|^{-qb}|V|^{q-3}|\rho|^{3}\,dx,
    \end{equation*}
    which can be estimated by \eqref{ckn}, \eqref{spec} and the Young inequality:
    \begin{equation*}
        \int_{\R^n}|x|^{-qb}|V|^{q-3}|\rho|^{3}\,dx\leq \varepsilon\norm*{\rho}^{q}_{D_a^p}+C(\varepsilon)\int_{\R^n}|x|^{-pa}|\nabla V|^{p-2}|\nabla\rho|^2\,dx
    \end{equation*}
    and
    \begin{equation*}
        \int_{\R^n}|x|^{-qb}|V|^{q-3}|\rho|^{3}\,dx\leq \varepsilon\int_{\R^n}|x|^{-pa}|\nabla V|^{p-2}|\nabla\rho|^2\,dx+C(\varepsilon)\norm*{\rho}^{q}_{D_a^p}.
    \end{equation*}
    Here, $C(\varepsilon)$ is a constant dependent on $a,b,p,q,n,\varepsilon$. Therefore, it is easy to see that \eqref{case1.5} and \eqref{case1.6} still hold in this case.
    \\[8pt]
    \emph{Case 3: $3<p<q$.}

    Applying \eqref{ine2}, \eqref{ine4} with $x=\mu V$ and $y=\nabla \rho$, we find that
    \begin{align}\label{case3.1}
           &\int_{\R^n}|x|^{-pa}|\nabla u|^{p-2}\nabla u\cdot\nabla\rho\,dx\nonumber\\
           \geq&\,\mu^{p-2}\int_{\R^n}|x|^{-pa}\left(|\nabla V|^{p-2}\nabla u\cdot \nabla\rho+(p-2)|\nabla V|^{p-4}|\nabla V\cdot\nabla\rho|^2\right)\,dx\nonumber\\
           &-C\int_{\R^n}|x|^{-pa}|\nabla \rho|^p\,dx-C\mu^{p-3}\int_{\R^n}|x|^{-pa}|\nabla V|^{p-3}||\nabla\rho|^3\,dx,
    \end{align}
    and
    \begin{align}\label{case3.2}
            &\int_{\R^n}|x|^{-pa}|\nabla u|^{p-2}\nabla u\cdot\nabla\rho\,dx\nonumber\\
            \geq&\int_{\R^n}|x|^{-pa}|\nabla \rho|^p\,dx+\mu^{p-1}\int_{\R^n}|x|^{-pa}|\nabla V|^{p-2}\nabla V\cdot \nabla\rho\,dx\nonumber\\
            &-\,C\mu^{p-2}\int_{\R^n}|x|^{-pa}|\nabla V|^{p-2}|\nabla\rho|^2\,dx-C\mu\int_{\R^n}|x|^{-pa}|\nabla V||\nabla\rho|^{p-1}\,dx.
    \end{align}
    It follows from the Young inequality that the terms
    \begin{align}
        \int_{\R^n}|x|^{-pa}|\nabla V|^{p-3}|\nabla\rho|^3\,dx\nonumber
    \end{align}
    and
    \begin{align}
        \int_{\R^n}|x|^{-pa}|\nabla V||\nabla\rho|^{p-1}\,dx\nonumber
    \end{align}
    can simultaneously be controlled by
    \begin{equation*}
        \varepsilon\norm*{\rho}^{p}_{D_a^p}+C(\varepsilon)\int_{\R^n}|x|^{-pa}|\nabla V|^{p-2}|\nabla\rho|^2\,dx
    \end{equation*}
    and
    \begin{equation*}
        \varepsilon\int_{\R^n}|x|^{-pa}|\nabla V|^{p-2}|\nabla\rho|^2\,dx+C(\varepsilon)\norm*{\rho}^{p}_{D_a^p}.
    \end{equation*}
    Therefore, as discussed in \emph{Case 1}, we can obtain the estimates \eqref{case1.5} and \eqref{case1.6}.
\end{proof}

\begin{proof}[Proof of Corollary \ref{cor}]
     From Theorem \ref{thm5}, we have \eqref{sta6} and \eqref{sta7}. If $\rho=0$, it can be easily shown that  both \eqref{cor1} and \eqref{cor2} hold. In the following, we assume $\rho\neq 0$. Now, if the value of
    \begin{equation}\label{aaa}
        \frac{\norm*{\rho}^{p}_{D_a^p}}{\int_{\R^n}|x|^{-pa}|\nabla V|^{p-2}|\nabla\rho|^2\,dx}
    \end{equation}
    falls outside the interval $\left(\frac{c_1}{2C_1},\frac{2C_1}{c_1}\right)$, we can deduce
    \begin{equation}\label{aaa1}
        \norm*{H(u)}_{D_a^{-p}}\geq \frac{c_1}{2}\norm*{\rho}^{p-1}_{D_a^p}.
    \end{equation}
    If not, set $\eta=\left(\frac{c_1}{2C_1}\right)^{\frac{2}{p-2}}$. For any $0<t\leq\eta$, we can apply Theorem \ref{thm5} to $u_t:=tu+(1-t)V\,\in D_a^p(\R^n)$, $V\in P_{u_t}$, $\mu_t:=t\mu+1-t$, and $\rho_t:=u_t-\mu_tV=t\rho$. It is straightforward to check that
    \begin{equation*}
        \frac{\norm*{\rho_t}^{p}_{D_a^p}}{\int_{\R^n}|x|^{-pa}|\nabla V|^{p-2}|\nabla\rho_t|^2\,dx}<\frac{c_1}{2C_1}.
    \end{equation*}
    Thus, the estimate
    \begin{equation}\label{aaa2}
         \norm*{H(u_t)}_{D_a^{-p}}\geq \frac{c_1}{2}\norm*{\rho_t}^{p-1}_{D_a^p}
    \end{equation}
    holds for any $0<t\leq \eta$.

    It remains to derive \eqref{cor1} and \eqref{cor2} from \eqref{aaa1} and \eqref{aaa2}, respectively. Without loss of generality, we assume $\eqref{aaa1}$ holds here. From the Minkowski inequality, it suffices to prove
    \begin{equation}\label{zzz1}
        \begin{aligned}
            |\mu-1|\leq C\norm*{H(u)}_{D_a^{-p}}+C\norm*{\rho}_{D_a^p}.
        \end{aligned}
    \end{equation}
    Consider the following identity:
    \begin{align}\label{zzz2}
        \left(\mu^{p-1}-\mu^{q-1}\right)|x|^{-bq}V^{q-1}=&\,\div(|x|^{-pa}|\nabla u|^{p-2}\nabla u)-\mu^{p-1}\div(|x|^{-pa}|\nabla V|^{p-2}\nabla V)\nonumber\\
        &-\div(|x|^{-pa}|\nabla u|^{p-2}\nabla u)-|x|^{-qb}|u|^{q-2}u\nonumber\\
        &+|x|^{-qb}|u|^{q-2}u-\mu^{q-1}|x|^{-qb}|V|^{q-2}V.\nonumber
    \end{align}
    Testing this identity with the function $V$ and applying Lemma \ref{lem4} along with the H\"older inequality, we deduce
    \begin{equation*}
        |\mu^{p-1}-\mu^{q-1}|\leq C\norm*{H(u)}_{D_a^{-p}}+C\norm*{\rho}_{D_a^p}.
    \end{equation*}
    Since $\mu=1+o_\delta(1)$ and $p<q$, $|\mu-1|$ is comparable to $|\mu^{p-1}-\mu^{q-1}|$. Thus, \eqref{zzz1} holds.
\end{proof}

\vskip0.4in
\section{Improved Hardy-type inequalities}\label{sec3}
In this section, we focus on studying various improved Hardy-type inequalities. In each subsection, we address a specific type of Hardy-type inequality. An important tool we employ is a class of transformations. A prototype of these transformations is presented in Subsection \ref{sec2.1}, where it has been shown to be useful in deriving the gradient stability of the Caffarelli-Kohn-Nirenberg inequality. We would like to inform readers that we will omit some details in the following proofs when encountering computations and arguments similar to those presented in Subsection \ref{sec2.1}.

\subsection{The Hardy-Sobolev inequality}\label{sec3.1}
In this subsection, we consider the following Hardy-Sobolev inequality: for $1<p<n$, $0\leq a<\frac{n-p}{p}$, and $u\in D_a^p(\R^n)$,
\begin{align}\label{kk3}
    \int_{\R^n}|x|^{-ap}|\nabla u|^pdx\geq\left(\frac{n-p-pa}{p}\right)^p\int_{\R^n}|x|^{-(a+1)p}|u|^pdx.
\end{align}
The constant $\left(\frac{n-p-pa}{p}\right)^p$ is optimal but cannot be achieved by nontrivial functions. When $a=0$, \eqref{kk3} reduces to the classical Hardy inequality \eqref{kk1}. In \cite{Bre1}, for the first time in the literature, Br\'ezis and V\'azquez obtained improved Hardy inequalities on bounded domains (see \eqref{kk2}). Their pioneering work has received enormous attention, and we refer interested readers to the papers \cite{Abd,Adi0,Adi1,Barb0,Barb0.5,Barb,Bre1,Cuo,Fil,Gaz0.5,Gki,Psa,Vaz,Wan,Zog} and the monographs \cite{Bal0,Gho00,Kuf,Kuf1} for related topics. Here, we only mention some previous results that are closely related to ours.

First, we recall the estimate \eqref{kk2} obtained in \cite{Bre1}. In the particular case $p=2$, Br\'ezis and V\'azquez found the explicit stability constant:
\begin{align}\label{kk4}
    \int_{\Omega}|\nabla u|^2dx-\left(\frac{n-2}{2}\right)^2\int_{\Omega}\frac{|u|^2}{|x|^2}dx\geq z_0^2w_n^\frac{2}{n}|\Omega|^{-\frac{2}{n}}\int_{\Omega}|u|^2dx,
\end{align}
where $z_0=2.4048\cdots$ is the first zero of the Bessel function $J_0(z)$ and $w_n$ is the volume of the unit ball. Moreover, the constant $z_0^2w_n^2|\Omega|^{-\frac{2}{n}}$ is optimal when $\Omega$ is a ball, but it is not achieved. In \cite{Gaz0.5}, Gazzola, Grunau and Mitidieri extended \eqref{kk4} to the case $1<p<n$:
\begin{align}\label{kk5}
    \int_{\Omega}|\nabla u|^pdx-\left(\frac{n-p}{p}\right)^p\int_{\Omega}\frac{|u|^p}{|x|^p}dx\geq \Gamma_pw_n^\frac{p}{n}|\Omega|^{-\frac{p}{n}}\int_{\Omega}|u|^pdx.
\end{align}
The constant $\Gamma_p$ above has an explicit form, and in particular $\Gamma_2=z_0^2$. We refer to \cite[Theorem 1]{Gaz0.5} for the expression of $\Gamma_p$. We also note that, to the best of our knowledge, the optimality of $\Gamma_p\;(p\neq2)$ is still unknown; for the case $p=3$, some related results are obtained in \cite{Cuo}. In \cite{Fil}, by introducing suitable logarithmic terms, Filippas and Tertikas extended \eqref{kk2} to the critical case $q=\frac{2n}{n-2}$:
\begin{align}\label{kk6}
    \int_{\Omega}|\nabla u|^2dx-\left(\frac{n-2}{2}\right)^2\int_{\Omega}\frac{|u|^2}{|x|^2}dx\geq C_n(s)\left(\int_{\Omega}X_1^\frac{2n-2}{n-2}\left(s,\frac{|x|}{D}\right)|u|^\frac{2n}{n-2}dx\right)^\frac{n-2}{n},
\end{align}
where $s>0$, $D=\sup\limits_{x\in\Omega}|x|$, and $X_1(s,t):=(s-\ln{t})^{-1}$. Moreover, the exponent $\frac{2n-2}{n-2}$ is optimal and cannot be decreased. The optimal stability constant $C_n(s)$ was later found by Adimurthi, Filippas and Tertikas in \cite{Adi1}. Specfically, they showed that
\begin{align}\label{kk7}
    C_n(s)=\begin{cases}
        (n-2)^{-\frac{2n-2}{n}}S(2,0,0),\quad &s\geq\frac{1}{n-2},\\
        s^\frac{2n-2}{n}S(2,0,0),\quad &0<s<\frac{1}{n-2}.
    \end{cases}
\end{align}
Recall that $S(2,0,0)$ is the optimal Sobolev constant defined in \eqref{ckn2}. Moreover, they proved that $C_n(s)$ cannot be achieved. When $\Omega$ is the ball $B_R$ and the function $u$ is radially symmetric, from the results in \cite{Adi1,Zog}, \eqref{kk6} holds with $s=0$ and the best constant $C_{n,\text{rad}}=(n-2)^{-\frac{2n-2}{n}}S(2,0,0)$. Moreover, $C_{n,\text{rad}}$ can be achieved by the functions:
\begin{align}\label{kk8}
    u(x)=|x|^\frac{n-2}{2}\left(\mu^2+\nu^2\left(\ln{R}-\ln{|x|}\right)^{-\frac{2}{n-2}}\right)^{-\frac{n-2}{2}},\quad\mu\neq0,\;\nu\neq0.
\end{align}
Here, we also mention a related result given by Stubbe \cite{Stu}, which interpolates the Hardy inequality and the critical Sobolev inequality in the whole space $\R^n$: for any $0<\delta<1$,
\begin{align}\label{kk9}
     \int_{\R^n}|\nabla u|^2dx-\left(\frac{n-2}{2}\right)^2\delta\int_{\R^n}\frac{|u|^2}{|x|^2}dx\geq (1-\delta)^\frac{n-1}{n}S(2,0,0)\left(\int_{\R^n}|u|^\frac{2n}{n-2}dx\right)^\frac{n-2}{n}.
\end{align}
The constant $(1-\delta)^\frac{n-1}{n}S(2,0,0)$ is optimal and can be achieved by the function:
\begin{align}\label{kk10}
    u(x)=|x|^{-\frac{n-2}{2}(1-\sqrt{1-\delta})}\left(1+|x|^{2\sqrt{1-\delta}}\right)^{-\frac{n-2}{2}}.
\end{align}

Now, let us state our results, which extend the above estimates \eqref{kk4}, \eqref{kk5}, \eqref{kk6} and \eqref{kk9} to the Hardy-Sobolev inequality \eqref{kk3}.
\begin{theorem}\label{thm1.1}
    For any domain $\Omega\subset\R^n$ with finite volume, $1<p<n$, $0<a<\frac{n-p}{p}$, and for every $u\in D_a^p(\Omega)$, we have
    \begin{align}\label{kk11}
        \int_{\Omega}\frac{|\nabla u|^p}{|x|^{ap}}dx-\left(\frac{n-p-pa}{p}\right)^p\int_{\Omega}\frac{|u|^p}{|x|^{(a+1)p}}dx\geq& \;\Gamma_{p,a}\left(\frac{w_n}{|\Omega^{(p,a)}|}\right)^\frac{p}{n} \int_{\Omega}\frac{|u|^p}{|x|^{\frac{apn}{n-p}}}dx\nonumber\\
        \geq&\;\Gamma_{p,a}\left(\frac{w_n}{|\Omega|}\right)^\frac{p(n-p-pa)}{n(n-p)} \int_{\Omega}\frac{|u|^p}{|x|^{\frac{apn}{n-p}}}dx,
    \end{align}
    where $\Gamma_{p,a}=\left(\frac{n-p-pa}{n-p}\right)^p\Gamma_p$ (recall \eqref{kk5}) and $\Omega^{(p,a)}:=\left\{x\;\big|\;|x|^{\frac{ap}{n-p-pa}}x\in\Omega\right\}$. Moreover, when $\Omega$ is a ball centered at the origin and $p=2$, the constant $\Gamma_{2,a}$ is optimal but cannot be achieved.
\end{theorem}
\begin{theorem}\label{thm1.2}
    For any bounded domain $0\in\Omega\subset\R^n$, where $n\geq 3$, $0<a<\frac{n-2}{2}$, $s>0$, and $D:=\sup\limits_{x\in\Omega}|x|$, the following inequality holds for every $u\in D_0^2(\Omega)$:
    \begin{align}\label{kk12}
        \int_{\Omega}\frac{|\nabla u|^2}{|x|^{2a}}dx-\left(\frac{n-2-2a}{2}\right)^2\int_{\Omega}\frac{|u|^2}{|x|^{2(a+1)}}dx\geq C_n(s)\left(\int_{\Omega}X_1^\frac{2n-2}{n-2}\left(s,\frac{|x|}{D}\right)\frac{|u|^\frac{2n}{n-2}}{|x|^\frac{2an}{n-2}}dx\right)^\frac{n-2}{n},
    \end{align}
    where $C_n(s)$ is the constant given in \eqref{kk7}. Moreover, $C_n(s)$ is optimal and cannot be achieved. Additionally, if $\Omega$ is the ball $B_R$, $s=0$ and $u$ is radially symmetric, then \eqref{kk12} holds with the best constant $C_{\text{rad}}=(n-2)^{-\frac{2n-2}{n}}S(2,0,0)$, and $C_{\text{rad}}$ can be attained by the functions:
    \begin{align}
        u(x)=|x|^\frac{n-2-2a}{2}\left(\mu^2+\nu^2\left(\ln{R}-\ln{|x|}\right)^{-\frac{2}{n-2}}\right)^{-\frac{n-2}{2}},\quad\mu\neq0,\;\nu\neq0.\nonumber
    \end{align}
\end{theorem}
\begin{theorem}\label{thm1.3}
    Assume $n\geq 3$, $0<a<\frac{n-2}{2}$, $0<\delta<1$, and $u\in D_a^2(\R^n)$. Then, the following inequality holds:
    \begin{align}\label{kk13}
        \int_{\R^n}\frac{|\nabla u|^2}{|x|^{2a}}dx-\left(\frac{n-2-2a}{2}\right)^2\delta\int_{\R^n}\frac{|u|^2}{|x|^{2(a+1)}}dx\geq (1-\delta)^\frac{n-1}{n}S(2,a,a)\left(\int_{\R^n}\frac{|u|^\frac{2n}{n-2}}{|x|^\frac{2an}{n-2}}dx\right)^\frac{n-2}{n}.
    \end{align}
    Moreover, the constant $(1-\delta)^\frac{n-1}{n}S(2,a,a)$ is sharp and can be attained by the function:
    \begin{align}
        u(x)=|x|^{-\frac{n-2-2a}{2}(1-\sqrt{1-\delta})}\left(1+|x|^{2\sqrt{1-\delta}}\right)^{-\frac{n-2-2a}{2}}.\nonumber
    \end{align}
\end{theorem}
\begin{proof}[Proof of the above three theorems]
    Theorem \ref{thm1.1}, Theorem \ref{thm1.3}, and the radial case in Theorem \ref{thm1.2} can be derived by applying a class of transformations: $\Bar{u}(r\theta)=k^\frac{1}{p}u(r^k\theta)$, where $1<p<n$, $0<a<\frac{n-p}{p}$, and $k=\frac{n-p}{n-p-pa}$, using \eqref{kk4}-\eqref{kk10}, and following a similar reasoning as in the proofs of Theorem \ref{thm1} and Theorem \ref{thm2}. It is noteworthy that in the proof of Theorem \ref{thm1.1}, to address the term $|\Omega|$, the following observation is useful: if $|\Omega|=|B_t|$ for some $t>0$, then $|\Omega^{(p,a)}|\leq |B_t^{(p,a)}|$, with equality holding precisely when $\Omega=B_t$ is a ball.

\vskip0.12in
    Next, we address the general case in Theorem \ref{thm1.2}. Following the arguments in \cite{Adi1}, we define $v(x)=|x|^\frac{n-2-2a}{2}u(x)$. By density, we can assume $u\in C_c^\infty(\Omega\backslash\{0\})$. A direct computation yields that the inequality \eqref{kk12} is equivalent to the following Hardy-Sobolev-type inequality:
    \begin{align}
        \int_{\Omega}\frac{|\nabla v|^2}{|x|^{(n-2)}}dx\geq C_n(s)\left(\int_{\Omega}X_1^\frac{2n-2}{n-2}\left(s,\frac{|x|}{D}\right)\frac{|v|^\frac{2n}{n-2}}{|x|^n}dx\right)^\frac{n-2}{n},\nonumber
    \end{align}
    which has been thoroughly studied in \cite[Theorem A']{Adi1}. Thus, our proof is complete.
\end{proof}

Finally, we note that compared to the ground state transformation $v(x)=|x|^\frac{n-2-2a}{2}u(x)$, the transformation $u\rightarrow\Bar{u}(r\theta):=k^\frac{1}{2}u(r^k\theta)$, with $k=\frac{n-2}{n-2-2a}$, provides an alternative method to derive \eqref{kk12} from \eqref{kk6}. However, this transformation results in a non-optimal stability constant when $s$ is small. The primary reason for this is that when $s$ is small, the sharpness of $C_n(s)$ is achieved by testing suitable non-radial functions, which behave poorly under the transformation $u\rightarrow\Bar{u}$.

Nevertheless, for the case $p\neq 2$, where applying the ground state transformation is somewhat complicated, the transformation $u\rightarrow\Bar{u}$ remains effective and simpler. To illustrate it, we consider the following result obtained in \cite{Gki}, which extends the estimate \eqref{kk6}: assume $0\in\Omega\subset\R^n$ is a bounded domain, $n\geq2$, $1<p<n$, $m\in\N$, and $u\in D_0^p(\Omega)$. Then there exist constants $B=B(n,p)\geq 1$ and $C=C(n,p)>0$ such that
\begin{align}\label{kk14}
    \int_{\Omega}|\nabla u|^pdx-\left(\frac{n-p}{p}\right)^p\int_{\Omega}\frac{|u|^p}{|x|^p}dx&-\frac{p-1}{2p}\left(\frac{n-p}{p}\right)^{p-2}\sum_{i=1}^m\int_{\Omega}\frac{|u|^p}{|x|^p}Y_i^2\left(\frac{|x|}{D}\right)dx\nonumber\\
    &\geq C\left(\int_{\Omega}|u|^\frac{np}{n-p}Y_{m+1}^{\frac{2n-p}{n-p}}\left(\frac{|x|}{D}\right)dx\right)^\frac{n-p}{n},
\end{align}
where $D:=B\sup\limits_{x\in\Omega}|x|$, $Y_j(t):=\prod\limits_{i=1}^jX_j(t)$, $X_j(t):=X_1(X_{j-1}(t))$, and $X_1(t):=(1-\ln{t})^{-1}$. Moreover, the weight function $Y_{m+1}^{\frac{2n-p}{n-p}}$ is optimal in the sense that the exponent $\frac{2n-p}{n-p}$ on $X_{m+1}$ cannot be decreased.

Now, by applying the transformation $u\rightarrow\Bar{u}(r\theta)=k^\frac{1}{p}u(r^k\theta)$ with $1<p<n$, $0<a<\frac{n-p}{p}$, and $k=\frac{n-p}{n-p-pa}$, and reasoning similarly as in the proofs of Theorems \ref{thm1} and \ref{thm2}, we can obtain the following estimate \eqref{kk15}.
\begin{theorem}\label{thm1.4}
    Assume $0\in\Omega\subset\R^n$ is a bounded domain, with $n\geq2$, $1<p<n$, $0<a<\frac{n-p}{p}$, $m\in\N$, and $u\in D_a^p(\Omega)$. Then there exist constants $B=B(n,a,p)\geq 1$ and $C=C(n,a,p)>0$ such that
    \begin{align}\label{kk15}
        &\;\int_{\Omega}\frac{|\nabla u|^p}{|x|^{ap}}dx-\left(\frac{n-p-pa}{p}\right)^p\int_{\Omega}\frac{|u|^p}{|x|^{p(a+1)}}dx\\
        \geq&\;\frac{p-1}{2p}\left(\frac{n-p-pa}{p}\right)^{p-2}\sum_{i=1}^m\int_{\Omega}\frac{|u|^p}{|x|^{p(a+1)}}Y_i^2\left(\frac{|x|}{D}\right)dx+ C\left(\int_{\Omega}\frac{|u|^\frac{np}{n-p}}{|x|^\frac{apn}{n-p}}Y_{m+1}^{\frac{2n-p}{n-p}}\left(\frac{|x|}{D}\right)dx\right)^\frac{n-p}{n},\nonumber
    \end{align}
    where $D$ and $Y_i$ are defined as above. Moreover, the weight function $Y_{m+1}^{\frac{2n-p}{n-p}}$ is optimal in the sense that the exponent $\frac{2n-p}{n-p}$ on $X_{m+1}$ cannot be decreased.
\end{theorem}
\begin{proof}
    For the derivation of \eqref{kk15}, here we illustrate how to handle the term $Y_i$. We need the following two general inequalities: Assume $k>1$ and $0<t\leq1$ Then there exists a positive constant $C_0=C_0(k)$ such that for any $C\geq C_0$, we have
    \begin{align}
        X_1(t)\geq kX_1(C^{-1}t^k),\quad X_i(t)\geq X_i(C^{-1}t^k) \;\;\forall\; 2\leq i\leq m.\nonumber
    \end{align}
    The first inequality is straightforward, and one can see that $C_0=e^{k-1}$ is sufficient. For the second inequality, we can choose $C_0=1$, as $X_i$ is an increasing function on $[0,1]$. Thus, in conclusion, we can take $C_0=e^{k-1}$. Thanks to these two estimates, we can obtain the constant $\frac{p-1}{2p}\left(\frac{n-p-pa}{p}\right)^{p-2}$ in \eqref{kk15}. Moreover, since $C_0$ is independent of $m$, the constants $B$ and $C$ in the statement of Theorem \ref{thm1.4} are also independent of $m$.
    
    It remains to show the optimality of the exponent $\frac{2n-p}{n-p}$ on $X_{m+1}$. We use similar ideas as in \cite{Gki}. For simplicity, we define
    \begin{align}
        I_m[u]:=&\;\int_{\Omega}\frac{|\nabla u|^p}{|x|^{ap}}dx-\left(\frac{n-p-pa}{p}\right)^p\int_{\Omega}\frac{|u|^p}{|x|^{p(a+1)}}dx\nonumber\\
        &-\frac{p-1}{2p}\left(\frac{n-p-pa}{p}\right)^{p-2}\sum_{i=1}^m\int_{\Omega}\frac{|u|^p}{|x|^{p(a+1)}}Y_i^2\left(\frac{|x|}{D}\right)dx,\nonumber
    \end{align}
    and we will omit the term $\frac{|x|}{D}$ in the following. If for some $0<\varepsilon<1$, there exist constants $B\geq1$ and $C>0$ such that
    \begin{align}
        I_m[u]\geq C\left(\int_{\Omega}\frac{|u|^\frac{np}{n-p}}{|x|^\frac{apn}{n-p}}Y_{m}^{\frac{2n-p}{n-p}}X_{m+1}^{\frac{(2n-p)\varepsilon}{n-p}}dx\right)^\frac{n-p}{n}\nonumber
    \end{align}
    holds for every $u\in D_a^p(\Omega)$, then from the H\"older inequality, for any $\gamma>0$, we have
    \begin{align}
        \int_{\Omega}\frac{|u|^p}{|x|^{p(a+1)}}Y_m^2X_{m+1}^\gamma dx
        =&\;\int_{\Omega}\left(|x|^{-p}Y_m^{\frac{p}{n}}X_{m+1}^{\gamma-\frac{(2n-p)\varepsilon}{n}}\right)\left(\frac{|u|^p}{|x|^{pa}}Y_m^\frac{2n-p}{n}X_{m+1}^\frac{(2n-p)\varepsilon}{n}\right)dx\nonumber\\
        \leq&\;C\left(\int_{\Omega}|x|^{-n}Y_mX_{m+1}^{\frac{n\gamma-(2n-p)\varepsilon}{p}}dx\right)^\frac{p}{n}I_m[u].\nonumber
    \end{align}
    From \cite[formula (3.8)]{Barb0.5}, we know that the integral
    \begin{align}
        \int_{\Omega}|x|^{-n}Y_mX_{m+1}^{\frac{n\gamma-(2n-p)\varepsilon}{p}}dx\nonumber
    \end{align}
    is finite if and only if
    \begin{align}
        \frac{n\gamma-(2n-p)\varepsilon}{p}>1,\nonumber
    \end{align}
    which is equivalent to
    \begin{align}
        \gamma>2-\frac{(2n-p)(1-\varepsilon)}{n}.\nonumber
    \end{align}
    Since $\varepsilon<1$, we can take a number $\gamma<2$ such that
    \begin{align}
        \int_{\Omega}\frac{|u|^p}{|x|^{p(a+1)}}Y_m^2X_{m+1}^\gamma dx\leq CI_m[u].\nonumber
    \end{align}
    However, this contradicts \cite[Theorem 2]{Barb0}. Therefore, the optimality is proved.
\end{proof}

\subsection{The \texorpdfstring{$L^p$}.-logarithmic Sobolev inequality with weights}\label{sec3.2}
In this subsection, we consider the following Euclidean $L^p$-logarithmic Sobolev inequality: for $n\geq1$, $p>1$, and $u\in D_0^p(\R^n)$ with $\int_{\R^n}|u|^pdx=1$, we have
\begin{align}\label{qq1}
    \int_{\R^n}|u|^p\ln{|u|^p}dx\leq \frac{n}{p}\ln\left(\LL_p\int_{\R^n}|\nabla u|^pdx\right).
\end{align}
The sharp constant $\LL_p$ has the form:
\begin{align}\label{qq2}
    \frac{p}{n}\left(\frac{p-1}{e}\right)^{p-1}\left(\Gamma\left(\frac{(p-1)n}{p}+1\right)w_n\right)^{-\frac{p}{n}},
\end{align}
where $w_n$ is the volume of the unit ball. All the extremizers of \eqref{qq1} are given by the following family of Gaussians:
\begin{align}\label{qq3}
    u_{\lambda,x_0}(x)=\lambda e^{-\alpha_\lambda |x+x_0|^{\frac{p}{p-1}}},
\end{align}
where $\lambda\neq0$, $x_0\in\R^n$, and $\alpha_\lambda$ is a positive constant that depends on $\lambda$ such that $\int_{\R^n}|u_{\lambda,x_0}|^pdx=1$. In the case $p=2$ and $n\geq 3$, \eqref{qq1} was obtained by Weissler \cite{Weis}, and an equivalent form for the Gaussian measure was derived by Gross \cite{Gro}. The optimizers in this case were classified by Carlen \cite{Car}. In the case $1<p<n$, \eqref{qq1} along with its extremizers was obtained by Del Pino and Dolbeault \cite{del0}. In the remaining case $p\geq n\geq1$, \eqref{qq1} was obtained by Gentil \cite{Gen} and Fujita \cite{Fuj}. The classification of extremal functions was derived recently by Balogh, Don and Krist\'aly \cite{Bal}.

Our first result establishes weighted versions of the inequality \eqref{qq1} and provides a complete classification of the corresponding extremizers.
\begin{theorem}\label{thm2.1}
    Assume $n\geq 1$, $p>1$, $p\neq n$, and $0<a<\left|\frac{n-p}{p}\right|$. Then for any $u\in D_a^p(\R^n)$ with $\int_{\R^n}|x|^{\frac{-apn}{|n-p|}}|u|^pdx=1$, we have
\begin{align}\label{qq4}
    \int_{\R^n}|x|^{\frac{-apn}{|n-p|}}|u|^p\ln{|u|^p}dx\leq \frac{n}{p}\ln\left(\LL_{p,a}\int_{\R^n}|x|^{-ap\chi(n-p)}|\nabla u|^pdx\right),
\end{align}
where $\chi(t):=\frac{t}{|t|}$ for any $t\neq 0$. The sharp constant $\LL_{p,a}$ is given by $\LL_p\left(\frac{|n-p|}{|n-p|-ap}\right)^{\frac{n-1}{n}p}$, and all the extremizers are given by the Gaussians:
\begin{align}
    u(x)=\lambda e^{-\alpha_\lambda |x|^{\frac{p(|n-p|-ap)}{(p-1)|n-p|}}},\nonumber
\end{align}
where $\lambda\neq 0$ and $\alpha_\lambda$ is a positive normalization factor that depends on $\lambda$.
\end{theorem}
\begin{theorem}\label{thm2.2}
    Assume $p=n>1$ and $0<s<1$. Then for any function $u\in D_0^n(\R^n)$ with $\int_{\R^n}|x|^{-ns}|u|^ndx=1$, we have
\begin{align}\label{qq5}
    \int_{\R^n}|x|^{-ns}|u|^n\ln{|u|^n}dx\leq \ln\left(\LL_{n,s}\int_{\R^n}|\nabla u|^ndx\right).
\end{align}
The sharp constant $\LL_{n,s}$ is given by $\LL_n(1-s)^{1-n}$, and all the extremizers are given by the Gaussians:
\begin{align}
    u(x)=\lambda e^{-\alpha_\lambda|x|^{\frac{n(1-s)}{n-1}}},\nonumber
\end{align}
where $\lambda\neq 0$ and $\alpha_\lambda$ is a positive normalization factor that depends on $\lambda$.
\end{theorem}
\begin{remark}
    In the case $1<p\leq n$, the inequalities \eqref{qq4} and \eqref{qq5} have been obtained by Lam and Lu \cite{Lam1}. Their approach is mainly based on the endpoint differentiation method, which is now a standard technique for deriving logarithmic-type inequalities. However, this method does not provide much insight into the extremizers. Our approach is not only simpler but also offers a complete classification of the optimizers.
\end{remark}
\begin{proof}[Proof of the above two theorems]
    For Theorem \ref{thm2.1}, we consider the transformation $u\rightarrow\Bar{u}(r\theta):=k^{\frac{1}{p}}u(r^k\theta)$ with $k=\frac{|n-p|}{|n-p|-ap}$. For Theorem \ref{thm2.2}, we consider the transformation $u\rightarrow\Bar{u}(r\theta):=k^{\frac{1}{n}}u(r^k\theta)$ with $k=\frac{1}{1-s}$. Based on the previous results \eqref{qq1}-\eqref{qq3}, the remaining arguments are similar to those in the proof of Theorem \ref{thm1} and are thus omitted.
\end{proof}

Next, we consider the following result obtained recently by Balogh, Don and Krist\'aly \cite{Bal}, which establishes a weighted version of the inequality \eqref{qq1} involving a log-concave homogeneous weight on an open convex cone.

Assume $n\geq1$, $p>1$, $E\subset\R^n$ is an open convex cone, and $w:E\rightarrow\R_+$ is a log-concave $C^1$ homogeneous function with degree $\tau\geq0$. Then for every function $u\in D_0^p(w;E)$ with $\int_E|u|^pwdx=1$, we have
\begin{align}\label{qq6}
    \int_{E}|u|^p\ln{|u|^p}wdx\leq \frac{n+\tau}{p}\ln\left(\LL_{w,p}\int_{E}|\nabla u|^pwdx\right).
\end{align}
(Here we say $u\in D_0^p(w;E)$ if both $u$ and $\nabla u$ belong to the weighted Lebesgue space $L^p(w;E)$.) The sharp constant $\LL_{w,p}$ has the form:
\begin{align}\label{qq7}
    \frac{p}{n+\tau}\left(\frac{p-1}{e}\right)^{p-1}\left(\Gamma\left(\frac{(p-1)(n+\tau)}{p}+1\right)\int_{B_1\cap E}wdx\right)^{-\frac{p}{n+\tau}},
\end{align}
and all the optimizers are given by the Gaussians:
\begin{align}\label{qq8}
    u_{\lambda,x_0}(x)=\lambda e^{-\alpha_\lambda |x+x_0|^{\frac{p}{p-1}}},
\end{align}
where $\lambda\neq0$, $x_0\in\R^n$ such that
\begin{align}\label{qq9}
    \begin{cases}
        x_0\in\partial E\cap-\partial E\;\text{and}\;w(x+x_0)=w(x)\;\;\forall\;x\in E,\quad\text{if }\tau>0,\\
        x_0\in\Bar{E}\cap-\Bar{E},\quad\text{if }\tau=0,
    \end{cases}
\end{align}
and $\alpha_\lambda$ is a positive constant that depends on $\lambda$ such that $\int_{E}|u_{\lambda,x_0}|^pwdx=1$. It is worth mentioning that from \cite[Proposition 2.1]{Bal}, $\tau=0$ if and only $w$ is a constant in $E$, and in this case, \eqref{qq6} also extends \eqref{qq1} to convex cones.

Our second result extends Theorem \ref{thm2.1} and Theorem \ref{thm2.2} to the inequality \eqref{qq6}.
\begin{theorem}\label{thm2.3}
    Let $E\subset\R^n\;(n\geq1)$ be an open convex cone and $w:E\rightarrow\R_+$ be a log-concave $C^1$ homogeneous function with degree $\tau\geq0$. Assume $p>1$, $p\neq n+\tau$, and $0<a<\left|\frac{n+\tau-p}{p}\right|$. Then for every function $u\in D_a^p(w;E)$ with $\int_E|x|^{-\frac{ap(n+\tau)}{|n+\tau-p|}}|u|^pwdx=1$, we have
\begin{align}\label{qq10}
    \int_{E}|x|^{-\frac{ap(n+\tau)}{|n+\tau-p|}}|u|^p\ln{|u|^p}wdx\leq \frac{n+\tau}{p}\ln\left(\LL_{w,p,a}\int_{E}|x|^{-ap\chi(n+\tau-p)}|\nabla u|^pwdx\right).
\end{align}
(Here we say $u\in D_a^p(w;E)$ if both $|x|^{-\frac{a(n+\tau)}{|n+\tau-p|}}u$ and $|x|^{-a\chi(n+\tau-p)}\nabla u$ belong to the weighted Lebesgue space $L^p(w;E)$.) The sharp constant $\LL_{w,p,a}$ equals $\LL_{w,p}\left(\frac{|n+\tau-p|}{|n+\tau-p|-ap}\right)^{\frac{n+\tau-1}{n+\tau}p}$, and all the extremizers are given by the Gaussians:
\begin{align}
    u(x)=\lambda e^{-\alpha_\lambda |x|^{\frac{p(|n+\tau-p|-ap)}{(p-1)|n+\tau-p|}}},\nonumber
\end{align}
where $\lambda\neq0$ and $\alpha_\lambda$ is a positive normalization factor that depends on $\lambda$.
\end{theorem}
\begin{theorem}\label{thm2.4}
    Let $E\subset\R^n\;(n\geq1)$ be an open convex cone and $w:E\rightarrow\R_+$ be a log-concave $C^1$ homogeneous function with degree $\tau\geq0$. Assume $p=n+\tau>1$ and $0<s<1$. Then for every function $u\in D^{n+\tau}_s(w;E)$ with $\int_E|x|^{-(n+\tau)s}|u|^{n+\tau}wdx=1$, we have
\begin{align}\label{qq11}
    \int_{E}|x|^{-(n+\tau)s}|u|^{n+\tau}\ln{|u|^{n+\tau}}wdx\leq \ln\left(\LL_{w,n+\tau,s}\int_{E}|\nabla u|^{n+\tau}wdx\right).
\end{align}
(Here we say $u\in D^{n+\tau}_s(w;E)$ if both $|x|^{s}u$ and $\nabla u$ belong to the weighted Lebesgue space $L^{n+\tau}(w;E)$.) The sharp constant $\LL_{w,n+\tau,s}$ equals $\LL_{w,n+\tau}(1-s)^{1-n-\tau}$, and all the extremizers are given by the Gaussians:
\begin{align}
    u(x)=\lambda e^{-\alpha_\lambda|x|^{\frac{(n+\tau)(1-s)}{n+\tau-1}}},\nonumber
\end{align}
where $\lambda\neq 0$ and $\alpha_\lambda$ is a positive normalization factor that depends on $\lambda$.
\end{theorem}
\begin{proof}[Proof of the above two theorems]
    For Theorem \ref{thm2.3}, we consider the transformation $u\rightarrow\Bar{u}(r\theta):=k^{\frac{1}{p}}u(r^k\theta)$ with $k=\frac{|n+\tau-p|}{|n+\tau-p|-ap}$. For Theorem \ref{thm2.4}, we consider the transformation $u\rightarrow\Bar{u}(r\theta):=k^{\frac{1}{n+\tau}}u(r^k\theta)$ with $k=\frac{1}{1-s}$. Note that under these transformations, the underlying domain $E$ is preserved, and the homogeneous weight $w$ behaves well: $w(r^k\theta)=r^kw(\theta)$. Therefore, based on the results \eqref{qq6}-\eqref{qq9}, we can argue similarly as in the proof of Theorem \ref{thm1} to obtain \eqref{qq10}, \eqref{qq11}, as well as their optimizers.
\end{proof}

Finally, we investigate potential stability estimates for the inequalities \eqref{qq4} and \eqref{qq5}. As in the proof of Theorem \ref{thm3}, for every stability result of the standard $L^p$-logarithmic Sobolev inequality \eqref{qq1}, it is possible to extend it to the weighted inequalities \eqref{qq4} and \eqref{qq5}, at least for centrally symmetric functions. To illustrate this, we consider the following stability estimate obtained by by Dolbeault, Esteban, Figalli, Frank and Loss \cite{Dol}. We also refer interested readers to \cite{Dol} for a detailed survey on the stability results of the logarithmic Sobolev inequality. 

There exists a positive constant $\beta>0$ such that for any $n\in\N_+$ and $u\in D_0^2(\R^n)$ with $\int_{\R^n}|u|^2dx=1$, it holds that
\begin{align}\label{qq12}
    \frac{n}{2}\ln\left(\LL_2\int_{\R^n}|\nabla u|^2dx\right)-\int_{\R^n}|u|^2\ln{|u|^2}dx\geq \beta\inf_{\lambda\neq 0,x_0\in\R^n}\int_{\R^n}\left|u-\lambda e^{-\alpha_\lambda |x+x_0|^2} \right|^2dx.
\end{align}
Here, $\alpha_\lambda$ is a positive constant such that $\int_{\R^n}\lambda^2 e^{-2\alpha_\lambda |x|^2}dx=1$. We remark that although the estimate \eqref{qq12} appears slightly stronger than \cite[Corollary 4.4]{Dol}, they are indeed equivalent. This can be seen by following similar approaches as in, for example, the proof of \cite[Theorem 3.3]{Caz1}.

Our final result extends the stability estimate \eqref{qq12} to \eqref{qq4} and \eqref{qq5}. For simplicity, we only state the case $n\geq3$, while the remaining cases $n=1,2$ can be similarly formulated.
\begin{theorem}\label{thm2.5}
    There exists a positive constant $\beta'>0$ such that for any $n\geq 3$, $0<a<\frac{n-2}{2}$, and centrally symmetric function $u\in D_a^2(\R^n)$ with $\int_{\R^n}|x|^{-\frac{2an}{n-2}}|u|^2dx=1$, it holds that
\begin{align}\label{qq13}
    \frac{n}{2}\ln\left(\LL_{2,a}\int_{\R^n}\frac{|\nabla u|^2}{|x|^{2a}}dx\right)-\int_{\R^n}\frac{|u|^2\ln{|u|^2}}{|x|^{\frac{2an}{n-2}}}dx\geq \beta'\inf_{\lambda\neq 0}\int_{\R^n}\frac{\left|u-\lambda e^{-\alpha_\lambda |x|^\frac{2(n-2-2a)}{n-2}} \right|^2}{|x|^{\frac{2an}{n-2}}}dx.
\end{align}
Here, $\alpha_\lambda$ is a positive constant such that $\int_{\R^n}|x|^{-\frac{2an}{n-2}}\lambda^2 e^{-2\alpha_\lambda |x|^\frac{2(n-2-2a)}{n-2}}dx=1$.
\end{theorem}
\begin{proof}
    Consider the transformation $u\rightarrow\Bar{u}(r\theta):=k^{\frac{1}{2}}u(r^k\theta)$ with $k=\frac{n-2}{n-2-2a}$. The estimate \eqref{qq13} can be derived from \eqref{qq12} using similar arguments as in the proof of Theorem \ref{thm3}, with a bit more care regarding the stability constant. Specifically, to ensure that the stability constant $\beta'$ is independent of the dimension $n$ and the constant $a$, we only need to show the following: there exists a positive constant $C$ such that for any $n\in\N_+$, $\alpha>0$, and $x_0\in\R^n$, it holds that
    \begin{align}\label{qq14}
        \int_{\R^n}\left|e^{-\alpha |x|^2}- e^{-\alpha |x+x_0|^2} \right|^2dx\leq C\int_{\R^n}\left|e^{-\alpha |x-x_0|^2}- e^{-\alpha |x+x_0|^2} \right|^2dx.
    \end{align}
    Up to a suitable scaling and rotation, we can assume $\alpha=1$ and $x_0=(|x_0|,0,\cdots,0)$. Now, \eqref{qq14} is equivalent to
    \begin{align}
        \int_{\R^{n-1}}e^{-|x'|^2}\int_{\R}\left(\left(e^{-x_1^2}- e^{-|x_1+|x_0||^2} \right)^2-C\left(e^{-|x_1-|x_0||^2}- e^{-|x_1+|x_0||^2} \right)^2\right)dx_1dx'\leq 0,\nonumber
    \end{align}
    where $x'=(x_2,\cdots,x_n)$. It suffices to choose $C$ such that \eqref{qq14} holds for $n=1$ and $\alpha=1$. The existence of such a $C$ follows directly from the mean value theorem.
\end{proof}

\subsection{The logarithmic Hardy inequality}\label{sec3.3}
In this subsection, we consider the following logarithmic Hardy inequality: assume $n\geq1$, $a<\frac{n-2}{2}$, $\gamma\geq \frac{n}{4}$, and $\gamma>\frac{1}{2}$ if $n=2$. Then, for every function $u\in D_a^2(\R^n)$ satisfying $\int_{\R^n}\frac{|u|^2}{|x|^{2(a+1)}}dx=1$, the following inequality holds:
\begin{align}\label{uu1}
    \int_{\R^n}\frac{|u|^2}{|x|^{2(a+1)}}\ln{\left(|x|^{n-2-2a}|u|^2\right)}dx\leq 2\gamma\ln\left(\C_{a}\int_{\R^n}\frac{|\nabla u|^2}{|x|^{2a}}dx\right).
\end{align}
Here, $\C_a$ denotes the sharp constant. The inequality \eqref{uu1} was obtained by Del Pino, Dolbeault, Filippas and Tertikas \cite{del1} using the endpoint differentiation technique applied to a special family of Caffarelli-Kohn-Nirenberg inequalities. Moreover, in the radial case, they showed that \eqref{uu1} holds with $\gamma\geq\frac{1}{4}$ for any $n\geq1$, and found the sharp constant $\C_{\text{rad},a}$ along with explicit optimizers. A connection between \eqref{uu1}, the Hardy inequality \eqref{kk1}, and the logarithmic Sobolev inequality was also explored in \cite{del1}. After the work \cite{del1}, to the best of our knowledge, there have been very few results concerning the general sharp constant $\C_a$ and the classification of extremizers. Here, we summarize several results.

\vskip0.1in

In \cite{Dol0}, Dolbeault and Esteban showed that for $n\geq 2$ and $\gamma>\frac{n}{4}$, the inequality \eqref{uu1} admits an extremal function. In the case $n\geq3$ and $\gamma=\frac{n}{4}$, there exists a constant $a'\in[-\infty,\frac{n-2}{2})$ such that when $a<a'$, the inequality \eqref{uu1} admits no extremal functions; and when $a>a'$, it admits an extremal function. An explicit upper bound of $a'$ was also obtained in \cite{Dol0}. In \cite{Dol2}, Dolbeault, Esteban, Tarantello and Tertikas showed that for $n\geq2$, there exists a continuous function $a^*:\;(\frac{n}{4},+\infty)\rightarrow(-\infty,\frac{n-2}{2})$ such that when $\gamma>\frac{n}{4}$, $\C_a=\C_{\text{rad},a}$ if and only if $a\geq a^*(\gamma)$. An explicit lower bound for $a^*$ was also derived. Later, in \cite{Dol0.5}, Dolbeault, Esteban, Filippas and Tertikas showed that when $n=2$ or $3$, $\gamma>\frac{3}{4}$, or $n\geq4$ and $\gamma\geq\frac{n}{4}$, there exists an explicit constant $a(n,\gamma)<\frac{n-2}{2}$ such that when $a\geq a(n,\gamma)$, any optimal function of \eqref{uu1} is radially symmetric.

\vskip0.1in

In the same spirit as Theorem \ref{thm1}, we can make some complements to the conclusions mentioned above. The notations $a'$, $a^*$ and $a(n,\gamma)$ are defined as above.
\begin{theorem}\label{thm3.1}
    When $n\geq 2$, $\gamma>\frac{n}{4}$, and $a^*(\gamma)<a<\frac{n-2}{2}$, any extremal function of \eqref{uu1} is radially symmetric. Moreover, a function $u\in D_a^2(\R^n)$ is a radial extremal function of \eqref{uu1} if and only $\Bar{u}\in D_{a^*(\gamma)}^2(\R^n)$ is a radial extremal function of \eqref{uu1} with $a=a^*(\gamma)$. Here, $\Bar{u}(|x|):=k^{\frac{1}{2}}u(|x|^k)$ with $k=\frac{n-2-2a^*(\gamma)}{n-2-2a}$.
\end{theorem}
\begin{theorem}\label{thm3.2}
    When $n\geq3$ and $\gamma=\frac{n}{4}$, there exists a constant $a_0\in[a',\frac{n-2}{2}]$ such that when $a'<a<a_0$, $\C_a<\C_{\text{rad},a}$, and when $a_0<a<\frac{n-2}{2}$, $\C_a=\C_{\text{rad},a}$ and any extremal function of \eqref{uu1} is radially symmetric. In particular, when $n\geq 4$, $a_0\leq a(n,\gamma)<\frac{n-2}{2}$.
\end{theorem}
\begin{proof}[Proof of the above two theorems]
    For Theorem \ref{thm3.1}, we use the transformation $u\rightarrow\Bar{u}(r\theta):=k^{\frac{1}{p}}u(r^k\theta)$ with $k=\frac{n-2-2a^*(\gamma)}{n-2-2a}$. For Theorem \ref{thm3.2}, we can assume there exists some $a<\frac{n-2}{2}$ such that $\C_a=\C_{\text{rad},a}$; otherwise, we can take $a_0=\frac{n-2}{2}$. Define
    \begin{align}
        a_0=\inf\left\{a\in \left(a',\frac{n-2}{2}\right)\;\Big|\;\C_a=\C_{\text{rad},a}\right\}.\nonumber
    \end{align}
    If $a_0>a'$, by continuity we know $\C_{a_0}=\C_{\text{rad},a_0}$. Now, for any $a_0<a<\frac{n-2}{2}$ and $u\in D_a^2(\R^n)$, we can use the transformation $u\rightarrow\Bar{u}(r\theta):=k^{\frac{1}{2}}u(r^k\theta)$ with $k=\frac{n-2-2a_0}{n-2-2a}$. If $a_0=a'$, for any $a_0<a<\frac{n-2}{2}$ and $u\in D_a^2(\R^n)$, we can choose $a_1\in(a',a)$ such that $\C_{a_1}=\C_{\text{rad},a_1}$. Then we can use the transformation $u\rightarrow\Bar{u}(r\theta):=k^{\frac{1}{2}}u(r^k\theta)$ with $k=\frac{n-2-2a_1}{n-2-2a}$. The remaining arguments are similar to those in the proof of Theorem \ref{thm1} and are thus omitted.
\end{proof}

\subsection{The Hardy-Sobolev-Morrey inequality}\label{sec3.4}
Recall the classical Hardy inequality \eqref{kk1}. The restriction $p<n$ is necessary for integrability near the origin. However, if we assume $u\in C_c^{\infty}(\R^n\backslash\{0\})$, then \eqref{kk1} can be extended to the case $p>n\geq1$:
\begin{align}\label{oo1}
    \int_{\R^n}|\nabla u|^pdx\geq\left(\frac{p-n}{p}\right)^p\int_{\R^n}\frac{|u|^p}{|x|^p}dx.
\end{align}
The constant $\left(\frac{p-n}{p}\right)^p$ is optimal but cannot be attained by nontrivial functions. Motivated by the pioneering work of Br\'ezis and V\'azquez \cite{Bre1}, as well as the classical Sobolev-Morrey inequality, Psaradakis \cite{Psa} improved the inequality \eqref{oo1} by adding a suitable H\"older norm of $u$. Specifically, the following two Hardy-Morrey-type inequalities have been established:

$(1)$ For any domain $0\in\Omega\subset\R^n$ with finite volume, $p>n\geq1$, and for every $u\in C_c^\infty(\Omega\backslash\{0\})$, there exists a positive constant $C=C(n,p)$ such that
\begin{align}\label{oo2}
     \left(\int_{\Omega}|\nabla u|^pdx-\left(\frac{p-n}{p}\right)^p\int_{\Omega}\frac{|u|^p}{|x|^p}dx\right)^{\frac{1}{p}}\geq C|\Omega|^{\frac{1}{p}-\frac{1}{n}}\sup_{x\in\Omega}|u(x)|.
\end{align}

$(2)$ For any bounded domain $0\in\Omega\subset\R^n$, $p>n\geq1$, and for every $u\in C_c^\infty(\Omega\backslash\{0\})$, there exist two positive constants $C=C(n,p)$ and $B=B(n,p)\geq 2$ such that
\begin{align}\label{oo3}
    \left(\int_{\Omega}|\nabla u|^pdx-\left(\frac{p-n}{p}\right)^p\int_{\Omega}\frac{|u|^p}{|x|^p}dx\right)^{\frac{1}{p}}\geq\;C\sup_{x\neq y}\left\{\frac{|u(x)-u(y)|}{|x-y|^{1-\frac{n}{p}}}X^{\frac{1}{p}}\left(\frac{|x-y|}{D}\right)\right\},
\end{align}
where $D:=B\sup\limits_{x\in\Omega} |x|$ and $X(t):=(1-\ln{t})^{-1}$. Moreover, the weight function $X^{\frac{1}{p}}$ is optimal in the sense that the exponent $\frac{1}{p}$ cannot be decreased.

By applying suitable transformations, we can establish the following Hardy-Sobolev-Morrey-type inequalities, which extend \eqref{oo2} and \eqref{oo3} to their weighted versions.
\begin{theorem}\label{thm4.1}
    For any domain $0\in\Omega\subset\R^n$ with finite volume, $p>n\geq1$, and $0<a<\frac{p-n}{p}$, there exist two positive constants $C_i=C_i(n,p,a)$ for $i=1,2$ such that for every $u\in C_c^\infty(\Omega\backslash\{0\})$, the following inequality holds:
\begin{align}\label{oo4}
     \left(\int_{\Omega}\frac{|\nabla u|^p}{|x|^{-ap}}dx-\left(\frac{p-n-pa}{p}\right)^p\int_{\Omega}\frac{|u|^p}{|x|^{p(1-a)}}dx\right)^{\frac{1}{p}}\geq&\; C_1|\Omega^{(p,a)}|^{\frac{1}{p}-\frac{1}{n}}\sup_{x\in\Omega}|u(x)|\nonumber\\
     \geq&\;C_2|\Omega|^{\frac{1}{p}-\frac{1-a}{n}}\sup_{x\in\Omega}|u(x)|,
\end{align}
where $\Omega^{(p,a)}:=\left\{x\;\big|\;|x|^{\frac{ap}{p-n-pa}}x\in\Omega\right\}$.
\end{theorem}
\begin{theorem}\label{thm4.2}
    For any bounded domain $0\in\Omega\subset\R^n$, with $p>n\geq1$ and $0<a<\frac{p-n}{p}$, there exist two positive constants $C=C(n,p,a)$ and $B=B(n,p,a)\geq 2$ such that for every $u\in C_c^\infty(\Omega\backslash\{0\})$, the following inequality holds:
\begin{align}\label{oo5}
     \left(\int_{\Omega}\frac{|\nabla u|^p}{|x|^{-ap}}dx-\left(\frac{p-n-pa}{p}\right)^p\int_{\Omega}\frac{|u|^p}{|x|^{p(1-a)}}dx\right)^{\frac{1}{p}}\geq C\sup_{x\neq y}\left\{\frac{|u(x)-u(y)|}{|x-y|^{1-a-\frac{n}{p}}}X^{\frac{1}{p}}\left(\frac{|x-y|}{D}\right)\right\},
\end{align}
where $D:=B\sup\limits_{x\in\Omega} |x|$ and $X(t):=(1-\ln{t})^{-1}$. Moreover, the exponent $\frac{1}{p}$ of $X$ cannot be decreased.
\end{theorem}
\begin{proof}[Proof of the above two theorems]
    Similar to the proofs of Theorem \ref{thm1} and Theorem \ref{thm2}, most of this proof can be obtained by using the transformation $u\rightarrow\Bar{u}(r\theta):=k^{\frac{1}{p}}u(r^k\theta)$ with $k=\frac{p-n}{p-n-pa}$ to reduce the weighted estimates \eqref{oo4} and \eqref{oo5} to their non-weighted versions, and then applying the results \eqref{oo2} and \eqref{oo3}. Here, we emphasize three points in this procedure.

    The first point is that among domains with fixed volume, the domain $\Omega$ for which $|\Omega^{(p,a)}|$ attains the maximum is precisely the ball centered at the origin. From this, one can easily deduce that $|\Omega^{(p,a)}|\leq C|\Omega|^{1-\frac{ap}{p-n}}$. (Note that this fact has been used in the proof of Theorem \ref{thm1.1}.)

    The second point is that to derive \eqref{oo5}, we need the following two simple assertions regarding the term $|x-y|$:

    $(1)$ For any positive constants $\alpha$ and $\beta$, the function $f(t):=t^{-\alpha}X^\beta(ct)$ is decreasing on $[0,1]$ when $0<c=c(\alpha,\beta)$ is sufficiently small.
    
    $(2)$ For any $0<s<1$ and $x,y\in\R^n\backslash\{0\}$, there exists a positive constant $C=C(s)$ such that
    \begin{align}\label{oo6}
        \left|\frac{x}{|x|^s}-\frac{y}{|y|^s}\right|\leq C|x-y|^{1-s}.
    \end{align}

    The first assertion $(1)$ follows from straightforward computation. As for the second assertion $(2)$, 
    by symmetry, homogeneity and rotational invariance, we can assume $n=2$, $x=(1,0)$, $y=(1+\varepsilon,\delta)$ and $|y|\leq 1$. Now, the inequality \eqref{oo6} is equivalent to
    \begin{align}\label{oo7}
        f(\varepsilon,\delta):=\left(1-\frac{1+\varepsilon}{\left((1+\varepsilon)^2+\delta^2\right)^\frac{s}{2}}\right)^2+\frac{\delta^2}{\left((1+\varepsilon)^2+\delta^2\right)^s}\leq C|\varepsilon|^{2-2s}+C|\delta|^{2-2s}.
    \end{align}
    Since $|y|\leq 1$, the left-hand side of \eqref{oo7} is bounded above, so we only need to consider the case when $|\varepsilon|+|\delta|$ is small. Note that $f$ is a nonnegative smooth function near $(0,0)$ and $f(0,0)=0$. Therefore, $f(\varepsilon,\delta)=\mathcal{O}(\varepsilon^2+\delta^2)$. The existence of the constant $C$ now follows directly. 

    The third point concerns the optimality of the exponent $\frac{1}{p}$ of $X$. From \cite[Section 7]{Psa}, we know that for any bounded domain $0\in\Omega_0\subset\R^n$, there exists a family of radially symmetric functions $v_\delta\in C_c^\infty(\Omega_0\backslash\{0\})$ for $0<\delta<1$ such that for any $B_0\geq 2$ and $0<\varepsilon<1$, it holds that
    \begin{align}\label{oo8}
        \frac{\left(\int_{\Omega_0}|\nabla v_\delta|^pdx-\left(\frac{p-n}{p}\right)^p\int_{\Omega_0}\frac{|v_\delta|^p}{|x|^p}dx\right)^{\frac{1}{p}}}{\sup\limits_{x\neq 0}\left\{\frac{|v_\delta(x)|}{|x|^{1-\frac{n}{p}}}X^{\frac{\varepsilon}{p}}\left(\frac{|x|}{D_0}\right)\right\}}\rightarrow0\quad\text{as }\delta\rightarrow0,
    \end{align}
    where $D_0:=B_0\sup\limits_{x\in\Omega_0} |x|$. Note that the optimality of the inequality \eqref{oo3} follows directly from \eqref{oo8}. In our case, we set $\Omega_0=\Omega^{(p,a)}$ in \eqref{oo8} and take a family of functions $u_\delta$ such that $\Bar{u}_\delta=v_\delta$. Due to the radial symmetry, direct computation shows that the estimate \eqref{oo8} is equivalent to
    \begin{align}\label{oo9}
        \frac{\left(\int_{\Omega}\frac{|\nabla u_\delta|^p}{|x|^{-ap}}dx-\left(\frac{p-n-pa}{p}\right)^p\int_{\Omega}\frac{|u_\delta|^p}{|x|^{p(1-a)}}dx\right)^{\frac{1}{p}}}{\sup_{x\neq 0}\left\{\frac{|u_\delta(x)|}{|x|^{1-a-\frac{n}{p}}}X^{\frac{\varepsilon}{p}}\left(\frac{|x|}{D}\right)\right\}}\rightarrow0\quad \text{as }\delta\rightarrow0.
    \end{align}
    The sharpness of the exponent $\frac{1}{p}$ follows directly from \eqref{oo9}.
\end{proof}

Our next results concerns the following estimate obtained by Gkikas and Psaradakis \cite{Gki}, which provides a series improvement of the inequality \eqref{oo3}. For any bounded domain $0\in\Omega\subset\R^n$, with $p>n\geq2$, $m\in\N$, and for every $u\in C_c^\infty(\Omega\backslash\{0\})$, there exist constants $C=C(n,p,m)>0$ and $B=B(n,p,m)\geq 2$ such that
\begin{align}\label{oo10}
    \Big(\int_{\Omega}|\nabla u|^pdx-\left(\frac{p-n}{p}\right)^p\int_{\Omega}\frac{|u|^p}{|x|^p}dx&-\frac{p-1}{2p}\left(\frac{p-n}{p}\right)^{p-2}\sum_{i=1}^m\int_{\Omega}\frac{|u|^p}{|x|^p}Y_i^2\left(\frac{|x|}{D}\right)dx\Big)^{\frac{1}{p}}\nonumber\\
    \geq&\; C\sup_{x\neq y}\left\{\frac{|u(x)-u(y)|}{|x-y|^{1-\frac{n}{p}}}Y_{m+1}^{\frac{1}{p}}\left(\frac{|x-y|}{D}\right)\right\},
\end{align}
where $D:=B\sup\limits_{x\in\Omega}|x|$, $Y_j(t):=\prod\limits_{i=1}^jX_j(t)$, $X_j(t):=X_1(X_{j-1}(t))$, and $X_1(t):=(1-\ln{t})^{-1}$. Moreover, the weight function $Y_{m+1}^{\frac{1}{p}}$ is optimal in the sense that the exponent $\frac{1}{p}$ on $X_{m+1}$ cannot be decreased.

Following similar arguments as in the proofs of Theorem \ref{thm1.4} and Theorem \ref{thm4.2}, we can establish a series improvement of the inequality \eqref{oo5} in the same spirit as \eqref{oo10}:
\begin{theorem}\label{thm4.3}
    For any bounded domain $0\in\Omega\subset\R^n$, with $p>n\geq2$, $0<a<\frac{p-n}{p}$, $m\in\N$, and for every $u\in C_c^\infty(\Omega\backslash\{0\})$, there exist constants $C=C(n,p,m,a)>0$ and $B=B(n,p,m,a)\geq 2$ such that
\begin{align}\label{oo11}
    \Big(&\;\int_{\Omega}\frac{|\nabla u|^p}{|x|^{-ap}}dx-\left(\frac{p-n-pa}{p}\right)^p\int_{\Omega}\frac{|u|^p}{|x|^{p(1-a)}}dx\nonumber\\
    &\;-\frac{p-1}{2p}\left(\frac{p-n-pa}{p}\right)^{p-2}\sum_{i=1}^m\int_{\Omega}\frac{|u|^p}{|x|^{p(1-a)}}Y_i^2\left(\frac{|x|}{D}\right)dx\Big)^{\frac{1}{p}}\nonumber\\
    \geq&\; C\sup_{x\neq y}\left\{\frac{|u(x)-u(y)|}{|x-y|^{1-a-\frac{n}{p}}}Y_{m+1}^{\frac{1}{p}}\left(\frac{|x-y|}{D}\right)\right\},
\end{align}
where $D$ and $Y_j$ are defined as above. Moreover, the weight function $Y_{m+1}^{\frac{1}{p}}$ is optimal.
\end{theorem}
\begin{proof}
    The inequality \eqref{oo11} can be derived from \eqref{oo10} using similar arguments as in the proofs of Theorem \ref{thm1.4} and Theorem \ref{thm4.2}. Regarding the sharpness of the exponent $\frac{1}{p}$ of $X_{m+1}$, we can follow the reasoning presented in Theorem \ref{thm1.4}. Specifically, if for some $0<\varepsilon<1$, the inequality \eqref{oo11} holds with the weight function $Y_m^{\frac{1}{p}}X_{m+1}^{\frac{\varepsilon}{p}}$, then there exists a number $\gamma<2$ such that the inequality
    \begin{align}
        &\;\int_{\Omega}\frac{|\nabla u|^p}{|x|^{-ap}}dx-\left(\frac{p-n-pa}{p}\right)^p\int_{\Omega}\frac{|u|^p}{|x|^{p(1-a)}}dx\nonumber\\
    &\;-\frac{p-1}{2p}\left(\frac{p-n-pa}{p}\right)^{p-2}\sum_{i=1}^m\int_{\Omega}\frac{|u|^p}{|x|^{p(1-a)}}Y_i^2\left(\frac{|x|}{D}\right)dx\nonumber\\
    \geq&\;C\int_{\Omega}\frac{|u|^p}{|x|^{p(1-a)}}Y_m^2\left(\frac{|x|}{D}\right)X_{m+1}^\gamma\left(\frac{|x|}{D}\right) dx\nonumber
    \end{align}
    holds for all $u\in C_c^\infty(\Omega\backslash\{0\})$. However, this leads to a contradiction with \cite[Theorem 2]{Barb0}.
\end{proof}

\subsection{The Hardy-Sobolev interpolation inequality}\label{sec3.5}
Based on the improved Hardy-type inequalities established in works such as \cite{Abd,Bre1}, one can  derive the following Hardy-Gagliardo-Nirenberg type interpolation inequality: for any $1<p<n$ and $p<q<\frac{np}{n-p}$, there exist a number $\gamma=\gamma(n,p,q)\in(0,1)$ and a positive constant $C=C(n,p,q)$ such that the inequality
\begin{align}\label{ii1}
    \left(\int_{\R^n}|\nabla u|^pdx-\left(\frac{n-p}{p}\right)^p\int_{\R^n}\frac{|u|^p}{|x|^p}dx\right)^{\frac{\gamma}{p}}\left(\int_{\R^n}|u|^pdx\right)^\frac{1-\gamma}{p}\geq C\left(\int_{\R^n}|u|^qdx\right)^\frac{1}{q}
\end{align}
holds for every $u\in W^{1,p}(\R^n)$. The restriction $q<\frac{np}{n-p}$ is necessary, as it is not possible to improve the sharp Hardy inequality \eqref{kk1} by adding the $L^\frac{np}{n-p}$-norm of $u$ (see, for example, \cite{Bre1,Fil}).

Recently, in \cite{Die}, by suitably modifying the terms in \eqref{ii1}, Dietze and Nam obtained an interesting Hardy-Sobolev interpolation inequality that allows for the critical Sobolev norm:  when  $p\geq 2$, $p<n\leq p+1$  and $\gamma\in\left[1-\frac{p}{n},\frac{1}{n}\right]$, then there exists a positive constant $C=C(n,p,\gamma)$ such that the inequality
\begin{align}\label{ii2}
     \left(\int_{\R^n}|\nabla u|^pdx-\left(\frac{n-p}{p}\right)^p\sup_{y\in\R^n}\int_{\R^n}\frac{|u(x)|^p}{|x-y|^p}dx\right)^{\gamma}\left(\sup_{y\in\R^n}\int_{\R^n}\frac{|u(x)|^p}{|x-y|^p}dx\right)^{1-\gamma}\geq C\norm*{u}_{L^\frac{np}{n-p}}^p
\end{align}
holds for any $u\in D_0^p(\R^n)$. Moreover, the lower bound of $\gamma$ is always sharp, and the upper bound is sharp when $p=2$ and $n=3$. It is worth mentioning that generally, one cannot remove the term $\sup\limits_{y\in\R^n}$ in \eqref{ii2}; otherwise, by translating a fixed function $u$ to infinity, a contradiction arises. However, in the radial setting, the inequality \eqref{ii2} can be enhanced. Specifically, from \cite{Die}, when $n\geq3$, $p=2$, $\gamma=\frac{1}{n}$, and $u\in D_{0,\text{rad}}^2(\R^n)$, it holds that
\begin{align}\label{ii3}
    \left(\int_{\R^n}|\nabla u|^2dx-\left(\frac{n-2}{2}\right)^2\int_{\R^n}\frac{|u(x)|^2}{|x|^2}dx\right)^{\gamma}\left(\int_{\R^n}\frac{|u(x)|^2}{|x|^2}dx\right)^{1-\gamma}\geq C_{\text{rad}}\norm*{u}_{L^\frac{2n}{n-2}}^2.
\end{align}
Moreover, the sharp constant $C_{\text{rad}}$ can be attained, and all the optimizers of \eqref{ii3} are given by the functions:
\begin{align}\label{ii4}
    u(x)=\alpha|x|^{-\frac{n-2}{2}}\left(\frac{|x|^\eta}{\beta+|x|^{2\eta}}\right)^\frac{n-2}{2},
\end{align}
where $\alpha\in\R$, $\beta>0$ and $\eta>0$. Additionally, $\frac{1}{n}$ is the only possible value of $\gamma$ such that the inequality \eqref{ii3} holds. A result closely related to \eqref{ii2} is the following improved Sobolev inequality involving Morrey norms obtained by Palatucci and Pisante \cite{Pal}: for $1\leq p<n$, $\gamma\in[1-\frac{p}{n},1)$, and $u\in D_0^p(\R^n)$, it holds that
\begin{align}\label{ii5}
    \left(\int_{\R^n}|\nabla u|^pdx\right)^{\gamma}\left(\sup_{R>0,x\in\R^n}R^{-p}\int_{B(x,R)}|u|^pdx\right)^{1-\gamma}\geq C\norm*{u}_{L^\frac{np}{n-p}}^p.
\end{align}
Note that a direct corollary of \eqref{ii5} is
\begin{align}\label{ii6}
    \left(\int_{\R^n}|\nabla u|^pdx\right)^{\gamma}\left(\sup_{y\in\R^n}\int_{\R^n}\frac{|u(x)|^p}{|x-y|^p}dx\right)^{1-\gamma}\geq C\norm*{u}_{L^\frac{np}{n-p}}^p,
\end{align}
which is a weaker version of \eqref{ii2} in some special cases. The inequality \eqref{ii6} plays an important role in the proof of \eqref{ii2}, and it provides the sharp lower bound $1-\frac{p}{n}$ for the exponent $\gamma$ in \eqref{ii2}.

\vskip0.2in
Here we aim to improve the Hardy-Sobolev inequality \eqref{kk3} in the same spirit as the inequalities \eqref{ii2} and \eqref{ii3}. The radial case is straightforward. However, for the general case, since both integrals of \eqref{kk3} have radial weights, it seems unnatural to introduce terms like $\sup\limits_{y\in\R^n}$. Additionally, there appear to be no weighted versions of the inequality \eqref{ii5} or \eqref{ii6}. Therefore, we only consider local improvements, namely, we assume that
\begin{align}\label{ii7}
    \int_{\R^n}|x|^{-ap}|\nabla u|^pdx\leq2\left(\frac{n-p-pa}{p}\right)^p\int_{\R^n}|x|^{-(a+1)p}|u|^pdx.
\end{align}
The number $2$ above is not crucial and can be replaced by any positive number larger than $1$. Our result is stated as follows.
\begin{theorem}\label{thm5.1}
    Assume $n>p\geq2$, $0\leq a<\frac{n-p}{p}$, and $\gamma\in[0,\frac{1}{n}]$. Then there exists a positive constant $C=C(n,p,a,\gamma)$ such that the inequality
    \begin{align}\label{ii8}
        \left(\int_{\R^n}\frac{|\nabla u|^p}{|x|^{ap}}dx-\left(\frac{n-p-pa}{p}\right)^p\int_{\R^n}\frac{|u|^p}{|x|^{p(a+1)}}dx\right)^{\gamma}\left(\int_{\R^n}\frac{|u|^p}{|x|^{p(a+1)}}dx\right)^{1-\gamma}\geq C\norm*{|x|^{-a}u}_{L^\frac{np}{n-p}}^p
    \end{align}
    holds for any $u\in D_a^p(\R^n)$ satisfying \eqref{ii7}. Moreover, when $p=2$, the upper bound $\frac{1}{n}$ for $\gamma$ is optimal.
\end{theorem}
\begin{theorem}\label{thm5.2}
    Assume $n\geq3$, $0<a<\frac{n-2}{2}$, $\gamma=\frac{1}{n}$, and $u\in D_{a,\text{rad}}^2(\R^n)$. Then it holds that
    \begin{align}\label{ii9}
        \left(\int_{\R^n}\frac{|\nabla u|^2}{|x|^{2a}}dx-\left(\frac{n-2-2a}{2}\right)^2\int_{\R^n}\frac{|u|^2}{|x|^{2(a+1)}}dx\right)^{\gamma}\left(\int_{\R^n}\frac{|u|^2}{|x|^{2(a+1)}}dx\right)^{1-\gamma}\geq C_{\text{rad}}\norm*{|x|^{-a}u}_{L^\frac{2n}{n-2}}^2,
    \end{align}
    where the sharp constant $C_{\text{rad}}$ is the same as that in \eqref{ii3}, and all the extremizers are given by the functions:
    \begin{align}
    u(x)=\alpha|x|^{-\frac{n-2-2a}{2}}\left(\frac{|x|^\eta}{\beta+|x|^{2\eta}}\right)^\frac{n-2}{2},\nonumber
\end{align}
where $\alpha\in\R$, $\beta>0$ and $\eta>0$. Moreover, the inequality \eqref{ii9} does not hold when $\gamma\neq\frac{1}{n}$.
\end{theorem}
\begin{proof}[Proof of the above two theorems]
    For Theorem \ref{thm5.2}, we consider the transformation $$u\rightarrow\Bar{u}(r\theta) :=k^{\frac{1}{2}}u(r^k\theta); \quad k=\frac{n-2}{n-2-2a}.$$  Direct computation shows that the estimate \eqref{ii9} is  indeed equivalent to \eqref{ii3}. This can also be verified using the ground state transformation $v(x):=|x|^{\frac{n-2-2a}{2}}u(x)$. For Theorem \ref{thm5.1}, we consider the transformation $u\rightarrow\Bar{u}(r\theta):=k^{\frac{1}{p}}u(r^k\theta)$ with $k=\frac{n-p}{n-p-pa}$. Note that the condition \eqref{ii7} is preserved under this transformation. Therefore, in the same spirit as Theorem \ref{thm1}, we only need to prove \eqref{ii8} for the case $a=0$. Additionally, from the assumption \eqref{ii7}, it suffices to consider the case when $\gamma=\frac{1}{n}$ and the ratio $\left(\frac{n-p}{p}\right)^{-p}\frac{\int_{\R^n}|\nabla u|^pdx}{\int_{\R^n}\frac{|u|^p}{|x|^{p}}dx}$ is close to $1$. Consider the function:
    \begin{align}
        f(t):=(1-t)^{\frac{1}{n}}t^{1-\frac{1}{n}},\quad t\in(0,1).\nonumber
    \end{align}
    It is straightforward to show that $f$ is strictly decreasing when $t\geq \frac{n-1}{n}$. Therefore, we obtain
    \begin{align}
         &\;\left(\int_{\R^n}|\nabla u|^pdx-\left(\frac{n-p}{p}\right)^p\int_{\R^n}\frac{|u(x)|^p}{|x|^p}dx\right)^{\frac{1}{n}}\left(\int_{\R^n}\frac{|u(x)|^p}{|x|^p}dx\right)^{1-\frac{1}{n}}\nonumber\\
         \geq&\;\left(\int_{\R^n}|\nabla u|^pdx-\left(\frac{n-p}{p}\right)^p\sup_{y\in\R^n}\int_{\R^n}\frac{|u(x)|^p}{|x-y|^p}dx\right)^{\frac{1}{n}}\left(\sup_{y\in\R^n}\int_{\R^n}\frac{|u(x)|^p}{|x-y|^p}dx\right)^{1-\frac{1}{n}}.\nonumber
    \end{align}
    Now, it suffices to show that the inequality \eqref{ii2} holds under the assumptions that $n>p\geq 2$, $\gamma=\frac{1}{n}$, and the ratio $\left(\frac{n-p}{p}\right)^{-p}\frac{\int_{\R^n}|\nabla u|^pdx}{\sup\limits_{y\in\R^n}\int_{\R^n}\frac{|u(x)|^p}{|x-y|^p}dx}$ is close to $1$. The proof of this is actually contained in \cite[Page 10, Case 2]{Die}. Finally, to establish the sharpness of the upper bound $\frac{1}{n}$ for $\gamma$ in the case $p=2$, one can refer to \cite[Formulas (18)-(20)]{Die}, or see our remark below. Thus, our proof is complete.
\end{proof}
\begin{remark}
    Consider the following test function for the inequality \eqref{ii8}:
    \begin{align}
        u(x):=\begin{cases}
            |x|^{\varepsilon-\frac{n-p-pa}{p}},\quad&\text{if }|x|\leq 1,\\
            |x|^{-\varepsilon-\frac{n-p-pa}{p}},\quad&\text{if }|x|\geq 1,
        \end{cases}\nonumber
    \end{align}
    where $0<\varepsilon<\frac{n-p-pa}{p}$. Direct computation yields:
    \begin{align}
    \norm*{|x|^{-a}u}_{L^\frac{np}{n-p}}^p\approx \varepsilon^{\frac{p}{n}-1},\quad\int_{\R^n}\frac{|u|^p}{|x|^{p(a+1)}}dx\approx \varepsilon^{-1},\nonumber
    \end{align}
    and
    \begin{align}
        \int_{\R^n}\frac{|\nabla u|^p}{|x|^{ap}}dx-\left(\frac{n-p-pa}{p}\right)^p\int_{\R^n}\frac{|u|^p}{|x|^{p(a+1)}}dx\approx \varepsilon.\nonumber
    \end{align}
    Thus, if \eqref{ii8} holds for the function $u$ defined above, we must have:
    \begin{align}
        \varepsilon^{\gamma}\varepsilon^{\gamma-1}\geq C\varepsilon^{\frac{p}{n}-1}\nonumber
    \end{align}
    for every $0<\varepsilon<\frac{n-p-pa}{p}$. This indicates that $\gamma\leq\frac{p}{2n}$. Therefore, when $p=2$, the upper bound $\frac{1}{n}$ for $\gamma$ is sharp. For the case $p>2$, the necessary upper bound $\frac{p}{2n}$ for $\gamma$ is strictly larger than the number $\frac{1}{n}$ that we can prove. Whether $\frac{1}{n}$ is also sharp for $\gamma$ in this case, or if \eqref{ii8} holds for $\gamma\in[\frac{1}{n},\frac{p}{2n}]$, remains an open question.
\end{remark}

\subsection{The interpolated Caffarelli-Kohn-Nirenberg inequality}\label{sec3.6}
In this subsection, we consider the following interpolated Caffarelli-Kohn-Nirenberg inequality:
\begin{align}\label{pp1}
    \left(\int_{\R^n}\frac{|\nabla u|^2}{|x|^{2b}}dx\right)^\frac{1}{2}\left(\int_{\R^n}\frac{|u|^2}{|x|^{2a}}dx\right)^\frac{1}{2}\geq \C_{n,a,b}\int_{\R^n}\frac{|u|^2}{|x|^{a+b+1}}dx,
\end{align}
where $n\geq 3$, $a,b\in\R$, and $u\in C_c^\infty(\R^n\backslash\{0\})$. For the explicit sharp constant $\C_{n,a,b}$ and the classification of extremal functions, we refer to the works \cite{Cat,Caz,Cos}. When $a=b+1$, this inequality reduces to the Hardy-Sobolev inequality \eqref{kk3}. When $a=-1$ and $b=0$, it reduces to the classical Heisenberg-Pauli-Weyl inequality, where $\C_{n,-1,0}=\frac{n}{2}$ and all optimizers are given by standard Gaussian functions of the form $\lambda e^{-\alpha|x|^2}$, where $\lambda\in\R$ and $\alpha>0$. When $a=b=0$, the inequality corresponds to the hydrogen uncertainty principle, which is closely related to the ground states of hydrogenic atoms with Coulomb interaction.

Recently, by establishing various Caffarelli-Kohn-Nirenberg-type and Hardy-type identities, and utilizing suitable Poincar\'e-type inequalities, Cazacu, Flynn, Lam and Lu \cite{Caz1} obtained various stability estimates for the inequality \eqref{pp1} under the assumptions:
\begin{align}\label{pp2}
    n\geq3,\quad0\leq b<\frac{n-2}{2},\quad a=\frac{n+2}{n-2}b-1.
\end{align}
Specifically, from the identities they first easily recovered the results in \cite{Cat,Caz,Cos}: the sharp constant $\C_{n,a,b}=\frac{n-a-b-1}{2}$ and all optimizers are given by Gaussians of the form $\lambda e^{-\alpha|x|^{b+1-a}}$, where $\lambda\in\R$ and $\alpha>0$. Then, by applying suitable Poincar\'e-type inequalities to the identities, they obtained the following stability result: there exist positive constants $C_i=C_i(n,a,b)$ for $i=1,2$ such that
\begin{align}\label{pp3}
   &\; \left(\int_{\R^n}\frac{|\nabla u|^2}{|x|^{2b}}dx\right)^\frac{1}{2}\left(\int_{\R^n}\frac{|u|^2}{|x|^{2a}}dx\right)^\frac{1}{2}- \frac{n-a-b-1}{2}\int_{\R^n}\frac{|u|^2}{|x|^{a+b+1}}dx\nonumber\\
   \geq&\;\begin{cases}
       C_1\inf\limits_{\lambda\in\R,\alpha>0}\int_{\R^n}\frac{\left|u-\lambda e^{-\alpha|x|^{b+1-a}}\right|^2}{|x|^{a+b+1}}dx,\\
       C_2\inf\limits_{\lambda\neq0}\int_{\R^n}\frac{\left|u-\lambda e^{-\alpha_\lambda|x|^{b+1-a}}\right|^2}{|x|^{a+b+1}}dx,
   \end{cases}
\end{align}
where $\alpha_\lambda$ is a positive constant that depends on $\lambda$ such that
\begin{align}\label{pp4}
    \int_{\R^n}\frac{|u|^2}{|x|^{a+b+1}}dx=\int_{\R^n}\frac{\left|\lambda e^{-\alpha_\lambda|x|^{b+1-a}}\right|^2}{|x|^{a+b+1}}dx.
\end{align}
In the particular case $a=-1$ and $b=0$, they showed that the sharp stability constants are $C_1=1$ and $C_2=\frac{1}{2}$. Moreover, the equalities in \eqref{pp3} can be achieved by nontrivial functions.

\vskip0.12in
Besides the stability estimate \eqref{pp3} for the scale-invariant inequality \eqref{pp1}, they also derived the following scale non-invariant stability result: there exists a positive constant $C_3=C_3(n,a,b)$ such that
\begin{align}\label{pp5}
    &\;\int_{\R^n}\frac{|\nabla u|^2}{|x|^{2b}}dx+\int_{\R^n}\frac{|u|^2}{|x|^{2a}}-(n-a-b-1)\int_{\R^n}\frac{|u|^2}{|x|^{a+b+1}}dx\nonumber\\
    \geq&\;C_3\inf_{\lambda\neq 0}\left[\begin{aligned}
        \int_{\R^n}&\frac{\left|u-\lambda e^{-\frac{|x|^{b+1-a}}{b+1-a}}\right|^2}{|x|^{a+b+1}}dx+\int_{\R^n}\frac{\left|u-\lambda e^{-\frac{|x|^{b+1-a}}{b+1-a}}\right|^2}{|x|^{2a}}dx\\
        &+\int_{\R^n}\frac{\left|\nabla \left(u-\lambda e^{-\frac{|x|^{b+1-a}}{b+1-a}}\right)\right|^2}{|x|^{2b}}dx.
    \end{aligned}\right]
\end{align}
In the particular case $a=-1$ and $b=0$, they showed that the sharp stability constant $C_3=\frac{2}{n+3}$ can be achieved by nontrivial functions.

\vskip0.12in

In our opinion, the main reason that the explicit (and sharp) stability constants $C_1,C_2,C_3$ were obtained only in the particular case $a=-1$ and $b=0$ is that the explicit (and sharp) constant for the Poincar\'e-type inequalities they used is only known in this case.

 \vskip0.12in

 Here, by employing a suitable transformation in the same spirit of Theorem \ref{thm2} and Theorem \ref{thm4}, we not only provide a simple recovery of the inequalities \eqref{pp3} and \eqref{pp5}, but also offer explicit stability constants that are straightforward in expression.
\begin{theorem}\label{thm6.1}
    Assume that \eqref{pp2} holds and $u\in C_c^\infty(\R^n)$, then the inequality \eqref{pp3} holds with the constants $C_1=\frac{b+1-a}{2}$ and $C_2=\frac{b+1-a}{4}$. Additionally, the inequality \eqref{pp5} holds with the constant $C_3=\frac{2}{n+3}\left(\frac{b+1-a}{2}\right)^2$.
\end{theorem}
\begin{proof}
    For simplicity, we denote the left-hand side of \eqref{pp3} as $\delta_{a,b}(u)$ and the left-hand side of \eqref{pp5} as $\Bar{\delta}_{a,b}(u)$. Additionally, we use $d^{(1)}_{a,b}(u)$, $d^{(2)}_{a,b}(u)$, and $d^{(3)}_{a,b}(u)$ to represent the three infima that appear on the right-hand side of \eqref{pp3} and \eqref{pp5}, respectively.

    In the case $b=0$ and $a=-1$, the explicit constants $C_1$, $C_2$ and $C_3$ have been obtained in \cite{Caz1}. For the case $b>0$, we consider the transformation $u\rightarrow\Bar{u}(r\theta) :=k^{-\frac{n}{4}}u(k^{-\frac{k}{2}}r^k\theta)$ with $k=\frac{2}{b+1-a}$. As in the equations \eqref{for1}-\eqref{for5}, we can compute the following:
    \begin{align}
         \left(\frac{2}{b+1-a}\right)^2\int_{\R^n}|\nabla \Bar{u}|^2dx\geq\int_{\R^n}\frac{|\nabla u|^2}{|x|^{2b}}dx\geq \int_{\R^n}|\nabla \Bar{u}|^2dx,\nonumber
    \end{align}
    \begin{align}
        \int_{\R^n}\frac{|u|^2}{|x|^{2a}}=\int_{\R^n}|x|^2|\Bar{u}|^2dx,\nonumber
    \end{align}
    and
    \begin{align}
        \int_{\R^n}\frac{|u|^2}{|x|^{a+b+1}}dx=\frac{2}{b+1-a}\int_{\R^n}|\Bar{u}|^2dx.\nonumber
    \end{align}
    Now, following similar  approach as in the proofs of Theorem \ref{thm2} and Theorem \ref{thm4}, we can deduce that
    \begin{align}\label{pp6}
        \delta_{a,b}(u)\geq \delta_{-1,0}(\Bar{u})\geq \begin{cases}
            d^{(1)}_{-1,0}(\Bar{u})\\
            \frac{1}{2}d^{(2)}_{-1,0}(\Bar{u})
        \end{cases}= \begin{cases}
            \frac{b+1-a}{2}d^{(1)}_{a,b}(u)\\
            \frac{b+1-a}{4}d^{(2)}_{a,b}(u),
        \end{cases}
    \end{align}
    and
    \begin{align}\label{pp7}
        &\;\Bar{\delta}_{a,b}(u)\geq \Bar{\delta}_{-1,0}(\Bar{u})\geq \frac{2}{n+3}d^{(3)}_{-1,0}(\Bar{u})\nonumber\\
        \geq&\;\frac{2}{n+3}\left[\begin{aligned}
        \frac{b+1-a}{2}&\int_{\R^n}\frac{\left|u-\lambda e^{-\frac{|x|^{b+1-a}}{b+1-a}}\right|^2}{|x|^{a+b+1}}dx+\int_{\R^n}\frac{\left|u-\lambda e^{-\frac{|x|^{b+1-a}}{b+1-a}}\right|^2}{|x|^{2a}}dx\\
        +&\left(\frac{b+1-a}{2}\right)^2\int_{\R^n}\frac{\left|\nabla \left(u-\lambda e^{-\frac{|x|^{b+1-a}}{b+1-a}}\right)\right|^2}{|x|^{2b}}dx.
    \end{aligned}\right]\nonumber\\
        \geq&\; \frac{2}{n+3}\left(\frac{b+1-a}{2}\right)^2d^{(3)}_{a,b}(u).
    \end{align}
    The proof is complete.
\end{proof}
\begin{remark}
    For the case $b>0$, although the explicit stability constants we find have simple forms, we suspect that they are not sharp. Indeed, from the proof above, we can show that these constants cannot be attained by nontrivial functions. Specifically, if $C_1$ or $C_2$ can be achieved by some nontrivial function $u$, then all inequalities in \eqref{pp6}  must hold with equality. From the first equality, we deduce that both $u$ and $\Bar{u}$ must be radially symmetric. For the second equality, by reviewing the arguments presented in \cite[Theorem 3.2]{Caz1}, we find that the following sharp Poincar\'e inequality
    \begin{align}
        \frac{\lambda^2}{2}\int_{\R^n}\left|\nabla v\right|^2e^{-\frac{|x|^2}{\lambda^2}}dx\geq \inf_{c\in\R}\int_{\R^n}\left|v-c\right|^2e^{-\frac{|x|^2}{\lambda^2}}dx\nonumber
    \end{align}
    must take equality when $v=\Bar{u}e^{\frac{|x|^2}{2\lambda^2}}$ and $\lambda=\left(\frac{\int_{\R^n}|x|^2|\Bar{u}|^2dx}{\int_{\R^n}|\nabla \Bar{u}|^2dx}\right)^\frac{1}{4}$. Since $v$ is radially symmetric, the Gaussian rearrangement inequality and Gaussian P\'olya-Szeg\"o inequality imply that $v$ must be a constant. This leads us to conclude that $u$ is an extremal function for the inequality \eqref{pp1}, which contradicts our assumption that $u$ is nontrivial. As for $C_3$, it is straightforward to see that the inequalities in \eqref{pp7} cannot hold with equality simultaneously unless $u$ is an extremal function for \eqref{pp1}.
\end{remark}

Finally, in the same spirit as \cite[Theorem 1.3]{Caz1}, we can obtain the following two estimates by taking the square of \eqref{pp3}:
\begin{align}
    &\;\int_{\R^n}\frac{|\nabla u|^2}{|x|^{2b}}dx\int_{\R^n}\frac{|u|^2}{|x|^{2a}}dx-\left(\frac{n-a-b-1}{2}\right)^2\left(\int_{\R^n}\frac{|u|^2}{|x|^{a+b+1}}dx\right)^2\nonumber\\
    \geq&\;(n-a-b-1)\frac{b+1-a}{2}\left(\int_{\R^n}\frac{|u|^2}{|x|^{a+b+1}}dx\right)d^{(1)}_{a,b}(u)+\left(\frac{b+1-a}{2}\right)^2d^{(1)}_{a,b}(u)^2\nonumber
\end{align}
and
\begin{align}
    &\;\int_{\R^n}\frac{|\nabla u|^2}{|x|^{2b}}dx\int_{\R^n}\frac{|u|^2}{|x|^{2a}}dx-\left(\frac{n-a-b-1}{2}\right)^2\left(\int_{\R^n}\frac{|u|^2}{|x|^{a+b+1}}dx\right)^2\nonumber\\
    \geq&\;(n-a-b-1)\frac{b+1-a}{4}\left(\int_{\R^n}\frac{|u|^2}{|x|^{a+b+1}}dx\right)d^{(2)}_{a,b}(u)+\left(\frac{b+1-a}{4}\right)^2d^{(2)}_{a,b}(u)^2.\nonumber
\end{align}
Here, $d^{(1)}_{a,b}$ and $d^{(2)}_{a,b}$ are defined as in the proof of Theorem \ref{thm6.1}.

\vspace{30pt}
\addcontentsline{toc}{section}{Declarations
}
\noindent{\Large\textbf{Declarations}}
~\\
~\\
\textbf{Conflict of interest}\quad On behalf of all authors, the corresponding author states that there is no Conflict of interest.~\\
{\textbf{Data Availability Statements}}\quad All data generated or analyzed during this study are included in this article.

\addcontentsline{toc}{section}{References}

\end{document}